\newcommand{\loc}{\textrm{loc}}
\newcommand{\N}{\mathbb{N}}
\newcommand{\R}{\mathbb{R}}
\newcommand{\tot}{\textrm{total}}
\newcommand{\eff}{\text{eff}}
\newcommand{\nh}{\text{nh}}
\renewcommand{\vec}[1]{{\bf #1 } }
\newcommand{\citep}[1]{\cite{#1}}
\newcommand{\sys}[1]{\left \{ \begin{aligned} #1 \end{aligned} \right. }
\newcommand{\saut}{{\color{white} h} \\}
\newcommand{\ini}{\mathrm{in}}
\newcommand{\eq}{\mathrm{eq}}
\newcommand{\nouveau}[1]{{#1}}
\newcommand{\etab}{\underline{\eta}}
\newcommand{\Vb}{\underline{V}}
\newcommand{\Ub}{ { \bf \underline{U}}}
\newcommand{\Pb}{\underline{P}}
\newcommand{\E}{\mathcal{E}}
\newcommand{\Elow}{\mathcal{E}_0}
\newcommand{\rhob}{\underline{\rho}}
\newcommand{\Mb}{\underline{\text{M}}}
\newcommand{\nablamu}{\nabla_{\mu}}
\newcommand{\jac}{J}
\newcommand{\gradphi}[1][]{\nabla_{#1}^{\varphi}}
\newcommand{\eul}[1]{{#1}^{\mathrm{eul}}}
\newcommand{\iso}[1]{{#1}^{\mathrm{iso}}}
\newcommand{\Ral}[2]{R_{#1}^{\mathbb{\Lambda}} (#2)}
\newcommand{\Run}{R_1}
\newcommand{\Rdeux}{R_2}
\newcommand{\Rtrois}{R_3}
\newcommand{\Rquatre}{R_4}
\newcommand{\Rcinq}{R_5}
\newcommand{\Rsix}{R_6}
\newcommand{\NLun}{ \left( \Ub + \epsilon \vec{U} \right) \cdot \nabla^{\varphi} \left( \Ub + \epsilon \vec{U}\right)}
\newcommand{\NLdeux}{\partial_i^{\varphi}\left( \Ub + \epsilon \vec{U}\right)_j \partial_j^{\varphi}\left( \Ub + \epsilon \vec{U}\right)_i}
\newtheorem{theorem}{Theorem}[section]
\newtheorem{lemma}[theorem]{Lemma}
\newtheorem*{lemma*}{Lemma}
\newtheorem{corollary}[theorem]{Corollary}
\newtheorem{proposition}[theorem]{Proposition}
\theoremstyle{definition}
\newtheorem{definition}[theorem]{Definition}
\theoremstyle{remark}
\newtheorem{remark}[theorem]{Remark}
\numberwithin{equation}{section}
\newcounter{hyp}
\newenvironment{hyp}{\begin{equation} \stepcounter{hyp} \tag{H.\thehyp}}{\end{equation}}
   \def\MR#1{}
\begin{document}
\title[The Euler equations in a stably stratified ocean]{Well-posedness of the Euler equations in a stably stratified ocean in isopycnal coordinates}
\date{\today}
\author{Théo Fradin}
\address{Univ. Bordeaux, CNRS, Bordeaux INP, IMB, UMR 5251, F-33400 Talence, France }
\email{theo.fradin@math.u-bordeaux.fr}
\subjclass[2020]{35Q35, 76B03, 76B70, 86A05}
\keywords{Euler equations, stratification, isopycnal coordinates, Alinhac's good unknown}
\maketitle
\begin{abstract}
This article is concerned with the well-posedness of the incompressible Euler equations describing a stably stratified ocean, reformulated in isopycnal coordinates. Our motivation for using this reformulation is twofold: first, its quasi-2D structure renders some parts of the analysis easier. Second, it closes a gap between the analysis performed in the paper by Bianchini and Duchêne in 2022 in isopycnal coordinates, with shear velocity but with a regularizing term, and the analysis performed in the paper by Desjardins, Lannes, Saut in 2020 in Eulerian coordinates, without any regularizing term but without shear velocity. Our main result is a local well-posedness result in Sobolev spaces on the system in isopycnal coordinates, with shear velocity, without any regularizing term. The time of existence that we obtain is uniform with respect to the size $\epsilon$ of the perturbation, and boils down to the large time $1/\epsilon$ with the assumptions of the paper by Desjardins, Lannes, Saut in 2020. With additional assumptions, it is also uniform in the shallow-water parameter. The main difficulty consists in transposing to the isopycnal reformulation the symmetric structure of the system which is more straightforward in Eulerian coordinates. 
\end{abstract}
\section{Introduction}
\subsection{General setting}
The aim of this paper is to study equations describing the evolution of an ocean, that is in our context a strip of water in which we consider density variations. We start from the incompressible Euler equations in Eulerian coordinates, on a time interval $[0,T)$ where $T>0$, on the strip $S_z := \R^d \times [-H,0]$, with $d \in \{1,2\}$ being the horizontal dimension and $H > 0$ the height of the strip. This set of equations on the velocity $\eul{\vec{U}} := \begin{pmatrix} \eul{V} \\ \eul{w}  \end{pmatrix} $ is completed by the mass conservation on the density $\rho : \R^d \times[-H,0] \to [\rho^*,\rho_*]$ with $0 < \rho_* < \rho^*$ two constants. The Euler equations in this setting read
\begin{equation}
\tag{\ensuremath{\eul{E}}}
\label{eqn:euler_euleriennes}
 \sys{\partial_t \eul{\vec{U}} + (\eul{\vec{U}} \cdot \nabla)\eul{\vec{U}} + \frac{1}{\rho} \nabla \eul{P} - \vec{g} & = 0 ,\\
		\partial_t \rho + \eul{\vec{U}} \cdot \nabla \rho &= 0 , & \text{ \ in } [0,T) \times S_z.\\
		\nabla \cdot \eul{\vec{U}} &= 0 ,\\ }
\end{equation}
Here, $\nabla := (\partial_{x_1}, \dots, \partial_{x_d}, \partial_z )^T$ denotes the gradient, $\vec{g} = - g \vec{e_{\nouveau{d+1}}}$ with $g > 0$ is the acceleration of gravity and $\vec{e_{\nouveau{d+1}}}$ is the upward vertical unit vector. We assume a flat bottom and a rigid lid, so that the impermeability conditions yield the first boundary conditions below
\begin{equation}
\label{eqn:euler_euleriennes:bc}
\sys{\eul{w}_{|z=0} &= \eul{w}_{|z=-H} = 0,\\
	\rho_{|z=0} &= \rho_*, \\
	\rho_{|z=-H}  &= \rho^*,
} \text{ \ in } [0,T) \times \R^d;
\end{equation}
the last two boundary conditions state that the density is constant at the flat rigid lid and at the bottom. Note that this condition is propagated by the equations thanks to the transport equation on the density $\rho$ and the fact that the velocity field is tangent \nouveau{to} the boundary. \\
This set of equations is completed by the initial data
\begin{equation}
\label{eqn:euler_euleriennes:ci}
\left\{ \begin{aligned}
\eul{\vec{U}}_{|t=0} &= \eul{\vec{U}}_{\ini},\\
\rho_{|t=0} &= \rho_{\ini}, \end{aligned} \right. \qquad \text{ in } S_z,
\end{equation}
that should satisfy the divergence-free condition in \eqref{eqn:euler_euleriennes} and boundary conditions \eqref{eqn:euler_euleriennes:bc}. The well-posedness of the inhomogeneous (that is, when $\rho$ is not constant) incompressible Euler equations is well known,  see for instance \cite{Danchin2010} for a study in the whole space or \cite{Shigeharu1999} in a more general domain.\\

In this study we assume that the ocean is stratified, meaning that we can find a $C^1$ function $\eta: \R^d \times [0,1] \to [-H,0]$ with $ - \partial_r \eta \geq h_*$ for some $h_* > 0$ and with $\eta_{|r=0}=0$ and $\eta_{|r=1} = -H$ (and as such, $\eta$ is a surjection), such that the density is constant along the level sets of $\eta$ with respect to the third variable, i.e. for any $(t,x,r) \in [0,T] \times \R^d \times [0,1]$,
\begin{equation}
\label{eqn:rho_eta}
 \rho(t,x,\eta(t,x,r)) = \varrho(r),
\end{equation}
where $\varrho:  [0,1] \to [\rho_*, \rho^*]$ is thus the function mapping the level set $r$ to the density of the fluid on this level set, see Figure \ref{fig:eul}. Because of \eqref{eqn:rho_eta}, the assumption that $0 < \rho_* \leq \rho \leq \rho^*$ is now stated as
\begin{hyp}
\label{hyp:non_cavitation}
\rho_* \leq \varrho \leq \rho^*.
\end{hyp}
\begin{figure}
\includegraphics[scale=1]{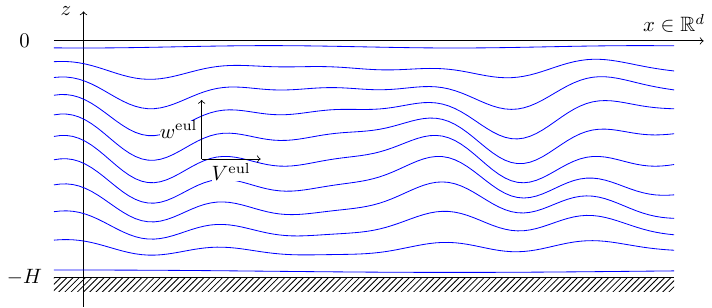}
\caption{Setting in Eulerian coordinates. Isopycnals are sets where the density is constant.}\label{fig:eul}
\end{figure}This yields a diffeomorphism
$$ \varphi: \left| \begin{aligned} [0,T) \times S_{r} & \to [0,T) \times  S_z \\
								(t,x,r) & \mapsto \varphi(t,x,r) := (t,x,\eta(t,x,r)). \end{aligned} \right.$$
Here, $S_{r}$ is the domain $ \R^d \times [0,1]$ of the so-called isopycnal coordinates, see Figure \ref{fig:iso}.\\
\begin{figure}
\includegraphics[scale=1]{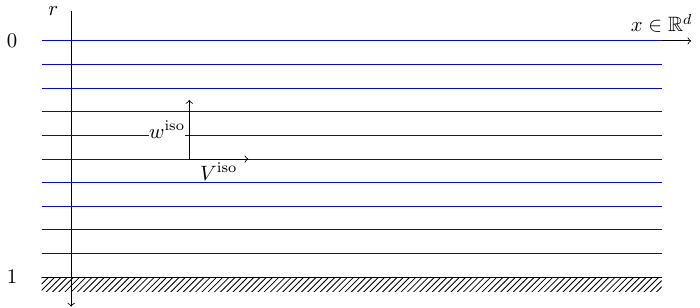}
\caption{Setting in isopycnal coordinates. Isopycnals are flattened.}\label{fig:iso}
\end{figure}Writing the Euler equations in isopycnal coordinates means applying the change of coordinates $\varphi$ to the system \eqref{eqn:euler_euleriennes}. To this end, we define the new set of unknowns 
$$\begin{aligned} \iso{\vec{U}}&:= \eul{\vec{U}}\circ \varphi, & \qquad 
		 \iso{V}&:= \eul{V}\circ \varphi, \\
		  \iso{w}&:= \eul{w}\circ \varphi, & \qquad 
		   \iso{P}&:= \eul{P}\circ \varphi, \\
\end{aligned} 
$$ 
as well as the operators
$$ \begin{aligned}
\partial_t^{\varphi}\iso{f} &:= (\partial_t \eul{f}) \circ \varphi, & \qquad 
\gradphi[x] \iso{f} &:= (\nabla_x \eul{f}) \circ \varphi, \\
\partial_r^{\varphi}\iso{f} &:= (\partial_r \eul{f}) \circ \varphi, & \qquad 
\gradphi &= \begin{pmatrix} \gradphi[x] \\ \partial_r^{\varphi} \end{pmatrix},
\end{aligned}$$
for $\eul{f}$ an unknown in Eulerian coordinates and $\iso{f} := \eul{f} \circ \varphi $ its expression in isopycnal coordinates. We thus obtain the first reformulation of the momentum equation and divergence-free condition in isopycnal coordinates 

\begin{equation*}
\sys{ \partial_t^{\varphi} \iso{\vec{U}} + (\iso{\vec{U}} \cdot \gradphi)\iso{\vec{U}} + \frac{1}{\varrho} \gradphi \iso{P} - \vec{g} & = 0, \\
		\gradphi \cdot \iso{\vec{U}} &= 0, \\ } \qquad  \text{ \ in } [0,T) \times S_r.
\end{equation*}
This set of equations is completed with the boundary conditions
\begin{equation}
\label{eqn:euler:bc}
\sys{\iso{w}_{|r = 0} &= \iso{w}_{|r = 1} = 0 \\ 
	\eta_{|r=0} &= 0,\\
	\eta_{|r=1} &= -H,}\text{ \ in } [0,T) \times \R^d,
\end{equation}
where the conditions on $\eta$ come from its definition. They imply the conditions on $\rho$ in \eqref{eqn:euler_euleriennes:bc} as they state that the bottom and top boundaries are level-lines of the density. This system is completed with the initial data
\begin{equation*}
\sys{		\iso{\vec{U}}_{|t=0} &= \iso{\vec{U}}_{\ini}, \\
		\eta_{|t=0} &= \eta_{\ini},} \qquad \text{ \ in } S_r,
\end{equation*}
that should satisfy \eqref{eqn:euler:bc} and the divergence-free condition in isopycnal coordinates. Using the chain rule, we can compute
$$\begin{aligned} \gradphi[t,x] \iso{f} &= \nabla_{t,x} \iso{f} - \frac{\nabla_{t,x} \eta}{\partial_r \eta} \partial_r \iso{f}, \\
\partial_r^{\varphi} \iso{f} &= \frac{1}{\partial_r \eta} \partial_r \iso{f}, \end{aligned} $$
for any function $\iso{f}$ defined in isopycnal coordinates. \\
Thus, we need to complement the momentum equation in isopycnal coordinates with an equation on $\eta$. This is done by differentiating \eqref{eqn:rho_eta} with respect to $x$ and $t$, namely
$$ \sys{ \nabla_x \rho(t,x,\eta(t,x,r)) + \nabla_x \eta\nouveau{(t,x,r)} \partial_z \rho(t,x,\eta(t,x,r)) = 0 , \\
	\partial_t \rho(t,x,\eta(t,x,r)) + \partial_t  \eta\nouveau{(t,x,r)} \partial_z \rho(t,x,\eta(t,x,r)) = 0 .}$$
	Using the mass conservation equation in \eqref{eqn:euler_euleriennes}, this yields
	\begin{equation}
	\label{eqn:euler_isopycnales:eta}
	\left(\partial_t \eta + V^{\nouveau{\mathrm{iso}}} \cdot \nabla_x \eta - w^{\nouveau{\mathrm{iso}}}\right)\nouveau{\circ \varphi^{-1}}\partial_z \rho = 0.
	\end{equation}
Now restricting to the case of a stable stratification, that is, there exists $c > 0$ such that $-\partial_z \rho \geq c > 0$, the above equation \eqref{eqn:euler_isopycnales:eta} yields an evolution equation on $\eta$.
Note that differentiating \eqref{eqn:rho_eta} with respect to $r$ yields
$$  \partial_r \eta  \partial_z \rho = \varrho'.$$
Thus, as $- \partial_r \eta \geq h_* > 0$, the stable stratification assumption is equivalent to the existence of $c_* > 0$ such that $\varrho' \geq c_*$.\\
Using \eqref{eqn:euler_isopycnales:eta}, we can compute 
\begin{equation}
\label{eqn:slag}
\partial^{\varphi}_t + \iso{\vec{U}} \cdot \gradphi = \partial_t + \iso{V} \cdot \nabla_x \ .
\end{equation}
We can now write the system in isopycnal coordinates according to this remark, namely
\begin{equation}
\tag{\ensuremath{\iso{E}}}
\label{eqn:euler_isopycnal}
\sys{ \partial_t \iso{\vec{U}} + (\iso{V} \cdot \nabla_x) \iso{\vec{U}} + \frac{1}{\varrho} \gradphi \iso{P} - \vec{g} & = 0, \\
		\partial_t \eta + \iso{V} \cdot \nabla_x \eta  - \iso{w} &= 0, & \qquad \text{ in } [0,T) \times S_r.\\
		\gradphi \cdot \iso{\vec{U}} &= 0, \\}
\end{equation}
It is important to notice that there is no vertical advection in the formulation  \eqref{eqn:euler_isopycnal} of the system in isopycnal coordinates, as opposed to the formulation in Eulerian coordinates. For this reason, \eqref{eqn:euler_isopycnal} is sometimes said to be quasi two-dimensional \cite{Vallis2017} or semi-Lagrangian \cite{Duchene2022}. In other words, the flow is largely along the isopycnals \cite{Vallis2017}, hence the study of large-scale flows in the ocean is sometimes performed in the isopycnal coordinates \cite{Griffies2000}. From a technical viewpoint, we crucially use this semi-Lagrangian property in this study for the construction of a solution from a regularized system, which is less obvious in Eulerian coordinates, see Remark \ref{rk:slag_existence}.  \\ 
\subsection{Ansatz}
The system in Eulerian coordinates \eqref{eqn:euler_euleriennes} admits equilibria called shear flows, namely of the form
$$(\eul{V}, \eul{w}, \rho)(t,x,z) := (\eul{V}_{\eq}, 0, \rho_{\eq})(z).$$
The pressure $\eul{P}_{\eq}$ is then given by the equation on $w$ in \eqref{eqn:euler_euleriennes}, and is called the hydrostatic pressure
$$ \eul{P}_{\eq}(z) := \int_z^0 g \rho_{\eq}(z) dz,$$
where we assumed without loss of generality that the pressure at equilibrium at the rigid lid is zero.
In Eulerian coordinates, we make the natural ansatz of studying a perturbation of size $\epsilon \in [0,1]$ of a reference solution that is an equilibrium:   
$$ (\eul{ \Vb}(z), 0, \eul{ \Pb}(z),  \rhob(z)) := (\eul{V}_{\eq}(z), 0, \eul{P}_{\eq}(z), \rho_{\eq}(z)).$$
Namely, we look for solutions under the form
$$(\eul{V}_{\tot}, \eul{w}_{\tot}, \eul{P}_{\tot}, \rho_{\tot}) := (\eul{\Vb}, 0, \eul{\Pb}, \rhob) + \epsilon (\eul{\tilde{V}}, \eul{\tilde{w}}, \eul{\tilde{P}}, \tilde{\rho}).$$
The system \eqref{eqn:euler_euleriennes} then writes, in $[0,T) \times S_z$:
\begin{equation}
\label{eqn:euler_euleriennes:pert}
 \sys{\partial_t \eul{\tilde{V}} + (\eul{\Vb} + \epsilon \eul{\tilde{V}}) \cdot \nabla_x\eul{\tilde{V}} + \eul{\tilde{w}} \partial_z  \left( \eul{\Vb} + \epsilon \eul{\tilde{V}}\right)+  \frac{1}{\rhob + \epsilon \tilde{\rho}} \nabla_x \eul{\tilde{P}} & = 0, \\
\partial_t \eul{\tilde{w}} +  (\eul{\Vb} + \epsilon \eul{\tilde{V}}) \cdot \nabla_x\eul{\tilde{w}} + \epsilon \eul{\tilde{w}} \partial_z \eul{\tilde{w}}  +  \frac{1}{\rhob + \epsilon \tilde{\rho}} \partial_z \eul{\tilde{P}}  + \frac{\nouveau{g}\tilde{\rho}}{\rhob + \epsilon \tilde{\rho}} & = 0, \\
		\partial_t \tilde{\rho} + (\eul{\Vb} + \epsilon \eul{\tilde{V}})\cdot \nabla_x \tilde{\rho}  + \eul{\tilde{w}} \partial_z(\rhob + \epsilon \tilde{\rho}) &= 0, \\
		\nabla_x \cdot \eul{\tilde{V}} + \partial_z \eul{\tilde{w}} &= 0. \\}
\end{equation}
In \cite{Desjardins2019}, the authors study the system \eqref{eqn:euler_euleriennes:pert} in this particular setting, with the assumption of stable stratification, i.e. that the density is strictly decreasing with respect to the vertical coordinate. Under several technical assumptions, one of which being the absence of background shear-flow, they prove the well-posedness of the system \eqref{eqn:euler_euleriennes:pert}, in Sobolev spaces of finite regularity, on a time interval $[0,T)$ with $T$ of order $O(\frac{1}{\epsilon})$ (see \cite[Theorem 2]{Desjardins2019}). Regarding this issue, Theorem \ref{thm:wp} is an extension of \cite[Theorem 2]{Desjardins2019} to the case of a non-zero shear flow. Note however that Theorem \ref{thm:wp} yields a shorter time of existence than \cite[Theorem 2]{Desjardins2019}, see Remark \ref{rk:short_time}.\\

Let us now turn to the reformulation in isopycnal coordinates.  
The system in isopycnal coordinates \eqref{eqn:euler_isopycnal} also admits equilibrium solutions $(\iso{V}_{\eq}, 0,\iso{P}_{\eq}, \eta_{\eq})$ that only depend on the vertical variable $r$ and satisfying the hydrostatic balance
$$ \partial_r^{\varphi_{\eq}} \iso{P}_{\eq} = - g \varrho, $$
where $\varphi_{\eq}(t,x,r) := (t,x,\eta_{\eq}(r))$. We make the particular choice $\eta_{\eq}(r) = -rH$. Indeed, if $\rho_{\eq}$ is the density at equilibrium, we can write, for $r \in [0,1]$,
\begin{equation}
\label{eqn:intro:temp1}
\rho_{\eq}(\eta_{\eq}(r)) = \rho_{\eq}(-rH) =: \varrho(r),
\end{equation}
so that the change of coordinates $\varphi_{\eq}(t,x,r) = (t,x,\eta_{\eq}(r))$ does verify
$$\begin{aligned}
\nouveau{\eta_{\eq}(0)} &\nouveau{= 0}, \qquad & \nouveau{\eta_{\eq}(1) = - H},\\
h_{\eq}(r) &:= -\partial_r \eta_{\eq}(r) = H > 0.& 
\end{aligned}
$$
Note also that any other choice of $\eta_{\eq}$ would lead to a different function $\varrho$ through the relation \eqref{eqn:intro:temp1}, ultimately describing the same physical system, so that our choice for $\eta_{\eq}$ is not restrictive. We thus have
$$ (\iso{V}_{\eq}, \iso{P}_{\eq})(r) = (\eul{V}_{\eq}, \eul{P}_{\eq})(-rH).$$
In \cite{Duchene2022}, the authors study a perturbation of such an equilibrium. Let us make a slightly different ansatz, and look for solutions $(\iso{V}_{\tot}, \iso{w}_{\tot}, \iso{P}_{\tot}, \eta_{\tot})$ of the form 
$$ (\iso{V}_{\tot}, \iso{w}_{\tot}, \iso{P}_{\tot}, \eta_{\tot}) := (\iso{\Vb} + \epsilon \iso{\tilde{V}}, \epsilon \iso{\tilde{w}}, \iso{\Pb} + \epsilon \iso{\tilde{P}}, \etab + \epsilon \tilde{\eta}),$$
where $\epsilon \in [0,1]$ and 
$$\sys{
 \etab(r) &:= \eta_{\eq}(r) = -rH, \\
\iso{\Vb}(r) &:= \iso{V}_{\eq}(r), \\
\iso{ \Pb}(t,x,r) &:= \iso{P}_{\eq}\left(- \frac{1}{H} \left( \etab(r)  + \epsilon \tilde{\eta}(t,x,r) \right) \right). }$$
We can thus write the change of coordinates as
$$ \varphi(t,x,r) := (t,x,\etab(r) + \epsilon \tilde{\eta}(t,x,r)). $$
Remark that the ansatz on the pressure is such that 
$$\sys{\gradphi[x] \iso{\Pb} &= \left( \nabla_x \eul{P}_{\eq}\right)\circ (\etab  \nouveau{+} \epsilon \tilde{\eta}) = 0,\\
\nouveau{\partial^{\varphi}_r \iso{\Pb}} &= \nouveau{\left( \partial_z \eul{P}_{\eq} \right)\circ ( \etab + \epsilon \tilde{\eta}) = -g \rhob(\etab + \epsilon \eta)},
}
$$
from the definition of $\eul{P}_{\eq}$. These properties will simplify the analysis later on. Note that a Taylor expansion on $\iso{P}_{\eq}$ yields
$$ \iso{\Pb}(t,x,r) = \iso{P}_{\eq}(r) + O(\epsilon),$$
so that this ansatz and the one used in \cite{Duchene2022} consisting in studying directly a perturbation of the equilibrium correspond up to terms of order $\epsilon$. \\
Using the notation
\begin{equation*}
b := \frac{1}{\epsilon} \left( 1 - \frac{\nouveau{\rhob}\circ(\nouveau{\etab} + \epsilon \tilde{\eta})}{\nouveau{\rhob}\circ \nouveau{\etab}} \right) g
\end{equation*}
for the buoyancy term, this yields the system
\begin{equation}
\label{eqn:euler_slag_dim}
\sys{ \partial_t \iso{\tilde{V}} + (\iso{\Vb} + \epsilon \iso{\tilde{V}}) \cdot \nabla_x \iso{\tilde{V}} + \frac{1}{\varrho} \gradphi[x] \iso{\tilde{P}}  & = 0, \\
\left(\partial_t \iso{\tilde{w}} + (\iso{\Vb} + \epsilon \iso{\tilde{V}}) \cdot \nabla_x \iso{\tilde{w}} \right) + \frac{1}{\varrho} \partial_r^{\varphi} \iso{\tilde{P}}  + b  & = 0, \\
		\partial_t \tilde{\eta} + (\iso{\Vb} + \epsilon \iso{\tilde{V}}) \cdot \nabla_x \eta  - \iso{\tilde{w}} &= 0, \\
		\nabla_x^{\varphi} \cdot \iso{\tilde{V}}  + \frac{1}{\epsilon} \gradphi[x] \cdot \iso{\Vb} + \partial_r^{\varphi}\iso{\tilde{w}} &= 0, \\}  \qquad \text{ in } [0,T) \times S_r,
\end{equation}
with the boundary conditions
\begin{equation}
\sys{
		\iso{\tilde{w}}_{|r = 0} &= \iso{\tilde{w}}_{|r=1} = 0,\\
		\tilde{\eta}_{|r=0} &= \tilde{\eta}_{|r=1} = 0,} \qquad \text{ in } [0,T) \times \R^d.
\end{equation}
Note that the conditions on $\tilde{\eta}$ are such that $\etab + \epsilon \tilde{\eta}$ satisfy the conditions \eqref{eqn:euler:bc}. This system is completed with the initial conditions
\begin{equation}
\label{eqn:euler_slag_dim:ci}
\sys{	\iso{\tilde{V}}_{|t=0} &= \iso{\tilde{V}}_{\ini},\\
		\iso{\tilde{w}}_{|t=0} &= \iso{\tilde{w}}_{\ini}, & \qquad \text{ in } S_r.\\
		\tilde{\eta}_{|t=0} &= \tilde{\eta}_{\ini}, }
\end{equation}
Remark that a Taylor expansion of $\rhob$ around $\etab(r)$ yields the approximate expression
$$ b(t,x,r) = \nouveau{\frac{g}{H}}\frac{\varrho'(r)}{\varrho(r)} \tilde{\eta}(t,x,r) + O(\epsilon).$$
Recall indeed that $\rho_{\eq}(\eta_{\eq}(r)) = \varrho(r)$ by definition. This indicates that $b$ is of order $1$, and not of order $1/\epsilon$. It also makes appear the quantity 
\begin{equation}
\label{eqn:N2}
N^2(r) := \frac{\varrho'(r)}{\varrho(r)},
\end{equation}
which is the Brunt-Väisälä frequency. This quantity plays an important role in the dynamics, see for instance \cite{Vallis2017}, or \cite{Desjardins2019} for its role in the coupling of internal waves. This argument is made rigorous in Lemma \ref{lemma:NL} thanks to composition estimates. \\

A variation of the system \eqref{eqn:euler_slag_dim} has already been studied in \cite{Duchene2022}, though with the more natural ansatz of considering directly a perturbation of an equilibrium and in the free-surface setting. More importantly, the authors consider a diffusion term stemming from mesoscale turbulence introduced in \cite{Gent1990}, which regularizes the system. One striking fact is that they crucially use the regularizing properties of the diffusion term to prove the well-posedness of the system \eqref{eqn:euler_slag_dim}, whereas the analogous system \eqref{eqn:euler_euleriennes:pert} in Eulerian coordinates has been proved to be well-posed without this term in \cite{Desjardins2019}. \\
This discrepancy motivates the present investigation of the well-posedness of this system in isopycnal coordinates without the regularization used in \cite{Duchene2022}. Regarding this issue, Theorem \ref{thm:wp} is an extension of the existence result in \cite[Theorem \nouveau{2}.2]{Duchene2022}. 
\subsection{Non-dimensionalization}
We now turn to the non-dimensionalization of the equations, first in Eulerian coordinates. We write $\tilde{x} = \frac{x}{L}$, $\tilde{t} = \frac{\sqrt{gH}}{L}t$, $ \tilde{z} = \frac{z}{H}$, where $L$ and $H$ are characteristic horizontal and vertical scales, respectively.
We write $\mu := \frac{H^2}{L^2}$ for the shallowness parameter. In Eulerian coordinates, the system \eqref{eqn:euler_euleriennes:pert} becomes
\begin{equation}
\label{eqn:euler_euleriennes:mu}
 \sys{\partial_t \eul{\tilde{V}} + (\eul{\Vb} + \epsilon \eul{\tilde{V}}) \cdot \nabla_x\eul{\tilde{V}} + \eul{\tilde{w}} \partial_z  \left( \eul{\Vb} + \epsilon \eul{\tilde{V}}\right)+  \frac{1}{\rhob + \epsilon \tilde{\rho}} \nabla_x \eul{\tilde{P}} & = 0, \\
 \mu \left(\partial_t \eul{\tilde{w}} +  (\eul{\Vb} + \epsilon \eul{\tilde{V}}) \cdot \nabla_x\eul{\tilde{w}} + \epsilon \eul{\tilde{w}} \partial_z \eul{\tilde{w}} \right)  +  \frac{1}{\rhob + \epsilon \tilde{\rho}} \partial_z \eul{\tilde{P}}  + \frac{\tilde{\rho}}{\rhob + \epsilon \tilde{\rho}} & = 0, \\
		\partial_t \tilde{\rho} + (\eul{\Vb} + \epsilon \eul{\tilde{V}})\cdot \nabla_x \tilde{\rho}  + \eul{\tilde{w}} \partial_z(\rhob + \epsilon \tilde{\rho}) &= 0, \\
		\nabla_x \cdot \eul{\tilde{V}} + \partial_z \eul{\tilde{w}} &= 0. \\}
\end{equation}
We are interested in the regime $\mu << 1$, as for oceanic applications one can have $\mu \sim 10^{-6}$ (see \cite{Vallis2017}). Therefore, it is natural to look for a time of existence at least uniform with respect to $\mu$. The hydrostatic equations (that is, setting $\mu = 0$ in \eqref{eqn:euler_euleriennes:mu}) are very singular: in the homogeneous case (that is, $\rhob$ being constant and $\tilde{\rho}$ being zero in \eqref{eqn:euler_euleriennes:mu}), ill-posedness of the initial value problem of the linearized system (that is, setting $\epsilon = 0$ in \eqref{eqn:euler_euleriennes:mu}) was proved by Renardy in \cite{Renardy2009}. However, Masmoudi and Wong showed in \cite{Masmoudi2012} existence and uniqueness of solutions for the two-dimensional, homogeneous case, under the Rayleigh criterion
$$ \partial_z^2 \left( \Vb + \epsilon \eul{\tilde{V}} \right) \geq c_* > 0.$$
In the case of a stable stratification, the Miles and Howard criterion 
\begin{equation} 
\label{eqn:mh} \inf_{z \in [-H,0]} g \frac{-\rho'_{\eq}(z)}{\rho_{\eq}(z)|V'_{\eq}(z)|^2} \geq \frac14 
\end{equation}
(see \cite{Miles1961,Howard1961}) gives a sufficient condition to prevent unstable modes for the linearized equations around an equilibrium shear flow $(\rho_{\eq}(z), V_{\eq}(z))$. However, this is not enough to conclude well-posedness of the hydrostatic equations, which is still an open problem. Nonetheless, in \cite{Bianchini2024}, the authors found a stratified shear flow around which the hydrostatic system is ill-posed. \\

In \cite{Desjardins2019}, using the stable stratification assumption, but also restricting to the case of uniform stratification (that is, $\rho_{\eq}$ is an exponential), using the Boussinesq approximation (that is, the density is assumed constant in the momentum equations, except in the buoyancy term $b$), with well-prepared initial data and no shear flow, the authors prove the existence and stability of solutions on a time interval $[0,T)$ with $T$ of order $\frac{\sqrt{\mu}}{\epsilon}$. In Theorem \ref{thm:wp} below, we state a similar result in the case of a non-zero shear flow, without those additional assumptions. However our analysis provides a time of existence uniform in $\epsilon$ and $\mu$. A more thorough comparison with their result is provided in Remark \ref{rk:short_time}.   \\

In isopycnal coordinates, equations \eqref{eqn:euler_slag_dim} become, in non-dimensional form and {\it dropping the tildes and the superscripts `iso' for readability }
\begin{equation}
\tag{$E$}
\label{eqn:euler_slag}
\sys{ \partial_t V + (\Vb + \epsilon V) \cdot \nabla_x V + \frac{1}{\varrho} \gradphi[x] P  & = 0, \\
\mu \left(\partial_t w + (\Vb + \epsilon V) \cdot \nabla_x w \right) + \frac{1}{\varrho} \partial_r^{\varphi} P  + b  & = 0, \\
		\partial_t \eta + (\Vb + \epsilon V) \cdot \nabla_x \eta  - w &= 0, \\
		\nabla_x^{\varphi} \cdot V  + \frac{1}{\epsilon} \gradphi[x] \cdot \Vb + \partial_r^{\varphi}w &= 0, } \qquad \text{ in } [0,T) \times S_r.
\end{equation}
Here, the buoyancy term $b$ reads, in non-dimensional form:
\begin{equation}
\label{eqn:def:b}
b := \frac{1}{\epsilon} \left( 1 - \frac{\nouveau{\rhob}(\nouveau{\etab} + \epsilon \eta)}{\nouveau{\rhob}(\nouveau{\etab})} \right). 
\end{equation}
The system \eqref{eqn:euler_slag} is completed with the initial conditions (stemming from the non-dimensionalization of) \eqref{eqn:euler_slag_dim:ci} as well as the boundary conditions 
\begin{equation}
\label{eqn:euler_slag:bc}
\sys{ w_{|r=0} &= w_{|r=1} = 0,\\
		\eta_{|r=0}&=\eta_{|r=1}=0,}
\end{equation}
where the first two identities in \eqref{eqn:euler_slag:bc} are the impermeability conditions of the top and bottom boundary of the fluid domain, and the last two identities in \eqref{eqn:euler_slag:bc} denote the assumption that the density at the top and bottom boundaries is constant.\\
Note that the existence result in \cite[Theorem \nouveau{2}.2]{Duchene2022} gives a time of existence uniform in the shallow-water parameter $\mu$, \nouveau{in the free-surface setting}, though by using a regularizing term.

\subsection{Main result}
We now state our main result, that is the well-posedness of the system \eqref{eqn:euler_slag} in Sobolev spaces under the assumption of stable stratification, with a time of existence uniform in $\epsilon$. Assuming further a medium amplitude asymptotic regime ($\epsilon \leq \sqrt{\mu}$) and small shear velocity $\Vb'$, it is also uniform in $\mu$. \\
Let us state the classical assumptions used in Theorem \ref{thm:wp}. Let $ s \in \N$. First, let $\Mb > 0$. We assume that $\Vb, \varrho \in \nouveau{W^{s+1,\infty}([0,1])}$, with 
\begin{hyp}
\label{hyp:Mb}
| \Vb |_{W^{s+1,\infty}} + |\varrho|_{W^{s+1,\infty}} \leq \Mb,
\end{hyp}as well as the strict stability assumption, that is there exists $c_* > 0$ such that
\begin{hyp}
\label{hyp:stable}
\varrho' \geq c_* .
\end{hyp}Second, let $M_{\ini} > 0$. If $(V_{\ini},w_{\ini},\eta_{\ini}) \in H^{s}(S_r)^{d+2}$ correspond to initial data, then we assume
\begin{hyp}
\label{hyp:Min}
 \Vert V_{\ini} \Vert_{H^{s}}^2 + \mu \Vert w_{\ini} \Vert_{H^{s}}^2 + \Vert \eta_{\ini} \Vert_{H^{s}}^2 \leq M_{\ini}.
\end{hyp}Moreover, the initial data satisfy the boundary condition and divergence-free condition 
\begin{hyp}
\label{hyp:wp}
\begin{aligned} & \left. w_{\ini} \right|_{r=0} = \left. w_{\ini} \right|_{r=1} = 0, \\
&\left. \eta_{\ini} \right|_{r=0} = \left. \eta_{\ini} \right|_{r=1} = 0, \\
					& \nabla_x^{\varphi_{\ini}} \cdot \left( \Vb + \epsilon V_{\ini}\right)  + \epsilon \partial_r^{\varphi_{\ini}} w_{\ini} = 0, \end{aligned} 
\end{hyp}with the notation
	$$\begin{aligned}
		\nabla_x^{\varphi_{\ini}} &:= \nabla_x + \epsilon \frac{\nabla_x \eta_{\ini}}{1+ \epsilon h_{\ini}} \partial_r, \\
		\partial_r^{\varphi_{\ini}} &:= \frac{-1}{1+\epsilon h_{\ini}} \partial_r ,
	\end{aligned} $$
where 
$$h_{\ini} := - \partial_r \eta_{\ini}.$$
We assume that there exist $h^* \geq h_* > 0$ such that 
\begin{hyp}
\label{hyp:h}
h_* \leq 1+\epsilon h_{\ini} \leq h^*.
\end{hyp}
\begin{theorem}  
\label{thm:wp}
Let $d \in \N^*$, $s_0>\frac{d}{2}$, $s \in \N$ with $s \geq s_0+\frac52$. Let $\epsilon \in [0,1]$, $\mu \in ]0,1]$. Let $\Vb, \varrho \in W^{s+1,\infty}([0,1])$ , $(V_{\ini},w_{\ini},\eta_{\ini}) \in H^{s}(S_r)^{d+2}$ such that assumptions \eqref{hyp:non_cavitation}, \eqref{hyp:Mb}, \eqref{hyp:stable}, \eqref{hyp:Min},  \eqref{hyp:wp}, \eqref{hyp:h} are satisfied. Assume further that the shear flow is small with respect to $\mu$ and that we are in the weakly non-linear regime
$$\begin{aligned} |\Vb'|_{L^\infty} &\leq \sqrt{\mu}, \\
					 \epsilon &\leq \sqrt{\mu}. \end{aligned} $$
Then there exists $T > 0$ independent of $\epsilon,\mu$ such that there exists a unique solution $(V,w,\eta) \in C^0([0,T),H^{s}(S_r)^{d+2})$ to \eqref{eqn:euler_slag} with the initial data $(V_{\ini},w_{\ini},\eta_{\ini})$ and boundary conditions \eqref{eqn:euler_slag:bc}.
\end{theorem}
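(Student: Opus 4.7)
The plan is to follow the standard scheme for quasilinear hyperbolic systems: construct a family of approximate solutions via a suitable regularization, derive a priori energy estimates uniform in the regularization parameter (and in $\epsilon,\mu$), pass to the limit by weak-star compactness, and close uniqueness by a low-regularity energy argument on the difference of two solutions. Propagation of regularity from $L^{\infty}_t H^s$ to $\mathcal{C}_t H^s$ follows by standard arguments.

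For the construction step, I would add a small mesoscale-turbulence viscosity $\delta > 0$ in the spirit of \cite{Duchene2022}, or equivalently mollify the transport terms by a Friedrichs operator. The semi-Lagrangian identity \eqref{eqn:slag} is the structural key here: the material derivative $\partial_t + (\Vb + \epsilon V)\cdot\nabla_x$ involves no vertical advection, so a Picard-type iteration can solve first for $\eta$ (hence $\varphi$), then recover $P$ from an elliptic problem obtained by applying $\gradphi\cdot$ to the momentum equation (its coercivity being ensured by $\varrho > 0$ and \eqref{hyp:h}), and finally update $V$ and $w$, producing smooth solutions on some interval $[0,T_{\delta})$.

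The heart of the argument is the derivation of uniform a priori estimates for an energy functional of the form
$$ \E_s := \sum_{|\alpha| \leq s} \int_{S_r} \Bigl( \varrho\, |V^{(\alpha)}|^2 + \mu\, \varrho\, |w^{(\alpha)}|^2 + \varrho'\, |\eta^{(\alpha)}|^2 \Bigr)\, dx\, dr, $$
where $V^{(\alpha)}, w^{(\alpha)}, \eta^{(\alpha)}$ are Alinhac good unknowns adapted to the change of variable $\varphi$, of the form $V^{(\alpha)} := \partial^{\alpha} V - (\partial^{\alpha} \eta)\, \partial_r^{\varphi} V$ and analogously for $w$. The weight $\varrho'$ in the $\eta$-contribution renders $\E_s$ coercive under the stable stratification \eqref{hyp:stable}, while the factor $\mu$ in front of $w^{(\alpha)}$ reflects the $\mu$-scaling of the momentum equation on $w$. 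Using these good unknowns, differentiating \eqref{eqn:euler_slag} at order $|\alpha| = s$ produces a system on $(V^{(\alpha)}, w^{(\alpha)}, \eta^{(\alpha)})$ whose principal part mirrors the linearized system, while the commutators with the change of variable $\varphi$ are lower-order and controlled by $\E_s$ itself.

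The main obstacle, and the novelty relative to \cite{Duchene2022,Desjardins2019}, is to exhibit a symmetrizer for this higher-order system in isopycnal coordinates in the presence of a non-trivial shear $\Vb$. Two delicate points arise. First, the term $\frac{1}{\epsilon}\gradphi[x]\cdot\Vb$ in the incompressibility constraint is a priori singular as $\epsilon \to 0$, and one must show that its contributions are exactly compensated when pairing the pressure gradient against $\gradphi\cdot V^{(\alpha)}$; this cancellation relies on the specific ansatz on $\iso{\Pb}$, which yields $\gradphi[x] \iso{\Pb} = 0$. Second, the coupling between $w$ and $V$ through $\Vb'$ produces a priori $\mu^{-1/2}$-singular contributions that can be absorbed only via Young's inequality against the $\mu$-weighted $w^{(\alpha)}$-piece of $\E_s$, which is why the assumption $|\Vb'|_{L^{\infty}} \leq \sqrt{\mu}$ is precisely what is needed for $\mu$-uniformity. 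Composition estimates in Sobolev spaces (for the buoyancy $b$ and for quantities built from $\varphi$) handle the remaining nonlinear remainders, yielding a Grönwall inequality $\frac{d}{dt}\E_s \leq F(\E_s)$ with $F$ independent of $\epsilon,\mu,\delta$, hence a time of existence $T$ depending only on $\Mb$ and $M_{\ini}$. Passing to the limit $\delta \to 0$ and running the analogous $L^2$-estimate on the difference of two solutions (using the $H^{s_0+\frac{5}{2}}$ regularity as a Lipschitz bound) closes the proof.
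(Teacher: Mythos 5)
Your overall strategy --- a mollification exploiting the quasi-2D (semi-Lagrangian) structure, uniform a priori estimates for an energy built on Alinhac's good unknowns with the $\varrho'$-weighted $\eta$-component and the $\mu$-weighted $w$-component, compactness, and an $L^2$ estimate on differences for uniqueness --- is the same as the paper's, and you correctly identify where $\epsilon\leq\sqrt{\mu}$ and $|\Vb'|_{L^\infty}\leq\sqrt{\mu}$ enter. However, the treatment of the pressure, which is where the actual difficulty of the theorem lies, is missing in two places, and your assertion that ``the commutators with the change of variable $\varphi$ are lower-order and controlled by $\E_s$'' is precisely what fails without the additional work. First, the elliptic problem \eqref{eqn:elliptic} defining $P$ has coefficients built from $\nabla\eta$, so the standard elliptic estimate only controls $\Vert\gradphi[\mu]P\Vert_{H^{s}}$ by $\Vert\eta\Vert_{H^{s+1}}$ (Proposition \ref{lemma:elliptic_low_reg}): a loss of one derivative that cannot be absorbed by Gr\"onwall. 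One must introduce the good unknown $P^{\mathbb{\Lambda}}$ for the pressure itself, derive the elliptic equation \eqref{eqn:elliptic:alinhac} it satisfies, and prove the loss-free estimate of Proposition \ref{lemma:elliptic_high_reg} by a double induction on $s$ and on the number of vertical derivatives. Your sketch applies good unknowns only to $(V,w)$ and treats the pressure as a routine coercive elliptic solve.

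Second, in the energy estimate the pressure contribution $\int_{S_r}\nabla^{\varphi}P^{\mathbb{\Lambda}}\cdot U^{\mathbb{\Lambda}}(1+\epsilon h)$ can be integrated by parts against the incompressibility constraint only when $\mathbb{\Lambda}=|D|\Lambda^{s-1}$ is purely tangential, because only then does $w^{\mathbb{\Lambda}}$ have zero trace at $r=0,1$. For $\mathbb{\Lambda}=\Lambda^{s-k}\partial_r^{k}$ with $k\geq1$ the boundary terms do not vanish --- this is exactly why \cite{Desjardins2019} must assume well-prepared data --- and the paper instead uses the incompressibility constraint to express $w^{\mathbb{\Lambda}}$ through $V$ and $\eta$ and estimates the pressure term directly by Cauchy--Schwarz, together with a $\mu$-uniform bound on $\gradphi[x]P^{\mathbb{\Lambda}}$. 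Your single mechanism of ``pairing the pressure gradient against $\gradphi\cdot V^{(\alpha)}$'' does not cover this case. A smaller point: the removal of the $\frac{1}{\epsilon}$-singularity is not an exact compensation coming from $\gradphi[x]\iso{\Pb}=0$; it comes from the identity $\Vb^{\mathbb{\Lambda}}=\epsilon\frac{\mathbb{\Lambda}\eta}{1+\epsilon h}\Vb'$, whose explicit factor $\epsilon$ cancels the $\frac{1}{\epsilon}$, leaving a term of size $\frac{|\Vb'|_{L^\infty}}{\sqrt{\mu}}\E$ which is admissible only thanks to the smallness assumption on the shear.
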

\begin{remark}
More precisely, the time of existence $T$ in Theorem \ref{thm:wp} admits a lower bound that depends only on $M_{\ini}$, $\Mb$, $c_*$, $h_*$, $h^*$, $d$, $s_0$, $s$, $\rho_*$, $\rho^*$.
\end{remark}
\begin{remark}
This result is to be compared with \cite[Theorem \nouveau{2}.2]{Duchene2022}, where the authors prove in particular the well-posedness of a system similar to \eqref{eqn:euler_slag}, but with a regularizing term and a slightly different ansatz. Here the use of Alinhac's good unknown, presented in Appendix \ref{appendixB}, allows us to study the system \eqref{eqn:euler_slag} without this regularizing term. The ansatz that we use here allows us to keep the symmetry of the system \eqref{eqn:euler_euleriennes:pert} (see the term $(iii)$ in the proof of Proposition \ref{lemma:energy}).\\ 
\end{remark}
\begin{remark}
\label{rk:short_time}
Our result is to be compared with \cite[Theorem 2]{Desjardins2019}, where the well-posedness is established in Eulerian coordinates without any regularizing term, in the case of no background current $\Vb$. Our analysis yields a more precise time of existence of the solution than the one stated in Theorem \ref{thm:wp}, namely
\begin{equation}
\label{eqn:time}
T = O\left(\left(1 +\frac{\epsilon}{\sqrt{\mu}} + \frac{|\Vb'|_{L^{\infty}}}{\sqrt{\mu}}\right)^{-1} \right).
\end{equation}
In \cite[Theorem 2]{Desjardins2019}, the authors also assume that the Brünt-Väisälä frequency $N^2$ defined through \eqref{eqn:N2} is constant and $\Vb = 0$, as well as a well-preparation of the initial data and the Boussinesq assumption, that is the density $\varrho$ is constant except in the buoyancy term $b$. In this case, they obtain the time of existence $T = O(\frac{\sqrt{\mu}}{\epsilon})$. Such a result is qualified as large time well-posedness, as the time of existence grows as $\epsilon$ goes to $0$.\\
In the case of $N^2$ non-constant and $\Vb \neq 0$, the time of existence in \eqref{eqn:time} does not grow as $\epsilon \to 0$, and such a result is qualified as short-time well-posedness. Let us mention that such a short-time well-posedness is not uncommon among the study of perturbations of non-constant equilibria  of non-linear systems, or more generally, of non-linear systems with non-constant coefficients. For example, the non-linear shallow water equations with non-flat bottom suffer from the same difficulty, although it has been resolved in \cite{BreschMetivier10}. Another example is the water-waves system, for which the large time well-posedness is known for the flat bottom case, but is still an open problem for the non-flat bottom case (see \cite{Lannes2013},\cite{Mesognon17}, \cite[Sections 2.5, 8.7]{Duchene2022a}).\\
In the spirit of weakening the assumptions made in \cite{Desjardins2019}, we allow $N^2$ to depend on $r$ and a non-zero shear flow $\Vb$, and we do not impose the Boussinesq approximation nor the well-preparation of the initial data stated above. Doing so, other contributions to the time of existence in \eqref{eqn:time} appear, and we do not keep track of them here, but only prove that they contribute to the time of existence through the term $1$ in \eqref{eqn:time}. In the statement of Theorem \ref{thm:wp}, we rather state that $T$ is independent of $\epsilon$ and $\mu$, under the assumptions of Theorem \ref{thm:wp}, for the sake of conciseness. 
A more detailed comparison is provided in Remark \ref{rk:trick}.\\
\end{remark}
\begin{remark}
Let us discuss the smallness assumption on the shear velocity now. First notice that this implies that the Miles and Howard criterion \eqref{eqn:mh} is verified for $\mu$ small enough. Then, notice that the total vorticity, when $d=2$, is defined as
$$ \omega_{\tot} := \begin{pmatrix}
 \partial_r V_{\tot}^{\perp} - \sqrt{\mu}\nabla_x^{\perp} w_{\tot} \\ - \nabla_x^{\perp} \cdot V_{\tot} 
\end{pmatrix}. $$
Here, $\nabla_x^{\perp} := \begin{pmatrix} - \partial_{x_2} \\ \partial_{x_1} \end{pmatrix} . $\\
Thus, the assumption of small shear velocity $|\Vb'|_{L^{\infty}} \leq \sqrt{\mu}$ yields that the vorticity at equilibrium (that is, when $\epsilon = 0$) is of order $\sqrt{\mu}$. Together with the assumption $\epsilon \leq \sqrt{\mu}$, this implies that the horizontal component of the total vorticity is of order $\sqrt{\mu}$. This is in accordance with the result in \cite{Castro2014a}, where the authors study the well-posedness of the water waves equations with vorticity under this assumption.
\end{remark}
\begin{remark}
Let us finally comment on the regularity $s \geq s_0+\frac52$ needed here. Note that this is equivalent to $s > \frac{d+1}{2} +2$, where $d+1$ is the dimension of the strip $S_r$. The classical critical regularity  for similar results on the Euler equations is $\frac{d+1}{2} +1$, where $d+1$ denotes the dimension of the domain occupied by the fluid. This discrepancy is due to the non-linearity introduced by the operators $\gradphi$, namely recall the expression of the term $\gradphi[x] P$ in \eqref{eqn:euler_slag}
$$ \gradphi[x] P := \nabla_x P + \epsilon \frac{\nabla_x \eta}{1+\epsilon h} \partial_r P,$$
where $h := - \partial_r \eta$. From a technical viewpoint, this is the sharp regularity needed on $\eta$ so that $(\nabla_x \eta, h) \in W^{1,\infty}$ by Sobolev embeddings. 
\end{remark}

\subsection{Perspectives}
Let us mention a few perspectives. \\

As shown in this study, the energy method can be applied to the reformulation \eqref{eqn:euler_slag} of the Euler equations in isopycnal coordinates, without the regularization used in \cite{Duchene2022}. This offers the possibility of using the reformulation in isopycnal coordinates for the study of stratified flows. For instance, one advantage of the isopycnal coordinates, besides the quasi-2D structure, is the rectification of the isopycnals. In the free surface case, assuming that the surface is initially an isopycnal, this change of coordinates also rectifies the surface (see for instance \cite{Duchene2022}): in this setting the free-surface case is quite close to the rigid lid case, and we expect that our study can be extended to the free-surface case with minor adaptations, see Remarks \ref{rk:slag_existence} and \ref{rk:free-surf}. However, the extension to the case of a non-flat bottom seems to require adaptations. On one hand, the isopycnal coordinates are not adapted to this setting any more, as they do not rectify the bottom in general. On the other hand, and more importantly, the dependency of the time of existence of the solutions in the shallow-water parameter $\mu$ would need to be carefully investigated. \\

Recall now that under the assumptions of no shear flow, uniform stratification and strong Boussinesq assumption, Theorem \ref{thm:wp} yields a time of existence of order $1$ with respect to $\epsilon$. However, with the additional assumption of good preparation on the initial data, \cite[Theorem 2]{Desjardins2019} yields an existence time of order $\frac{1}{\epsilon}$ with respect to $\epsilon$. Therefore, it remains to study the phenomenon arising in the long-time dynamics when no assumption of good preparation is made. In \cite{Klein2023}, the authors perform a similar study, though in a different regime, and show the existence of a boundary layer at the bottom. \\

Eventually, it would be of utmost interest to study a specific stratification that is ubiquitous in oceanography, namely two layers of (almost) constant density, separated by a pycnocline, where the density varies sharply but in a continuous manner. In \cite{Desjardins2019}, the authors suggested to rely on the modal decomposition they introduced to obtain the two-layer shallow water equations as a limit when the thickness of the pycnocline and the shallow water parameter $\mu$ both tend to zero. The isopycnal coordinates seem a useful setting, as the isopycnals are straightened, therefore so is the pycnocline - which can be viewed as a blurred interface between the two layers. Relying on the regularization used in \cite{Duchene2022}, the authors of \cite{Adim2024} proved the convergence of this continuous stratification with thin pycnocline setting towards the two-layer setting, in the hydrostatic case.

\subsection{Structure of the proof and plan of the paper}
In Section \ref{section:pressure}, we show how the pressure is defined from the other unknowns through an elliptic equation. We state in Subsection \ref{subsection:pression:results} two estimates on the pressure term. In Subsection \ref{subsection:pression:proofs}, we prove these estimates.  \\
In Section \ref{section:energy}, we derive the energy estimate on the unknowns $V,w,\eta$, that we want uniform in $\epsilon$ and $\mu$. The approach in Subsection \ref{subsection:energy_low_reg} yields a loss of derivatives, but is used in Subsection \ref{subsection:energy_high_reg} together with Alinhac's good unknown to derive energy estimates without any loss of derivatives. \\
In Section \ref{section:conclusion}, we finish the proof of Theorem \ref{thm:wp}. In particular, we show in Proposition \ref{prop:existence} how the quasi-2D structure of \eqref{eqn:euler_slag} and the elliptic estimates for the pressure in Proposition \ref{lemma:elliptic_high_reg} allow us to use the classical energy method to prove the existence of solutions. We also briefly show how Theorem \ref{thm:wp} yields existence and uniqueness of solutions in Eulerian coordinates.\\
Appendix \ref{appendixA} is concerned with product, commutator and composition estimates, and Appendix \ref{appendixB} gathers definitions and results about Alinhac's good unknown. Appendix \ref{appendixC} states an estimate on the non-linear term and the buoyancy term in \eqref{eqn:euler_slag}. In this appendix we also show how to use the divergence-free condition to get a control on $\partial_r w$ that is uniform in $\mu$.
\subsection{Notations}
\begin{itemize}[label = \textbullet]
\item If $(a,b) \in \R^2$, then $a\vee b$ denotes the maximum between $a$ and $b$, and $a \wedge b$ denotes the minimum.
\item The letter $\mathbb{\Lambda}$ generically represents a differential operator. We denote by $\Lambda$ the Fourier multiplier on $\R^d$ of symbol $(1+|\xi|^2)^{\frac12}$ and $|D|$ the Fourier multiplier on $\R^d$ of symbol $|\xi|$. We mostly use $\Lambda^s$ or $|D|^2 \Lambda^{s-2}$ for $s \in \R$, as their symbols are smooth. The symbol of the later vanishes at $\xi = 0$, which is a property that we need in some cases (more precisely, see \eqref{eqn:Ldeux} in the proof of Proposition \ref{lemma:elliptic:2} as well as \eqref{eqn:alinhac:magie} in the proof of Lemma \ref{apdx:alinhac:def}).
\item We denote by $\varrho(r)$ the density of the layer $r$ for $r \in [0,1]$. The stable stratification assumption yields that $\varrho$ is strictly increasing, as $r=0$ denotes the surface, and we assume that $ 0 < \varrho(0) < \varrho(1)$. The precise dependence on these two constants $\rho_0 := \varrho(0)$ and $ \rho_1 := \varrho(1)$ is not specified in the rest of the study. 
\item We use an isopycnal change of coordinates
\begin{equation}
\label{eqn:def:phi}
\varphi(t,x,r) = (t,x,-r + \epsilon \eta(t,x,r)),
\end{equation}
where the function $(t,x,r) \mapsto \eta(t,x,r)$ is a $C^1$ function, such that $\eta_{|r=0} = \eta_{r=1}=0$ and $ h := -\partial_r \eta$ satisfies $0 < h_* \leq 1+\epsilon h \leq h^*$, and  
$$(\rhob + \epsilon \rho)(t,x,-r + \epsilon \eta(t,x,r)) = \varrho(r).$$ 
The Jacobian of $\varphi$ is thus given by 
\begin{equation}
\label{eqn:def:jacphi} \begin{aligned} \partial \varphi &= \begin{pmatrix} 1 & 0 & \epsilon \partial_t \eta \\
									0 & I_d & \epsilon \nabla_x \eta \\
									0 & 0 & -1 - \epsilon h \end{pmatrix} \\
	 \end{aligned} \end{equation}
and its determinant is $-(1+ \epsilon h)$. \\
The gradient in Eulerian coordinates expressed in isopycnal coordinates is defined as $ \gradphi f := (\nabla (f\circ \varphi^{-1}))\circ \varphi.$ The chain rule yields 
\begin{equation}
\label{eqn:def:gradphi}
\begin{aligned}
\partial_t^{\varphi} f &= \partial_t f + \epsilon \frac{\partial_t \eta}{1+\epsilon h} \partial_r f, & \qquad \partial_r^{\varphi} f &=- \frac{1}{1+\epsilon h} \partial_r f, \\
\gradphi[x] f &= \nabla_{x} f + \epsilon \frac{\nabla_{x} \eta}{1+\epsilon h} \partial_r f. \\
 \end{aligned}
\end{equation}
We also write $\gradphi := (\gradphi[x], \partial_r^{\varphi})^T$.
\item For $\mu \in (0,1]$, we define
\begin{equation}
\label{eqn:def:gradphimu}
\nouveau{\nabla_{\mu} := \begin{pmatrix}
 \sqrt{\mu} \nabla_x \\ \partial_r \end{pmatrix}, }
 \qquad 
 \gradphi[\mu] := \begin{pmatrix}
 \sqrt{\mu} \gradphi[x] \\ \partial_r^{\varphi}  \end{pmatrix}.
 \end{equation}
\item For $p \in [1,\infty]$, we use the standard $L^p-$based Sobolev spaces
\begin{equation}
\label{eqn:sobolev:anisotrope:def}
W^{s,p}(\R^d) := \{ \phi  \in L^p(\R^d), |\phi |_{W^{s,p}} < + \infty \},  \qquad |\phi |_{W^{s,p}} := |\Lambda^s \phi  |_{L^p}.
\end{equation}
We denote by simple bars $|\cdot |$ norms in dimensions $1$ or $d$.
We write $H^s := W^{s,2}.$ We use the functional spaces, for $s \in \R_+$, \nouveau{$k\in \N$ with }$0 \leq k \leq s$,$p \in [1,\infty]$:
$$ W^{s,k,p}(S_r) := \{\phi  \in L^p(S_r), \Vert \phi  \Vert_{W^{s,k,p}} < \infty \}, \qquad \Vert \phi  \Vert_{W^{s,k,p}} :=  \sum\limits_{l = 0}^{k} \Vert \Lambda^{s-l} \partial_{r}^l \phi  \Vert_{L^p},  $$
and write $H^{s,k} := W^{s,k,2}$. In the case $s=k$, we write $H^s(S_r) := H^{s,k}(S_r)$ \nouveau{ and $W^{s,\infty}(S_r) := W^{s,s,\infty}(S_r)$}. We only use $L^2$ and $L^{\infty}$ based Sobolev spaces.
\item We introduce the space of locally square-integrable functions $L^2_{\loc}(S_r)$ as the functions that are square integrable on any compact subset of $S_r$. We then define the homogeneous Sobolev space

$$ \nouveau{\dot{H}^1(S_r) := \{ \phi  \in L^2_{\loc}(S_r), \nabla \phi  \in L^2(S_r) \}.}$$
\item Let $I \subset \R$ an interval, $X$ a Banach space. The space $C^k(I,X)$ denotes the space of $k$ times differentiable functions from $I$ to $X$, with continuous $k^{\text{th}}$-differential.
\item Let $\mathbb{\Lambda}$ be a differential operator, and $f,g$ two functions. We define the commutator $ [\mathbb{\Lambda},f]g := \mathbb{\Lambda}(fg) - f (\mathbb{\Lambda} g),$ as well as the symmetric commutator $[\mathbb{\Lambda};f,g] := \mathbb{\Lambda}(fg) - (\mathbb{\Lambda}f) g - f (\mathbb{\Lambda}g).$
\end{itemize}

\section{Pressure estimates}
\label{section:pressure}
In this section we show how the pressure is defined from the other unknowns, using the divergence-free condition. We also obtain an estimate on the pressure term thanks to the elliptic equation satisfied by the pressure. 
In Subsection \ref{subsection:pression:results}, we state the two main results of this section : Proposition \ref{lemma:elliptic_high_reg} states that the pressure is well defined and as regular as the unknowns $(\eta,V,w)$. However, the estimate \eqref{eqn:elliptic:high_reg} involves $\gradphi[\mu] P$ (defined through \eqref{eqn:def:gradphimu}). Proposition \ref{lemma:elliptic:high_reg:perte} states a similar result involving $\gradphi P$, however with a loss of derivatives, i.e. $\gradphi P$ is less regular than $(\eta,V,w)$. \\
In Subsection \ref{subsection:pression:proofs}, we first state two elliptic estimates, namely in Propositions \ref{lemma:elliptic:1} and \ref{lemma:elliptic:2}. Proposition \ref{lemma:elliptic:1} is classical, and we only prove Proposition \ref{lemma:elliptic:2}. The strategy for proving Proposition \ref{lemma:elliptic:2} relies on an adaptation of standard elliptic estimates, using Alinhac's good unknown. However, it is important to notice that alternative proofs that do not use Alinhac's good unknown do exist, see Remark \ref{rk:pas_euleriennes}. Then we state a bound on the difference of two solutions of elliptic equations of the form of those of Proposition \ref{lemma:elliptic:2} in Proposition \ref{lemma:elliptic:2:tilde}. The proof is an adaptation of the proof of Proposition \ref{lemma:elliptic:2} so that we only highlight the differences.  Eventually we use Propositions \ref{lemma:elliptic:1} and \ref{lemma:elliptic:2} to prove Propositions \ref{lemma:elliptic_high_reg} and \ref{lemma:elliptic:high_reg:perte}. 
\subsection{Statement of the results}
\label{subsection:pression:results}
To get the elliptic problem \eqref{eqn:elliptic} satisfied by the pressure, we mimic the method in Eulerian coordinates : we multiply the equation on the horizontal momentum in \eqref{eqn:euler_slag} by $\mu$, and want to apply $\gradphi[x] \cdot$ on this equation, $\partial_r^{\varphi}$ on the equation on the vertical momentum, then sum the two expressions. However, note that $\partial_t$ and $ \gradphi \cdot$ do not commute. To bypass this difficulty, note that \eqref{eqn:euler_slag} can still be written without the property of quasi-2D advection \eqref{eqn:slag}, and reads
\begin{equation}
\tag{$E'$}
\label{eqn:euler_phi}
\sys{ \partial_t^{\varphi} (\Vb + \epsilon V) + (\Ub + \epsilon \vec{U}) \cdot \gradphi (\Vb + \epsilon V) + \frac{\epsilon }{\varrho} \gradphi[x] P  & = 0, \\
\mu\left(\partial_t^{\varphi} w + (\Ub + \epsilon \vec{U}) \cdot \gradphi w \right) + \frac{1}{\varrho} \partial_r^{\varphi} P  + b  & = 0, \\
		\partial_t \eta + (\Vb + \epsilon V) \cdot \nabla_x \eta  - w &= 0,\\
		\nabla_x^{\varphi} \cdot (\Vb + \epsilon V) + \epsilon \partial_r^{\varphi}w &= 0. \\}
\end{equation}
Here, $\Ub := (\Vb,0)^T$ and recall the expression \eqref{eqn:def:b} of b.
The operators $\partial_t^{\varphi}$ and $\nabla^{\varphi}\cdot$ commute, as can be seen by change of coordinates. Therefore, we apply $\frac{\mu}{\epsilon} \gradphi[x] \cdot$ to the first equation, $\partial_r^{\varphi} $ to the second one. Summing these two expressions yields

\begin{equation}
\label{eqn:elliptic}
 \gradphi[\mu] \cdot \frac{1}{\varrho} \gradphi[\mu] P  = - \frac{\mu}{\epsilon}  \gradphi \cdot \left(\NLun\right) -  \partial_r^{\varphi} b.
\end{equation}
This is completed with Neumann conditions at the bottom and rigid lid, by taking the trace of the equation on the vertical momentum in \eqref{eqn:euler_phi}
\begin{equation}
\label{eqn:elliptic:bc}
\left(\frac{1}{\varrho} \partial_{r}^{\varphi} P\right)_{|r=0; r=1} = - \left. b  \right|_{r=0; r=1}.
\end{equation}
We also wrote 
$$ \gradphi[\mu] := \begin{pmatrix}
						\sqrt{\mu} \gradphi[x] \\ \partial_r^{\varphi} \end{pmatrix},\qquad \nabla_{\mu} := \begin{pmatrix}
						\sqrt{\mu} \nabla_x \\ \partial_r \end{pmatrix}.$$ 
Note that \eqref{eqn:elliptic} is valid at any given time $t$, it is not an evolution equation. Therefore, we omit the dependence in time in this section, to simplify the notations.\\
Using the divergence-free condition in \eqref{eqn:euler_phi}, we can write
\begin{equation}
\label{eqn:elliptic:ordre1}
 \gradphi[\mu] \cdot \frac{1}{\varrho} \gradphi[\mu] P  = - \frac{\mu}{\epsilon}  \NLdeux -  \partial_r^{\varphi} b,
\end{equation}
with the convention of summing over repeated indices. The equation \eqref{eqn:elliptic:ordre1} is completed with the boundary conditions \eqref{eqn:elliptic:bc}.
Let $\Mb,M > 0$, as well as $s_0 > d/2$, $s \geq s_0+\frac52$ with $s \in \N$. 
Throughout this section, we assume that there exists $h_* > 0$ such that\begin{hyp}
\label{hyp:h:elliptic}
1 + \epsilon h := - (1 + \epsilon \partial_r \eta) \geq h_*.
\end{hyp}
We also assume\begin{hyp}
 \label{hyp:M:elliptic}
 \Vert \eta \Vert_{H^{s}} + \Vert V \Vert_{H^{s}} + \sqrt{\mu}\Vert w \Vert_{H^{s}}  \leq M.
 \end{hyp}We also need to assume the particular asymptotic regime
\begin{hyp}
\label{hyp:epsilonmu}
\epsilon \leq \sqrt{\mu},
\end{hyp}
for the following estimates to be independent of $\mu$. We now state the main results of this section.
\begin{proposition}
	\label{lemma:elliptic_high_reg}
	Let $s_0 > d/2$, $s \in \N$ with $s \geq s_0 + \frac52  $. Let $(V,w,\eta) \in H^s(S_r)^{d+2}$ be such that the divergence-free condition in \eqref{eqn:euler_slag} and the boundary conditions \eqref{eqn:euler_slag:bc} are satisfied. We further assume \eqref{hyp:non_cavitation}, \eqref{hyp:Mb}, \eqref{hyp:h:elliptic}, \eqref{hyp:M:elliptic}, and \eqref{hyp:epsilonmu} for some constants $\Mb,M$. Then the pressure $P$ defined through \eqref{eqn:elliptic} and \eqref{eqn:elliptic:bc} is well-defined in $\dot{H}^1/\R$ and satisfies the following bound, with $C > 0$ a constant depending only on $M$, $\Mb$
\begin{equation}
\label{eqn:elliptic:high_reg}
\Vert \gradphi[\mu]P \Vert_{H^{s}} \leq C \left( \Vert \eta \Vert_{H^{s}} + \Vert V \Vert_{H^{s}} +\sqrt{\mu}\Vert w \Vert_{H^{s}} \right) .
\end{equation}
\end{proposition}
\nouveau{
\begin{remark}
On the right-hand side of \eqref{eqn:elliptic:high_reg}, we keep the factor $\Vert \eta \Vert_{H^{s}} + \Vert V \Vert_{H^{s}} +\sqrt{\mu}\Vert w \Vert_{H^{s}}$, instead of writing \eqref{eqn:elliptic:high_reg} as
$$\Vert \gradphi[\mu]P \Vert_{H^{s}} \leq CM.$$
The reason is that this factor is needed in order to write the energy estimate of Proposition \ref{lemma:energy}, which in turn is in a suitable form to use Grönwall's lemma.
\end{remark}
}
We now state a similar result, where the bound is uniform in $\mu$ but there is a loss of derivatives due to the use of the hydrostatic pressure in the proof.  Recall indeed that the estimate in Proposition \ref{lemma:elliptic_high_reg} involves $\gradphi[\mu] P := (\sqrt{\mu} \gradphi[x]P, \partial_r^{\varphi} P)^T$, whereas the following estimate is on $\gradphi P :=  (\gradphi[x]P, \partial_r^{\varphi} P)^T$.
\begin{proposition}
\label{lemma:elliptic:high_reg:perte}
With the same assumptions and notations as in Proposition \ref{lemma:elliptic_high_reg}, we have the bound
\begin{equation}
\label{eqn:elliptic_high_reg:perte}
\Vert \gradphi P \Vert_{H^{s}} \leq  C\left( \Vert \eta \Vert_{H^{s+1}} + \Vert V \Vert_{H^{s+1}} + \sqrt{\mu}\Vert w \Vert_{H^{s+1}}\right).
\end{equation}
\end{proposition}
\begin{remark}
\label{rk:dirichlet}
In other settings, the elliptic equation \eqref{eqn:elliptic} can be completed with different boundary conditions. For instance in the case of a non-flat bottom, the Neumann boundary condition at the bottom involves the trace of normal derivative of the pressure at the bottom rather than just its vertical derivative. In the case of a free-surface, the boundary condition at the top is a (usually homogeneous) Dirichlet boundary condition. In both cases, the proof of Proposition \ref{lemma:elliptic_high_reg} can be adapted, however the resulting estimates might exhibit important changes, namely the dependency of the estimate on the regularity on the free-surface and on the shallow-water parameter $\mu$ are not obvious. Further computations would then be necessary to evaluate these changes, and fall outside of the scope of this study.
\end{remark}
\subsection{Proofs of the results}
\label{subsection:pression:proofs}
\newcommand{\Fzero}{f_0}
\newcommand{\Fun}{\vec{F}_1}
\newcommand{\Fdeux}{f_2}
\newcommand{\Ftrois}{f_{3,i}}
\newcommand{\Gx}{\vec{G_x}}
\newcommand{\Gr}{\vec{G_r}}
\newcommand{\G}{\vec{G}}
In order to prove Propositions \ref{lemma:elliptic_high_reg} and \ref{lemma:elliptic:high_reg:perte}, we state two more general results.\\
We first state Proposition \ref{lemma:elliptic:1}, which is a classical existence and uniqueness result with an estimate on the solution, of the elliptic equation $\eqref{eqn:elliptic:gen:1}$ with boundary conditions \eqref{eqn:elliptic:gen:1:bc}, which is a generalization of the equation \eqref{eqn:elliptic} with boundary conditions \eqref{eqn:elliptic:bc}. However, this result involves a loss of derivatives in $\eta$.\\
We show how to deal with this issue in Proposition \ref{lemma:elliptic:2}, which deals with \eqref{eqn:elliptic:gen:2} with boundary conditions \eqref{eqn:elliptic:gen:2:bc} (generalizing \eqref{eqn:elliptic:ordre1} with boundary conditions \eqref{eqn:elliptic:bc}), and which does not involve any loss of derivatives. \nouveau{Note however that in Proposition \ref{lemma:elliptic:2}, we assume the existence of a solution, and its control in a low regularity norm, namely \eqref{hyp:P:lowreg}.} We also provide a bound on the difference of two solutions of \eqref{eqn:elliptic:gen:2} with boundary conditions \eqref{eqn:elliptic:gen:2:bc} in Proposition \ref{lemma:elliptic:2:tilde}. \\
Finally, we show how Propositions \ref{lemma:elliptic:1} and \ref{lemma:elliptic:2} imply Propositions \ref{lemma:elliptic_high_reg} and \ref{lemma:elliptic:high_reg:perte}.
\subsubsection{A first elliptic equation}
We solve the more general elliptic equation
\begin{equation}
\label{eqn:elliptic:gen:1}
 \gradphi[\mu] \cdot \frac{1}{\varrho} \gradphi[\mu] P  = \gradphi[\mu] \cdot \G.
\end{equation}
This is completed with Neumann conditions
\begin{equation}
\label{eqn:elliptic:gen:1:bc}
\left( \frac{1}{\varrho}  \partial_r^{\varphi} P\right)_{|r=i} =    \vec{e_{d+1}} \cdot {\G}_{|r=i},
\end{equation}
for $i \in \{0,1\}$. Here, $\G$ is a source term, and $\varphi$ is defined from $\eta$ through \eqref{eqn:def:phi}. Note that we assume that the top and bottom boundaries are flat, through the boundary conditions \eqref{eqn:euler_slag:bc} on $\eta$.
Recall also that $\gradphi$, $\gradphi[\mu]$ are defined from $\varphi$ through \eqref{eqn:def:gradphi} and \eqref{eqn:def:gradphimu}. Finally, for $s_0 > d/2$, $0 \leq k \leq s$ with $s \geq s_0 + \frac52$, we write
\begin{hyp}
\label{hyp:M:elliptic:sk}
\Vert \eta \Vert_{H^{s,k}} \leq M,
\end{hyp} 
for $M > 0$ a constant.
\begin{proposition}
\label{lemma:elliptic:1}
Let $s_0 > d/2$,  $2 \leq k \leq s_0+\frac32$ with $k \in \N$. We assume that $\varrho$ and $\eta$ are such that \eqref{hyp:non_cavitation}, \eqref{hyp:Mb}, \eqref{hyp:h:elliptic}, \eqref{hyp:M:elliptic:sk} with $s \geq s_0+\frac52$ hold, and that \nouveau{$\eta$ satisfies the last two boundary conditions in \eqref{eqn:euler_slag:bc}}. \\
Then there exists a unique solution $P \in \dot{H}^1(S_r)/\R$ solution to \eqref{eqn:elliptic:gen:1} with boundary condition \eqref{eqn:elliptic:gen:1:bc} and there exists a constant $C > 0$ depending only on $M$,$\Mb$ such that

$$\Vert \gradphi[\mu] P \Vert_{H^{s_0+\frac32,k}} \leq C (1+\Vert \eta \Vert_{H^{s_0+\frac52,k+1}}) \Vert \G \Vert_{H^{s_0+\frac32,k}}.$$

\end{proposition} 
The proof is a standard result on elliptic equations and we omit it. We refer to \cite[Lemma 4]{Desjardins2019} for a general sketch of the proof. The only difference with this latter result is the operators $\gradphi[\mu]$ that come from the isopycnal change of variables, that induce commutators that are treated with standard commutator estimates (namely, those of Lemma \ref{apdx:commutator}); this latter point is detailed in \cite[Lemma 4.1]{Duchene2022}, that deals with an elliptic equation of the form \eqref{eqn:elliptic:gen:1} (in particular, written in isopycnal coordinates), although with different boundary conditions than \eqref{eqn:elliptic:gen:1:bc}.
\begin{remark}
Note that in Proposition \ref{lemma:elliptic:1}, the factor $1+\Vert \eta \Vert_{H^{s_0+\frac52,k+1}}$ could be included in the constant $C$, as $C$ depends on $\Vert \eta\Vert_{H^s}$ with $s \geq s_0+\frac52$ according to \eqref{hyp:M:elliptic:sk}. We keep this factor in Proposition \ref{lemma:elliptic:1} to emphasize that the gradient of the pressure $\gradphi[\mu] P$ is less regular than $\eta$. This result is not optimal, and it will be improved in Proposition \ref{lemma:elliptic:2} thanks to Alinhac's good unknown (see however Remark \ref{rk:pas_euleriennes} for an alternative proof that does not use Alinhac's good unknown).\\
Another difference with Proposition \ref{lemma:elliptic:2} is that the source term in \eqref{eqn:elliptic:gen:1} solved in Proposition \ref{lemma:elliptic:1} needs to be in divergence form in order to have existence of a solution to \eqref{eqn:elliptic:gen:1} with the Neumann boundary conditions \eqref{eqn:elliptic:gen:1:bc}. In the equation \eqref{eqn:elliptic:gen:2} solved in Proposition \ref{lemma:elliptic:2}, the source term is not in divergence form, and we need to assume the existence of a solution, as well as its control in a low regularity norm, namely \eqref{hyp:P:lowreg}.
\end{remark}
\begin{remark}
\label{rk:elliptic:anisotrope}
In Propositions \ref{lemma:elliptic:1}, \ref{lemma:elliptic:2} and \ref{lemma:elliptic:2:tilde}, we state elliptic estimates in the anisotropic spaces $H^{s,k}$, for $k \leq s$ and $k,s$ large enough, as these are the spaces used in the proof, in which one needs to distinguish vertical and horizontal derivatives. In Propositions \ref{lemma:elliptic_high_reg} and \ref{lemma:elliptic:high_reg:perte}, we state elliptic estimates in the spaces $H^s(S_r)$, that is to say in the special case $s \in \N$ and $k=s$ with the notations above, as this is the only framework in which these propositions will be applied.
\end{remark}
\subsubsection{A second elliptic equation}
\newcommand{\Gun}{G^{(1)}}
\newcommand{\Gdeux}{\vec{G}^{(2)}}
\newcommand{\Guntilde}{\tilde{G}^{(1)}}
\newcommand{\Gdeuxtilde}{\vec{\tilde{G}}^{(2)}}
\newcommand{\Gi}{G_{i}}
We provide estimates on the quantity $P$ solving the following elliptic equation
\begin{equation}
\label{eqn:elliptic:gen:2}
 \gradphi[\mu] \cdot \frac{1}{\varrho} \gradphi[\mu] P  = \sqrt{\mu}\Gun + \gradphi[\mu] \cdot \Gdeux,
\end{equation}
where $\Gun,\Gdeux$ are  source terms. Once again, $\varphi$ is defined through \eqref{eqn:def:phi}. We assume that $\eta$ vanishes at the top and bottom boundaries $r=0$ and $r=1$, that is we assume the last two identities in \eqref{eqn:euler_slag:bc}. The equation \eqref{eqn:elliptic:gen:2} is completed with Neumann conditions
\begin{equation}
\label{eqn:elliptic:gen:2:bc}
\left( \frac{1}{\varrho} \partial^{\varphi}_r P\right)_{|r=i} = \vec{e_{d+1}} \cdot \Gdeux_{|r=i},
\end{equation}
for $i \in \{0,1\}$. Here, $\Gdeux_{|r=i}$ denotes the trace of $\Gdeux$ at $r=i$. We assume that $P$ is in $\dot{H}^1/\R$ and moreover that
\begin{hyp}
\label{hyp:P:lowreg}
\Vert \nouveau{\gradphi[\mu]} P \Vert_{H^{s_0+\frac32,2}} \leq M.
\end{hyp}
\begin{proposition}
\label{lemma:elliptic:2}
Let $s_0 > d/2$,  $ s \geq s_0+\frac52$, $3 \leq k \leq s$ with $k \in \N$. We assume that there exists $\Mb,M$ such that \eqref{hyp:non_cavitation}, \eqref{hyp:Mb}, \eqref{hyp:h:elliptic}, \eqref{hyp:epsilonmu}, \eqref{hyp:M:elliptic:sk}, \eqref{hyp:P:lowreg} hold. \nouveau{We assume further that $\eta$ satisfies the last two boundary conditions in \eqref{eqn:euler_slag:bc}}. \\
Assume that there exists a solution $P \in \dot{H}^1(S_r)/\R$ to \eqref{eqn:elliptic:gen:1} with boundary conditions \eqref{eqn:elliptic:gen:2:bc} satisfying \eqref{hyp:P:lowreg}. Then there exists a constant $C > 0$ depending only on $M$ and $\Mb$ such that
\begin{equation}
\label{eqn:elliptic:2}
\Vert \gradphi[\mu] P \Vert_{H^{s,k}} \leq C\left( \Vert \gradphi[\mu] P \Vert_{H^{s_0+\frac32,2}} +  \Vert \Gun \Vert_{H^{s-1,k-1}}  + \Vert \Gdeux \Vert_{H^{s,k}}  \right).
\end{equation}
\end{proposition}
\nouveau{
\begin{remark}
In Proposition \ref{lemma:elliptic:2}, the existence of $P$ as well as its control in a low regularity norm are assumed. In the proofs of Proposition \ref{lemma:elliptic_high_reg} and \ref{lemma:elliptic:high_reg:perte} as well as in Step 2 of the proof of Proposition \ref{prop:existence}, we first check that these two assumptions are satisfied, by applying Proposition \ref{lemma:elliptic:1}, and only then apply Proposition \ref{lemma:elliptic:2}. This is possible because the pressure $P$ involved in \eqref{eqn:euler_slag} satisfies both \eqref{eqn:elliptic} and \eqref{eqn:elliptic:ordre1}.
\end{remark}
}
We now write an estimate on the difference of two solutions of elliptic equations of the form \eqref{eqn:elliptic:gen:2} with boundary conditions \eqref{eqn:elliptic:gen:2:bc}.\\
Let $\tilde{\eta} \in H^{s_0+\frac32,2}$ for $s_0 > d/2$ satisfying the last two identities in \eqref{eqn:euler_slag:bc} as well as \eqref{hyp:h:elliptic}. Then, $\tilde{\varphi}$ is defined from $\tilde{\eta}$ through \eqref{eqn:def:phi}, and $\nabla^{\tilde{\varphi}}$ is defined from $\tilde{\varphi}$ through \eqref{eqn:def:gradphi}.
Let $\tilde{P}$ solving the following elliptic equation
\begin{equation}
\label{eqn:elliptic:gen:2:tilde}
 \nabla^{\tilde{\varphi}}_{\mu} \cdot \frac{1}{\varrho} \nabla^{\tilde{\varphi}}_{\mu} \tilde{P}  = \sqrt{\mu}\Guntilde + \nouveau{\nabla^{\tilde{\varphi}}_{\mu}} \cdot \Gdeuxtilde,
\end{equation}
where $\Guntilde,\Gdeuxtilde$ are  source terms. The equation \eqref{eqn:elliptic:gen:2:tilde} is completed with Neumann conditions
\begin{equation}
\label{eqn:elliptic:gen:2:tilde:bc}
\left( \frac{1}{\varrho} \partial^{\tilde{\varphi}}_r \tilde{P}\right)_{|r=i} = \vec{e_{d+1}} \cdot \Gdeuxtilde_{|r=i},
\end{equation}
for $i \in \{0,1\}$. Here, $\Gdeuxtilde_{|r=i}$ denotes the trace of $\Gdeuxtilde$ at $r=i$.
For the following proposition, we use the following assumption: there exists a constant $ M \geq 0$ such that
\nouveau{
\begin{hyp}
\label{hyp:borne:G}
\Vert \Gun \Vert_{H^{s-1,k-1}} + \Vert \Guntilde \Vert_{H^{s-1,k-1}} + \Vert \Gdeux \Vert_{H^{s,k}} + \Vert \Gdeuxtilde \Vert_{H^{s,k}} \leq M,
\end{hyp}
}for $s$ and $k$ as in the statement of Proposition \ref{lemma:elliptic:2:tilde}.
\begin{proposition}
\label{lemma:elliptic:2:tilde}
Let $s_0 > d/2$,  $ s \geq s_0+\frac52$, $3 \leq k \leq s$ with $k \in \N$. We assume that there exists $M,\Mb$ such that \eqref{hyp:non_cavitation}, \eqref{hyp:Mb}, \eqref{hyp:h:elliptic}, \eqref{hyp:epsilonmu}, \eqref{hyp:M:elliptic:sk} hold for both $\eta$ and $\tilde{\eta}$, and \eqref{hyp:P:lowreg} holds for both $P$ and $\tilde{P}$. \nouveau{We assume that $M,\Gun, \Guntilde, \Gdeux, \Gdeuxtilde$ are such that \eqref{hyp:borne:G} holds.} \nouveau{We assume that $\eta$ and $\tilde{\eta}$ satisfy the last two boundary conditions in \eqref{eqn:euler_slag:bc}}.\\
Assume that there exist $P,\tilde{P} \in \dot{H}^1(S_r)/\R$ solutions to \eqref{eqn:elliptic:gen:2} with boundary conditions \eqref{eqn:elliptic:gen:2:bc} and \eqref{eqn:elliptic:gen:2:tilde}  with boundary conditions \eqref{eqn:elliptic:gen:2:tilde:bc} respectively, both satisfying \eqref{hyp:P:lowreg}. Then there exists a constant $C > 0$ depending only on $M$ and $\Mb$ such that
\begin{equation}
\label{eqn:elliptic:2:tilde}
\Vert \gradphi[\mu] P - \nabla^{\tilde{\varphi}}_{\mu} \tilde{P} \Vert_{H^{s,k}} \leq C\left( \Vert \gradphi[\mu] P- \nabla^{\tilde{\varphi}}_{\mu} \tilde{P}  \Vert_{H^{s_0+\frac32,2}} +  \Vert \Gun - \Guntilde \Vert_{H^{s-1,k-1}}  + \Vert \Gdeux - \Gdeuxtilde \Vert_{H^{s,k}}  + \Vert \eta - \tilde{\eta} \Vert_{H^{s,k}} \right).
\end{equation}
\end{proposition}
\begin{remark}
\label{rk:pas_loc_lip}
The estimate \eqref{eqn:elliptic:2:tilde} (together with a control the term $\Vert \gradphi[\mu] P - \nabla^{\tilde{\varphi}}_{\mu} \tilde{P} \Vert_{H^{s_0+\frac32,2}}$ on the right-hand side of \eqref{eqn:elliptic:2:tilde}, that can be treated similarly as in Proposition \ref{lemma:elliptic:1}) implies the local Lipschitz continuity in $H^s$ of the map $(V,w,\eta) \mapsto \gradphi[\mu] P,$
with $\gradphi P$ defined through Proposition \ref{lemma:elliptic_high_reg}. The local Lipschitz continuity is understood as the one of a map from $(H^{s})^{d+2}$ to $(H^{s})^{d+1}$, with $s$ as in Proposition \ref{lemma:elliptic_high_reg}, and where $(V,w,\eta) \in (H^{s})^{d+2}$ satisfy the assumptions of Proposition~\ref{lemma:elliptic_high_reg}. However, we only use this property for a slightly different term in the proof of Proposition \ref{prop:existence}. For more details, see Step 2 of the proof of Proposition \ref{prop:existence}.
\end{remark}
The rest of this subsection is devoted to the proof of Propositions \ref{lemma:elliptic:2} and \ref{lemma:elliptic:2:tilde}. \\

The main difference between the proofs of Proposition \ref{lemma:elliptic:2} and  the elliptic estimates performed in \nouveau{\cite[Lemma 4.1]{Duchene2022}  that we refer to for a proof of Proposition \ref{lemma:elliptic:1} }is that we use the relation \eqref{apdx:alinhac:comm1} and Alinhac's good unknown to commute the differential operators $\mathbb{\Lambda}$ (defined in the proof) and $\gradphi$ without loss of derivatives.\\
The proof of Proposition \ref{lemma:elliptic:2:tilde} is very similar to the proof of Proposition \ref{lemma:elliptic:2} so that we only point out the adaptations that need to be performed. 
\begin{remark}
\label{rk:pas_euleriennes}
An alternative proof for Proposition \ref{lemma:elliptic:2} is the following, and it should be noted that such a proof does not use Alinhac's good unknown. One can change coordinates back to Eulerian coordinates in the elliptic equation \eqref{eqn:elliptic:gen:2} and its boundary condition \eqref{eqn:elliptic:gen:2:bc} (that is, applying the diffeomorphism $\varphi^{-1}$, inverse of $\varphi$ for the composition operation). Denoting $P_{\mathrm{eul}} := P \circ \varphi^{-1}$ (resp. $\Gun_{\mathrm{eul}},\Gdeux_{\mathrm{eul}}$), \eqref{eqn:elliptic:gen:2} becomes
\begin{equation}
\label{eqn:elliptic:gen:2:eul}
\nabla_{\mu}\cdot \frac{1}{\varrho \circ \varphi^{-1}} \nabla_{\mu} P_{\mathrm{eul}}= \sqrt{\mu} \Gun_{\mathrm{eul}} + \nabla_{\mu} \cdot \Gdeux_{\mathrm{eul}} \qquad \nouveau{\text{ in } S_z},
\end{equation}
where $\nabla_{\mu} := (\sqrt{\mu} \nabla_x^T,\partial_z)^T$. The boundary conditions \eqref{eqn:elliptic:gen:2:bc} become
\begin{equation}
\label{eqn:elliptic:gen:2:bc:eul}
\left( \frac{1}{\varrho\circ\varphi^{\nouveau{-}1}} \partial_z P_{\mathrm{eul}}\right)_{|\nouveau{z}=i} = \vec{e_{d+1}} \cdot \Gdeux_{\mathrm{eul}|\nouveau{z}=i}.
\end{equation}
Then, elliptic estimates can be derived from \eqref{eqn:elliptic:gen:2:eul} and \eqref{eqn:elliptic:gen:2:bc:eul} following the same scheme of proof as the proof of Proposition \ref{lemma:elliptic:2}, although without using Alinhac's good unknown. In this regard, these estimates in Eulerian coordinates are fairly standard, although special attention to the dependency in $\mu$ of the various estimates is still required. Then one can prove that one can bound $\Vert \gradphi[\mu] P \Vert_{H^{s}(S_r)}$ by $\Vert \nabla_{\mu} P_{\mathrm{eul}} \Vert_{H^{s}(S_z)}$ up to a constant that depends on $M$ defined in \eqref{hyp:M:elliptic:sk}. \\
The drawback of this method is that it is unclear how to adapt it to prove Proposition \ref{lemma:elliptic:2:tilde}. Hence we rather prove Proposition \ref{lemma:elliptic:2} using Alinhac's good unknowns rather than Eulerian coordinates. Let us point out the differences with more standard elliptic estimates. Despite the use of Alinhac's good unknown already mentioned, the dependency of the estimates with respect to $\mu$ is not obvious, and we keep track of this parameter. Also, as $P \in \dot{H}^1/\R$, we do not control $\Vert P \Vert_{L^2}$; these last two \nouveau{points} interact in a rather poor manner, see \eqref{eqn:Ldeux} where we need $\epsilon \leq \sqrt{\mu}$ to avoid a singularity in $\mu$. 
\end{remark}
\begin{proof}[Proof of Proposition \ref{lemma:elliptic:2}.]
\textbf{\underline{Step 1 :} The elliptic equation satisfied by Alinhac's good unknown} \\

\newcommand{\diff}{\mathbb{\Lambda}}
Let $ \diff$ be an operator of the form $|D|^2 \Lambda^{s-2}$ or $\Lambda^{s-k}\partial_r^k$ \nouveau{with $1 \leq k \leq s$}. We apply $\diff$ to the elliptic problem \eqref{eqn:elliptic:gen:2} satisfied by the pressure, assuming that $P$ is smooth enough so that the computations needed to derive the estimate in \ref{lemma:elliptic_high_reg} make sense. To close the argument, one should use a smoothing operator $ \Lambda_{\iota}$ instead of $\Lambda$ in Steps 2 and 3 below, and check that the estimates are uniform with respect to $\iota > 0$. Then, taking the limit $\iota \to 0$ yields the result. This is detailed in \cite[Chapter 2]{Lannes2013} so that we directly use $\Lambda$ and assume the needed regularity. \\
We apply $\diff$ to $\gradphi[\mu]\cdot \frac{1}{\varrho} \gradphi[\mu]P$. This yields, using \eqref{apdx:alinhac:comm2}:\\

$$\begin{aligned} 
\diff  \gradphi[\mu]\cdot \frac{1}{\varrho} \gradphi[\mu]P &= \gradphi[\mu]\cdot \left( \frac{1}{\varrho} \gradphi[\mu]P \right)^{\diff} + \epsilon \left(\diff  \eta \partial_r^{\varphi} \right)\left(\gradphi[\mu]\cdot \frac{1}{\varrho} \gradphi[\mu]P\right) + \epsilon \Ral{2}{\nablamu \frac{1}{\varrho} \gradphi[\mu]P} \\
&= \gradphi[\mu]\cdot \left( \frac{1}{\varrho} \diff  \left(\gradphi[\mu]P\right) + \frac{\epsilon \diff \eta}{1+\epsilon h} \partial_{r}\left(\frac{1}{\varrho} \gradphi[\mu]P\right) \right) + \epsilon \left(\diff  \eta \partial_r^{\varphi} \right)\left(\gradphi[\mu]\cdot \frac{1}{\varrho} \gradphi[\mu]P\right) + \epsilon \Ral{2}{\nablamu \frac{1}{\varrho} \gradphi[\mu]P}  + \gradphi[\mu] \cdot\left( [\diff,\frac{1}{\varrho} ]\gradphi[\mu]P \right), \end{aligned}$$
where, for a quantity $f$, $f^{\diff}$ denotes Alinhac's good unknown relative to this quantity and the operator $\diff$, see \eqref{apdx:alinhac:definition} for its definition. Using \eqref{apdx:alinhac:comm1}, the right-hand side of the previous expression becomes
$$\begin{aligned}
& \gradphi[\mu]\cdot \left( \frac{1}{\varrho} \gradphi[\mu]P^{\diff} + \epsilon \frac{1}{\varrho} (\diff \eta \partial_r^{\varphi})\gradphi[\mu]P + \epsilon \nouveau{\frac{1}{\varrho}}\Ral{1}{\nablamu P}+ \frac{\epsilon \diff \eta}{1+\epsilon h} \partial_{r}\left(\frac{1}{\varrho} \gradphi[\mu]P\right) \right) \\
&+ \epsilon \left(\diff  \eta \partial_r^{\varphi} \right)\left( \gradphi[\mu]\cdot \frac{1}{\varrho} \gradphi[\mu]P\right) + \epsilon \Ral{2}{\nablamu \frac{1}{\varrho} \gradphi[\mu]P} + \gradphi[\mu] \cdot \left([\diff,\frac{1}{\varrho}] \gradphi[\mu]P\right).
\end{aligned}$$
According to the elliptic equation \eqref{eqn:elliptic:gen:2} and multiplying by $\nouveau{-}(1+\epsilon h)$, we thus get the equation:
\begin{equation}
\label{eqn:elliptic:alinhac}
\begin{aligned} -(1+\epsilon h) \gradphi[\mu] &\cdot \frac{1}{\varrho} \gradphi[\mu]P^{\diff}  = (1+\epsilon h)\gradphi[\mu] \cdot \left(\frac{1}{\varrho} (\epsilon \diff  \eta \partial_r^{\varphi})\gradphi[\mu] P\right) \\
&+ \epsilon (1+\epsilon h)\gradphi[\mu] \cdot \nouveau{\frac{1}{\varrho}}\Ral{1}{\nablamu P} + (1+\epsilon h)\gradphi[\mu] \cdot \left(\frac{\epsilon \diff \eta}{1+\epsilon h} \partial_{r}\left(\frac{1}{\varrho} \gradphi[\mu]P\right)\right)\\
& + \epsilon (1+\epsilon h)\left(\diff  \eta \partial_r^{\varphi}\right)\left(\gradphi[\mu] \cdot \frac{1}{\varrho} \gradphi[\mu]P\right) + \epsilon (1+\epsilon h)\Ral{2}{\nablamu\frac{1}{\varrho} \gradphi[\mu]P} \\
&+ (1+\epsilon h)\gradphi[\mu] \cdot \left([\diff,\frac{1}{\varrho}] \gradphi[\mu]P\right) \nouveau{-} \sqrt{\mu}(1+\epsilon h)\diff \Gun\\
&\nouveau{-} (1+\epsilon h)\gradphi[\mu] \cdot {\Gdeux}^{\diff} \nouveau{-} \epsilon (1+\epsilon h) \diff \eta \partial_r^{\varphi} \gradphi[\mu] \cdot \Gdeux \nouveau{-} \epsilon (1+\epsilon h) \Ral{2}{\nablamu \Gdeux}. \end{aligned}
\end{equation}
Note that we used \eqref{apdx:alinhac:comm2} on the source term $\gradphi[\mu] \cdot \Gdeux$. In the case $\diff = |D|^2 \Lambda^{s-2}$ (that is, we only differentiate \eqref{eqn:elliptic:gen:2} in the tangential direction with respect to the boundary of the strip $S_r$), we can apply $\diff$ to the boundary conditions \eqref{eqn:elliptic:gen:2:bc} thanks to \eqref{apdx:alinhac:comm1} and the equation \eqref{eqn:elliptic:alinhac} is completed with the boundary conditions, for $i$ in $\{0,1\}$
\begin{equation}
\label{eqn:elliptic:alinhac:bc}
\begin{aligned}
&\left(\frac{1}{\varrho} \partial^{\varphi}_r P^{\diff}  + \epsilon \frac{1}{\varrho} \diff \eta \partial_r^{\varphi} \partial^{\varphi}_r P + \epsilon \frac{1}{\varrho}  \vec{e_{d+1}} \cdot \Ral{1}{\nabla_{\mu} P} \right)_{|r = i} \\
&= \vec{e_{d+1}} \cdot {\Gdeux}^{\diff}_{|r=i}; 
\end{aligned}
\end{equation}
note that the right-hand side of \eqref{eqn:elliptic:alinhac:bc} is exactly $\vec{e_{d+1}} \cdot \diff \Gdeux$, as by the definition \eqref{apdx:alinhac:definition} of the good unknown and by the boundary condition \eqref{eqn:euler_slag:bc} on $\eta$, we can write ${\Gdeux}^{\diff}_{|r=i} = \diff \Gdeux_{|r=i}$.\\
\renewcommand{\diff}{\mathbb{\Lambda}}
\textbf{\underline{Step 2 :} Estimate on $\Vert \nabla^{\varphi}P^{\diff} \Vert_{L^2}$ for $\diff = |D|^2 \Lambda^{s-2}$} \\
We use \eqref{eqn:elliptic:alinhac} with   $ \diff := |D|^2\Lambda^{s-2}$, and write it under the form
\begin{equation}
\label{eqn:elliptic:alinhac:bis}
-(1+\epsilon h) \gradphi[\mu] \cdot \frac{1}{\varrho} \gradphi[\mu]P^{\diff}  =  \epsilon \Fzero + (1+\epsilon h)\gradphi[\mu] \cdot \Fun  + \sqrt{\mu}|D| \Fdeux,
\end{equation} 
where we have set
\begin{equation}
\label{eqn:elliptic:2:restes}
\sys{		\epsilon \Fzero  &:= \epsilon (1+\epsilon h)\left(\diff  \eta \partial_r^{\varphi}\right)\left(\gradphi[\mu] \cdot \frac{1}{\varrho} \gradphi[\mu]P\right) + \epsilon (1+\epsilon h)\Ral{2}{\nouveau{\nablamu \frac{1}{\varrho} \gradphi[\mu]}P} + \sqrt{\mu} \epsilon[|D|^2,h] \Lambda^{s-2}\Gun \\
&- \epsilon  (1+\epsilon h)\diff \eta \partial_r^{\varphi} \gradphi[\mu] \cdot \Gdeux - \epsilon  \nouveau{(1+\epsilon h)}\Ral{2}{\nablamu \Gdeux},\\
		\Fun  &= \epsilon \frac{1}{\varrho} (\diff  \eta \partial_r^{\varphi})\gradphi[\mu] P + \epsilon \nouveau{\frac{1}{\varrho}}\Ral{1}{\nablamu P} +  \frac{\epsilon \diff \eta}{1+\epsilon h} \partial_{r}\left(\frac{1}{\varrho} \gradphi[\mu]P\right) - {\Gdeux}^{\diff},\\
		\Fdeux  &= -|D|((1+\epsilon h) \Lambda^{s-2} \Gun);}
\end{equation}
note that in the present case $\diff = |D|^2 \Lambda^{s-2}$, we can write $[\diff,\frac{1}{\varrho}]=0$ as $\varrho$ only depends on $r$, and also that for the seventh term on the right-hand side of \eqref{eqn:elliptic:alinhac} we can write
$$ \sqrt{\mu}(1+\epsilon h) \diff\Gun = \sqrt{\mu} |D|^2 ((1+\epsilon h) \Lambda^{s-2}\Gun) - \sqrt{\mu} \epsilon [|D|^2,h] \Lambda^{\nouveau{s-2}}\Gun.$$
We use an induction on $s$. The base case $s \nouveau{\leq} s_0 + \frac32$ in \eqref{eqn:elliptic:2} is automatically satisfied since $\Vert \gradphi[\mu] P \Vert_{H^{s_0+\frac32,2}}$ appears on the right-hand side. We thus take $s \geq s_0+\frac52$. 
We now test \eqref{eqn:elliptic:alinhac:bis} against $P^{\diff}$. Integrating by parts \nouveau{(using Lemma \ref{apdx:IPP})} and using the boundary conditions \eqref{eqn:elliptic:alinhac:bc} yields
\begin{equation}
\label{eqn:elliptic:temp:1}
\begin{aligned}
\left\Vert \nouveau{\sqrt{\frac{1+\epsilon h}{\varrho}}} \gradphi[\mu] P^{\diff} \right\Vert_{L^2}^2 &\leq \epsilon \Vert \Fzero \Vert_{L^2} \Vert P^{\diff} \Vert_{L^2} + \Vert \Fun \Vert_{L^2} \Vert \nouveau{(1+\epsilon h)}\gradphi[\mu] P^{\diff} \Vert_{L^2} + \Vert \Fdeux \Vert_{L^2} \Vert \sqrt{\mu} |D| P^{\diff} \Vert_{L^2}.
\end{aligned}
\end{equation}
We use Young's inequality for the first term of \eqref{eqn:elliptic:temp:1}, and introduce a parameter $0<\delta \leq 1$ to be fixed later (see \eqref{eqn:elliptic:temp:6}), independent of $\epsilon$ and $\mu$. This yields
\begin{equation}
\label{eqn:elliptic:temp:2}
\left\Vert \nouveau{\sqrt{\frac{1+\epsilon h}{\varrho}} }\gradphi[\mu] P^{\diff} \right\Vert_{L^2}\leq \delta \Vert  \Fzero \Vert_{L^2} + \epsilon\frac{1}{\delta} \Vert P^{\diff} \Vert_{L^2} + \nouveau{C}\Vert (\Fun,\Fdeux) \Vert_{L^2}.
\end{equation}
We now need to estimate each of those terms. We first need a control on $\Vert P^{\diff}\Vert_{L^2}$. This is provided by induction, namely we have:
$$P^{\diff} := \diff P + \epsilon \frac{\diff \eta}{1+\epsilon h} \partial_r P  = \diff P - \epsilon \diff \eta\partial_r^{\varphi} P.$$
Thus we have a control on $\Vert P^{\diff} \Vert_{L^2}$ by induction on $s$ (recall that $\diff := |D|^2\Lambda^{s-2}$ and the product estimate \eqref{apdx:pdt:tame}):
\begin{equation}
\label{eqn:Ldeux}
 \Vert P^{\diff} \Vert_{L^2} \leq \frac{C}{\sqrt{\mu}}\Vert \nabla_{\mu} P \Vert_{H^{s-1,0}} + \epsilon C \Vert \eta \Vert_{H^{s,0}} \Vert \gradphi[\mu] P \Vert_{H^{s_0+\frac12,1}},
\end{equation}
for $C > 0$ a constant that does not depend on $\epsilon$ nor $\mu$. Note that there is a singularity in $\mu$, however it is compensated by the factor $\epsilon$ in \eqref{eqn:elliptic:temp:2}, thanks to \eqref{hyp:epsilonmu}. \\
For the first term of $\Fzero $, we use the product estimate \eqref{apdx:pdt:tame} to get
$$ \Vert \nouveau{(1+\epsilon h)}\left(\epsilon \diff  \eta \partial_r^{\varphi}\right)\left(\gradphi[\mu] \cdot \frac{1}{\varrho} \gradphi[\mu]P\right) \Vert_{L^2} \leq C \epsilon \Vert \eta \Vert_{H^{s,0}} \Vert \gradphi[\mu] \cdot \frac{1}{\varrho} \gradphi[\mu]P \Vert_{H^{s_0+\frac32,2}}.$$
We use the equation \eqref{eqn:elliptic:2} to replace $\gradphi[\mu] \cdot \frac{1}{\varrho} \gradphi[\mu]P$ by the right-hand side. The product estimate \eqref{apdx:pdt:tame} then yields
$$ \Vert \nouveau{(1+\epsilon h)} \left(\epsilon \diff  \eta \partial_r^{\varphi}\right)\left(\gradphi[\mu] \cdot \frac{1}{\varrho} \gradphi[\mu]P\right) \Vert_{L^2} \leq C\epsilon \Vert \eta \Vert_{H^{s,0}} \Vert \sqrt{\mu}\Gun + \gradphi[\mu] \cdot \Gdeux\Vert_{H^{s_0+\frac32,2}}.$$
For the second term of $\Fzero $, we use the estimate \nouveau{\eqref{apdx:alinhac:R2}} to get
$$\begin{aligned} \Vert \nouveau{(1+\epsilon h)}\Ral{2}{\nablamu  \frac{1}{\varrho} \gradphi[\mu]P} \Vert_{L^2} &\leq C \Vert \nabla \eta \Vert_{H^{s-1,2}} \Vert \gradphi[\mu] \frac{1}{\varrho} \gradphi[\mu]P \Vert_{H^{s-1,2}}.\\
\end{aligned} $$
For the third term in $\Fzero$, we use the commutator estimate \eqref{apdx:commutator:horizontal} to get
$$\Vert[|D|^2,\epsilon h] \Lambda^{s-2}\Gun \Vert_{L^2} \leq C \epsilon \Vert \eta \Vert_{H^{s_0+\frac52,3}} \Vert \Gun \Vert_{H^{s-1,2}}. $$
The other terms of $\Fzero$ are treated the same way and we omit them. We can eventually write
$$\Vert \Fzero  \Vert_{L^2} \leq C \left(\sqrt{\mu} \Vert \Gun \Vert_{H^{s-1,2}}+ \Vert \Gdeux \Vert_{H^{s,3}} + \Vert \gradphi[\mu] P \Vert_{H^{s,2}}\right). $$
For the first term of $\Fun $, we use \nouveau{\eqref{hyp:non_cavitation}} as well as the product estimate \eqref{apdx:pdt:tame} to get
	$$ \Vert \frac{1}{\varrho} (\epsilon \diff  \eta \partial_r^{\varphi})\gradphi[\mu] P\Vert_{L^2} \leq \epsilon C \Vert \eta \Vert_{H^{s,0}} \Vert \gradphi[\mu] P \Vert_{H^{s_0+\frac32,2}}. $$
	For the second term of $\Fun $, we use \nouveau{\eqref{hyp:non_cavitation}} and the estimate \eqref{apdx:alinhac:R1:un}
	$$\begin{aligned} \Vert \nouveau{\frac{1}{\varrho}}\Ral{1}{\gradphi[\mu]P} \Vert_{L^2} &\leq C  \Vert \eta \Vert_{H^{s_0+\frac52,2}} \Vert \gradphi[\mu] P\Vert_{H^{s-1,0}} + C  \Vert \eta \Vert_{H^{s,0}} \Vert \gradphi[\mu] P\Vert_{H^{s_0+\frac32,1}} \leq C \Vert \gradphi[\mu] P\Vert_{H^{s-1,0}} + C \Vert \gradphi[\mu] P\Vert_{H^{s_0+\frac32,1}},\end{aligned}$$
as $C$ can depend on $M$ used in \eqref{hyp:M:elliptic:sk}. For the third term of $\Fun $, we use \nouveau{\eqref{hyp:non_cavitation}, \eqref{hyp:Mb} together with the product estimate \eqref{apdx:pdt:infty} for the factor $\frac{1}{\varrho}$} and the product estimate \eqref{apdx:pdt:tame}:
	$$  \left\Vert \frac{\diff \eta}{1+\epsilon h} \partial_{r}\left(\frac{1}{\varrho} \gradphi[\mu]P\right) \right\Vert_{L^2} \leq  C \Vert \eta \Vert_{H^{s,0}} \Vert \gradphi[\mu] P \Vert_{H^{s_0+\frac32,2}}. $$
	The bound on the last term of $\Fun$ comes directly from the definition \eqref{apdx:alinhac:definition} and the product estimate \cite[Lemma A.3]{Duchene2022}, and we can thus write
	$$\Vert \Fun  \Vert_{L^2} \leq \epsilon C \left(\Vert \nabla^{\varphi}_{\mu}P\Vert_{H^{s-1,0}} + \Vert \gradphi[\mu] P\Vert_{H^{s_0+\frac32,2}} + \Vert \Gdeux \Vert_{H^{s,2}}\right). $$
The term in $\Fdeux$ is directly controlled by $\Vert \Gun\Vert_{H^{s-1,1}}$ by the product estimate \eqref{apdx:pdt:tame}.
From the control on $\Fzero$ $\Fun$, $\Fdeux$ above and \eqref{eqn:Ldeux}, we get from \eqref{eqn:elliptic:temp:2}:
\begin{equation}
\label{eqn:elliptic:temp:3}
\begin{aligned}
\Vert \gradphi[\mu] P^{\diff} \Vert_{L^2}&\leq C \delta \left( \sqrt{\mu} \Vert \Gun \Vert_{H^{s-1,2}}+ \Vert \Gdeux \Vert_{H^{s,3}} + \epsilon \Vert \gradphi[\mu] P \Vert_{H^{s,2}}\right) + C\frac{\epsilon}{\delta} \left(\frac{1}{\sqrt{\mu}}\Vert \nabla_{\mu} P \Vert_{H^{s-1,0}} + \epsilon \Vert \eta \Vert_{H^{s,0}} \Vert \gradphi[\mu] P \Vert_{H^{s_0+\frac12,1}}\right) \\
&+ C \epsilon  \left(\Vert \Gdeux \Vert_{H^{s,2}} + \Vert \gradphi[\mu] P \Vert_{H^{s-1,0}} + \Vert \gradphi[\mu] P \Vert_{H^{s_0+\frac32,2}} \right) +\nouveau{C}\Vert \Gun\Vert_{H^{s-1,1}},
\end{aligned}
\end{equation}
\nouveau{where we used \eqref{hyp:non_cavitation} and \eqref{hyp:h:elliptic} to simplify the left-hand side}. We now use \eqref{apdx:alinhac:grad:horizontal} and \eqref{hyp:epsilonmu} to simplify the above expression to get
\begin{equation}
\label{eqn:elliptic:temp:4}
\begin{aligned}
\Vert \gradphi[\mu] P \Vert_{H^{s,0}}&\leq C \left( \Vert \Gun \Vert_{H^{s-1,2}} + \Vert \Gdeux \Vert_{H^{s,\nouveau{3}}}+ \frac{1}{\delta}\Vert \gradphi[\mu] P \Vert_{H^{s_0+\frac32,2}}\right) + C\epsilon \delta \Vert \gradphi[\mu] P \Vert_{H^{s,2}} + C\frac{1}{\delta} \Vert \gradphi[\mu] P \Vert_{H^{s-1,1}}.
\end{aligned}
\end{equation}
\textbf{\underline{Step 3 :} Control of higher vertical derivatives} \\
\renewcommand{\diff}{\mathbb{\Lambda}}
We now want an estimate on $\Vert \nablamu P \Vert_{H^{s,k}}$ for $k \geq 1$. To this end, we start by noticing 
\begin{equation}
\label{eqn:reduction_vertical}
 \Vert \gradphi[\mu]P \Vert_{H^{s,k}}  \leq c \Vert \gradphi[\mu]P \Vert_{H^{s,k-1}} + c\Vert \Lambda^{s-k}\partial_r^{k-1} \partial_{r} \partial_r^{\varphi} P \Vert_{L^2},
 \end{equation}
where $c$ only depends on $s$ and $k$. We aim to use the equation \eqref{eqn:elliptic:gen:2} to control the vertical derivatives. We use an induction on $s$ and $k$. More precisely, the base case $k=0$ and any $ s \geq 0$ is given by the previous step, although the term $\Vert \gradphi[\mu] P \Vert_{H^{s,2}}$ in \eqref{eqn:elliptic:temp:4} needs to be taken care of; this is done at the end of this step. The base case $s\leq s_0+\frac32$, $k \leq s$ is given by Assumption \eqref{hyp:P:lowreg}. Therefore, we assume from now on that $\Vert \gradphi[\mu] P \Vert_{H^{s',k'}}$ is controlled for both $s'<s$, $k' \leq s'$ and $s'=s, k'<k$.\\
The first term in \eqref{eqn:reduction_vertical} is controlled by induction. For the second term, we use the equation \eqref{eqn:elliptic:gen:2}: Let    $\diff:=\Lambda^{s-k}\partial_r^{k-1}$ (when $k\geq 2$) or $\diff := |D|^2 \Lambda^{s-3}$ (when $k=1$), we apply $\diff$ to \eqref{eqn:elliptic:gen:2}. Recall \eqref{eqn:elliptic:alinhac} derived in Step 1.
We write $R$ for the right-hand side, multiply by $1+\epsilon h$ and expand the left-hand side to get :
\begin{equation}
\label{eqn:vertical_PY}
\begin{aligned} \left( \frac{1 + \mu |\nabla_x \eta|^2}{\varrho\nouveau{(1+\epsilon h)}} \right)\partial_r^2 P^{\diff}  &= - \mu \nabla_x \cdot \left(\frac{(1+\epsilon h)}{\varrho} \nabla_x P^{\diff} + \epsilon \frac{\nabla_x \eta}{\varrho} \partial_r P^{\diff}\right) \nouveau{-} \mu \partial_r \left( \epsilon \frac{\nabla_x \eta \cdot \nabla_x P}{\varrho} \right) \nouveau{-}   \partial_r \left( \frac{1+ \mu \epsilon^2|\nabla_x \eta|^2}{\varrho(1+\epsilon h)}  \right) \partial_r P^{\diff}  \nouveau{-} (1+\epsilon h) R. \end{aligned}
\end{equation}
We take the $L^2$-norm of the previous expression. Recall that $1+\epsilon h \leq h^*$. We use the product estimate \eqref{apdx:pdt:tame}, and the composition estimate \cite[Lemma A.5]{Duchene2022} for $\frac{1}{1+\epsilon h } = 1 - \frac{\epsilon h}{1+\epsilon h}$.  We get
\begin{equation}
\label{eqn:elliptic:temp:5}
\Vert \partial_r^2 P^{\diff} \Vert_{L^2} \leq C (\Vert R \Vert_{L^2} + \Vert \gradphi[\mu]P^{\diff} \Vert_{H^{1,0}} ),
\end{equation}
by using that $\frac{h^*}{\rho_*}\frac{1}{\varrho (1+\epsilon h)} \geq c_* h_* > 0$.\\
The estimation of the terms in $R$ is very similar to the previous step, except for two terms. The first one is the term $\gradphi[\mu] \cdot \Ral{1}{\nablamu  P}$; it is enough to bound $\Vert\Ral{1}{\nablamu P} \Vert_{H^{1}(S_r)}$. We already have the $L^2$ bound \eqref{apdx:alinhac:R1}. Recall that, up to a harmless factor, $ \Ral{1}{\nablamu  P}$ is the symmetric commutator $ [\diff; (\partial \varphi)^{T}, \gradphi[\mu] P]$ so that for $L = \partial_{x_i}$ (with $i \in \{1, \dots, d\}$) or $L= \partial_r$, we write, by Leibniz' rule and $[L,\diff]=0$:
$$ L [\diff; (\partial \varphi)^{T}, \gradphi[\mu] P]= [\diff; L(\partial \varphi)^T , \gradphi[\mu] P ] + [\diff; (\partial \varphi)^T , L(\gradphi[\mu] P) ].$$
For the first term we use the commutator estimate \eqref{apdx:commutator_sym:gen}. For the second term, the same estimate would yield the term $\Vert \gradphi[\mu] P \Vert_{H^{s_0+\frac52,3}}$, which is not controlled by induction. We rather expand the symmetric commutator to write
$$ \Vert [\diff; (\partial \varphi)^T , (L\gradphi[\mu] P) ] \Vert_{L^2} \leq \Vert [\diff; (\partial \varphi)^T ] (L\gradphi[\mu] P) \Vert_{L^2} + \vert (\diff (\partial \varphi)^T) (L\gradphi[\mu] P) \Vert_{L^2}.$$
Then, using the commutator estimate \eqref{apdx:commutator:gen} for the first term and the product estimate \eqref{apdx:pdt:tame} for the second one, we eventually get
\begin{equation}
\label{eqn:elliptic:Ral:H1}
\Vert \Ral{1}{\nablamu  P} \Vert_{H^1} \leq C\Vert \gradphi[\mu] P \Vert_{H^{s-1,2 \vee k\wedge(s-1)}}.
\end{equation}
The second term in $R$ that we need to control is $ \gradphi[\mu] \cdot [\diff,\frac{1}{\varrho}] \gradphi[\mu]P$. Note however that, as $\varrho$ only depends on $r$, in the case $k \geq 2$ we get 
$$[\diff,\frac{1}{\varrho}] = [\partial_r^{k-1}, \frac{1}{\varrho}] \Lambda^{s-k},$$ and the commutator estimate \eqref{apdx:commutator:infty} allows us to conclude 
$$ \Vert \gradphi[\mu] \cdot [\diff,\frac{1}{\varrho}] \gradphi[\mu]P \Vert_{L^2} \leq C \Vert \gradphi[\mu] P \Vert_{H^{s-1,k-1}}.$$
We now turn to the control of $\Vert \gradphi[\mu]P^{\diff} \Vert_{H^{1,0}}$ in \eqref{eqn:elliptic:temp:5}. First note that $\Vert \gradphi[\mu]P^{\diff} \Vert_{L^2}$ is controlled by induction and \eqref{apdx:alinhac:grad}. We apply $\partial_{x_i}$ (with $i \in \{ 1 \dots, d \}$) to \eqref{apdx:alinhac:comm1} to get
$$ \begin{aligned} \Vert \gradphi[\mu]P^{\diff} \Vert_{H^{1,0}} &\leq C \Vert \gradphi[\mu] P \Vert_{H^{s,k-1}} + C \epsilon \Vert \eta \Vert_{H^{s,k-1}} \Vert \gradphi[\mu] P \Vert_{H^{s_0+\frac32,2}} + C \epsilon \Vert \gradphi[\mu] P \Vert_{H^{s-1,2\vee k\wedge(s-1)}},
\end{aligned}$$
where we used \eqref{eqn:elliptic:Ral:H1} to control the term $\Ral{1}{\nablamu P}$ from \eqref{apdx:alinhac:comm1}. Therefore, by induction, we have a bound for $\Vert \gradphi[\mu]P^{\diff} \Vert_{H^{1,0}}$. That is to say, we have a bound for $\Vert \partial_r \partial^{\varphi}_r P^{\diff} \Vert_{L^2}$ thanks to \eqref{eqn:elliptic:temp:5}. \\
Now applying $\partial_r$ to \eqref{apdx:alinhac:comm1} and using \eqref{eqn:elliptic:Ral:H1} to control the term $\Ral{1}{\nablamu P}$ from the right-hand side, we can write
$$ \Vert \partial_r  \Lambda^{s-k}\partial_r^{k-1} \partial_r^{\varphi} P \Vert_{L^2} \leq  \Vert \partial_r \partial_r^{\varphi} P^{\diff} \Vert_{L^2} + C \Vert \eta \Vert_{H^{s,k}} \Vert \nablamu P \Vert_{H^{s_0+\frac32,2}} +C \epsilon \Vert \gradphi[\mu] P \Vert_{H^{s-1,2\vee k\wedge(s-1)}} . $$
We thus get a control on $\Vert \Lambda^{s-k}\partial_r^{k}\partial_r^{\varphi}P \Vert_{L^2}$ by induction and the control on $\Vert \partial_r \partial_r^{\varphi} P^{\diff} \Vert_{L^2}$ so that by \eqref{eqn:reduction_vertical} and induction, we get, for any $s \geq s_0+\frac52$ and $0\leq k \leq s$
\begin{equation}
\label{eqn:elliptic:temp:6}
\begin{aligned}
\Vert \gradphi[\mu] P \Vert_{H^{s,k}} &\leq C\left( \Vert \Gun \Vert_{H^{s-1,k-1}} + \Vert \Gdeux \Vert_{H^{s,k}} +\frac{1}{\delta}\Vert \gradphi[\mu] P \Vert_{H^{s_0+\frac32,2}} \right)+ C \epsilon \delta \Vert\gradphi[\mu] P \Vert_{H^{s,2}} + C\frac{1}{\delta} \Vert\gradphi[\mu] P \Vert_{H^{s-1,1}}.
\end{aligned}
\end{equation}
We choose $\delta \leq \frac{1}{2C}$ so that, as $\epsilon \leq 1$, we have $C \epsilon \delta \leq \frac12$ and the last term in \eqref{eqn:elliptic:temp:6} can be absorbed on the left-hand side when $k \geq 2$. Note that $\delta$ is indeed independent of $\epsilon$ and $\mu$, in particular $\frac{1}{\delta}$ remains bounded. This yields \eqref{eqn:elliptic:2}.

\end{proof}
We now sketch the proof of Proposition \ref{lemma:elliptic:2:tilde}.
\begin{proof}[Proof of Proposition \ref{lemma:elliptic:2:tilde}]
\newcommand{\diff}{\mathbb{\Lambda}}
\newcommand{\Funtilde}{\tilde{\vec{F}}_1}
\newcommand{\Fzerotilde}{\tilde{f}_0}
\newcommand{\Fdeuxtilde}{\tilde{f}_2}
Recall that $s \geq s_0+\frac52$. First, we proceed as for Step 2 of the proof of Proposition \ref{lemma:elliptic:2}. Applying $\diff := |D|^2 \Lambda^{s-2}$ to the equations \nouveau{\eqref{eqn:elliptic:gen:2} and \eqref{eqn:elliptic:gen:2:tilde}} satisfied by $P$ and $\tilde{P}$ respectively, using Alinhac's good unknown, multiplying by $\nouveau{-}(1+\epsilon h)$ and $\nouveau{-}(1+\epsilon \tilde{h})$ respectively and then taking the difference between the two equations yields:
\begin{equation}
\label{eqn:elliptic:2:tilde:temp}
\begin{aligned}
\nouveau{-}(1+\epsilon h) \nabla_{\mu}^{\varphi} \cdot \frac{1}{\varrho} \nabla_{\mu}^{\varphi} (P^{\diff} - \tilde{P}^{\diff}) &= \epsilon(\Fzero - \Fzerotilde ) + (1+\epsilon h)\gradphi[\mu] \cdot \Fun - (1+\epsilon \tilde{h}) \nabla^{\tilde{\varphi}}_{\mu} \cdot \Funtilde  + \sqrt{\mu} |D|( \Fdeux - \Fdeuxtilde ) \\
&+ \left( (1+\epsilon h)\nabla_{\mu}^{\varphi} \cdot \frac{1}{\varrho} \nabla_{\mu}^{\varphi} - (1+\epsilon \tilde{h}) \nabla_{\mu}^{\tilde{\varphi}} \cdot \frac{1}{\varrho} \nabla_{\mu}^{\tilde{\varphi}}\right) \tilde{P}^{\diff}, 
\end{aligned}
\end{equation}
where $\Fzero,\Fun,\Fdeux$ are defined in \eqref{eqn:elliptic:2:restes}, and $\Fzerotilde ,\Funtilde ,\Fdeuxtilde $ are defined from $\tilde{P},\tilde{\eta},\tilde{h},\Guntilde,\Gdeuxtilde$ through \eqref{eqn:elliptic:2:restes}. \nouveau{ We multiply \eqref{eqn:elliptic:2:tilde:temp} by $P^{\diff} - \tilde{P}^{\diff}$ and integrate over $S_r$. We integrate by parts using Lemma \ref{apdx:IPP} for the left-hand side, the second, third and last terms on the right-hand side, and the boundary conditions \eqref{eqn:elliptic:gen:2:bc} and \eqref{eqn:elliptic:gen:2:tilde:bc} yield that the boundary terms cancel each other out. Using \eqref{hyp:non_cavitation} and \eqref{hyp:h:elliptic} yields that there exists $C > 0$ such that }
\nouveau{
\begin{equation}
\label{eqn:elliptic:2:tilde:temp2}
\begin{aligned}
\Vert \gradphi[\mu] (P^{\diff} - \tilde{P}^{\diff}) \Vert_{L^2}^2 &\leq C \epsilon \Vert \Fzero - \Fzerotilde  \Vert_{L^2} \Vert P^{\diff}- \tilde{P}^{\diff} \Vert_{L^2} + C \Vert \Fdeux - \Fdeuxtilde  \Vert_{L^2}\Vert\sqrt{\mu} |D| (P^{\diff} - \tilde{P}^{\diff}) \Vert_{L^2} \\
& + C \Vert \Fun - \Funtilde  \Vert_{L^2} \Vert (1+\epsilon h)\gradphi[\mu](P^{\diff} - \tilde{P}^{\diff}) \Vert_{L^2} + C \epsilon \Vert \Funtilde  \Vert_{L^2} \Vert \eta - \tilde{\eta} \Vert_{W^{1,\infty}} \Vert \nabla_{\mu} (P^{\diff} - \tilde{P}^{\diff})\Vert_{L^2}\\
&+ C \epsilon \Vert \eta - \tilde{\eta} \Vert_{W^{1,\infty}} \Vert \nabla_{\mu} \tilde{P}^{\diff} \Vert_{L^2} \Vert \gradphi[\mu](P^{\diff} - \tilde{P}^{\diff})\Vert_{L^2} + C \epsilon \Vert \nabla^{\tilde{\varphi}}_{\mu} \tilde{P}^{\diff}\Vert_{L^2}\Vert h - \tilde{h} \Vert_{L^{\infty}} \Vert \gradphi[\mu](P^{\diff} - \tilde{P}^{\diff}) \Vert_{L^2}  \\
&+ C\epsilon \Vert \nabla^{\tilde{\varphi}}_{\mu} \tilde{P}^{\diff} \Vert_{L^2} \Vert \eta - \tilde{\eta}\Vert_{W^{1,\infty}} \Vert \nabla_{\mu}(P^{\diff} - \tilde{P}^{\diff})\Vert_{L^2},
\end{aligned}
\end{equation}
}
where, for the terms in divergence form in \eqref{eqn:elliptic:2:tilde:temp}, we used 
$$(1+\epsilon h) \nabla^{\varphi}_{\mu} - (1+\epsilon \tilde{h}) \nabla^{\tilde{\varphi}}_{\mu} =  \nouveau{\epsilon (h-\tilde{h}) \begin{pmatrix}
\sqrt{\mu} \nabla_x \\ 0 \end{pmatrix} + \epsilon \begin{pmatrix} \sqrt{\mu} \nabla_x (\eta - \tilde{\eta}) \\0 \end{pmatrix} \partial_r,}$$
which is computed from the definition \eqref{eqn:def:gradphimu} of $\gradphi[\mu]$ and $\nabla^{\tilde{\varphi}}_{\mu}$.
Similarly to Step 2 of the proof of Proposition \ref{lemma:elliptic:2}, it remains to bound from above the terms $\Vert \Fzero - \Fzerotilde  \Vert_{L^2},\Vert \Fun - \Funtilde  \Vert_{L^2},\Vert \Fdeux - \Fdeuxtilde  \nouveau{\Vert_{L^2}} $, in which each term is multilinear in $P,\eta,\Gun,\Gdeux$ from $H^{s,k}(S_r)$ to $L^2\nouveau{(S_r)}$, so that we only treat the fourth term in $\Fzero - \Fzerotilde $, as a generic example. As from \eqref{eqn:def:gradphi} we get $(1+\epsilon h) \partial^{\varphi}_r = \nouveau{-}\partial_r$ (resp. $(1+\epsilon \tilde{h}) \partial^{\tilde{\varphi}}_r = \nouveau{-}\partial_r$), we write
\begin{equation}
\label{eqn:elliptic:2:tilde:temp3}
\begin{aligned}
(1+\epsilon h) \diff \eta \partial^{\varphi}_r \gradphi[\mu] \cdot \Gdeux - (1+\epsilon \tilde{h}) \diff \tilde{\eta} \partial^{\tilde{\varphi}}_r \nabla_{\mu}^{\tilde{\varphi}} \cdot \Gdeuxtilde &= \nouveau{-}\diff \eta \partial_r \nabla^{\varphi}_{\mu} \cdot (\Gdeux - \Gdeuxtilde) \nouveau{-} \diff \eta \partial_r ((\nabla_{\mu}^{\varphi} - \nabla_{\mu}^{\tilde{\varphi}} ) \cdot \Gdeuxtilde) \\
&\nouveau{-} (\diff \eta - \diff \tilde{\eta}) \partial_r \nabla_{\mu}^{\tilde{\varphi}} \cdot \Gdeuxtilde.
\end{aligned}
\end{equation}
From the definition \eqref{eqn:def:gradphimu} of $\gradphi[\mu]$ we infer
\begin{equation}
\label{eqn:elliptic:2:tilde:temp4}
\nabla_{\mu}^{\varphi} - \nabla_{\mu}^{\tilde{\varphi}} = \epsilon \left(\frac{1}{1+\epsilon h} \begin{pmatrix} \sqrt{\mu} \nabla_x \\ - \partial_r \end{pmatrix} (\eta - \tilde{\eta}) + \epsilon\frac{\tilde{h} - h}{(1+\epsilon h)(1+\epsilon \tilde{h})} \begin{pmatrix} \sqrt{\mu} \nabla_x \\ - \partial_r \end{pmatrix} \tilde{\eta} \right) \partial_r.
\end{equation}
From \eqref{eqn:elliptic:2:tilde:temp3}, using the triangular inequality, \nouveau{Hölder's inequality}, the product estimate \eqref{apdx:pdt:tame}, \eqref{eqn:elliptic:2:tilde:temp4} \nouveau{and \eqref{hyp:h:elliptic},} we get
\begin{equation}
\label{eqn:elliptic:2:tilde:temp5}
\Vert (1+\epsilon h) \diff \eta \partial^{\varphi}_r \gradphi[\mu] \cdot \Gdeux - (1+\epsilon \tilde{h}) \diff \tilde{\eta} \partial^{\tilde{\varphi}}_r \nabla_{\mu}^{\tilde{\varphi}} \cdot \Gdeuxtilde \Vert_{L^2} \leq C \left( \Vert \Gdeux - \Gdeuxtilde \Vert_{H^{s,3}} + \Vert \eta - \tilde{\eta} \Vert_{H^{s,3}} \right), 
\end{equation}
where we used $s \geq s_0+\frac52$. The other terms in \eqref{eqn:elliptic:2:tilde:temp2} are treated the same way and we omit them. We then get the following estimate, reminiscent of \eqref{eqn:elliptic:temp:3}:
\nouveau{
\begin{equation}
\label{eqn:elliptic:2:tilde:temp:6}
\begin{aligned}
\Vert \gradphi[\mu] (P^{\diff} - \tilde{P}^{\diff}) \Vert_{L^2}&\leq C \delta \left( \sqrt{\mu} \Vert \Gun - \Guntilde \Vert_{H^{s-1,2}}+ \Vert \Gdeux - \Gdeuxtilde \Vert_{H^{s,3}} + \epsilon \Vert \gradphi[\mu] P - \nabla^{\tilde{\varphi}}_{\mu} \tilde{P} \Vert_{H^{s,2}} + \epsilon \Vert \eta - \tilde{\eta} \Vert_{H^{s,3}}\right) \\
&+ C\frac{\epsilon}{\delta} \left(\frac{1}{\sqrt{\mu}}\Vert \nabla_{\mu} (P - \tilde{P}) \Vert_{H^{s-1,0}} + \Vert \gradphi[\mu] (P-\tilde{P}) \Vert_{H^{s_0+\frac12,1}} + \epsilon \Vert \eta - \tilde{\eta} \Vert_{H^{s,3}}\right) \\
&+ C \epsilon  \left(\Vert \Gdeux - \Gdeuxtilde \Vert_{H^{s,2}} + \Vert \gradphi[\mu] (P- \tilde{P}) \Vert_{H^{s-1,0}} + \Vert \gradphi[\mu] (P-\tilde{P}) \Vert_{H^{s_0+\frac32,2}} \right) +C\Vert \Gun - \Guntilde \Vert_{H^{s-1,1}} + C \epsilon \Vert \eta - \tilde{\eta} \Vert_{H^{s,3}};
\end{aligned}
\end{equation}}the only difference between \eqref{eqn:elliptic:2:tilde:temp:6} and \eqref{eqn:elliptic:temp:3} is the presence of the terms $\Vert \eta - \tilde{\eta} \Vert_{H^{s,3}}$, which come for instance from the last term in \eqref{eqn:elliptic:2:tilde:temp5}. We write:
\begin{equation}
\label{eqn:elliptic:2:tilde:temp:7}
\Vert \nabla^{\varphi}_{\mu} P^{\diff} - \nabla^{\tilde{\varphi}}_{\mu} \tilde{P}^{\diff} \Vert_{L^2} \leq \Vert \nabla^{\varphi}_{\mu} (P^{\diff} - \tilde{P}^{\diff}) \Vert_{L^2} + \Vert \eta - \tilde{\eta}\Vert_{H^{s_0+\frac32,2}} \Vert \partial_r \tilde{P}^{\diff} \Vert_{L^2},
\end{equation}
using \eqref{eqn:elliptic:2:tilde:temp4} and the product estimate \eqref{apdx:pdt:tame}. Using now \eqref{apdx:alinhac:comm1}, the left-hand side of \eqref{eqn:elliptic:2:tilde:temp:7} controls $\Vert \nabla^{\varphi}_{\mu} P - \nabla^{\tilde{\varphi}}_{\mu} \tilde{P}\Vert_{H^{s,0}}$ up to remainder terms; more precisely, from \eqref{eqn:elliptic:2:tilde:temp:7} and \eqref{eqn:elliptic:2:tilde:temp:6} and using  \eqref{apdx:alinhac:comm1}, we eventually get the following estimate, reminiscent of \eqref{eqn:elliptic:temp:4}:
\begin{equation}
\label{eqn:elliptic:2:tilde:temp:8}
\begin{aligned}
\Vert \gradphi[\mu] P - \nabla^{\tilde{\varphi}}_{\mu}\tilde{P} \Vert_{H^{s,0}}&\leq \frac{C}{\delta} \left( \Vert \Gun - \Guntilde \Vert_{H^{s-1,2}} + \Vert \Gdeux - \Gdeuxtilde \Vert_{H^{s,3}} + \Vert \eta - \tilde{\eta}\Vert_{H^{s,3}} + \nouveau{\Vert \gradphi[\mu] (P-\tilde{P}) \Vert_{H^{s_0+\frac32,2}}}\right) \\
&+ C\epsilon \delta \Vert \gradphi[\mu] P - \nabla^{\tilde{\varphi}}_{\mu}\tilde{P} \Vert_{H^{s,2}} + C\frac{1}{\delta} \Vert \gradphi[\mu] (P-\tilde{P}) \Vert_{H^{s-1,1}},
\end{aligned}
\end{equation}
where we used $\delta \leq 1$ to merge some terms coming from \eqref{eqn:elliptic:2:tilde:temp:6}. Then, a similar adaptation of Step 3 of the proof of Proposition \ref{lemma:elliptic:2} yields \eqref{eqn:elliptic:2:tilde} of Proposition \ref{lemma:elliptic:2:tilde}.
\end{proof}

\subsubsection{Proof of Proposition \ref{lemma:elliptic_high_reg}}
First, we apply Proposition \ref{lemma:elliptic:1} to the equation \eqref{eqn:elliptic}, \nouveau{ with the boundary condition \eqref{eqn:elliptic:bc}}, and with 
$$\G := \nouveau{-}\left( \frac{\sqrt{\mu}}{\epsilon}  (\vec{\Ub + \epsilon U}) \cdot \gradphi (\Vb + \epsilon V),\mu (\vec{\Ub + \epsilon U}) \cdot \gradphi w + b\right)^T.$$
Then, \eqref{apdx:NL:1} and \eqref{apdx:NL:buoyancy} from Lemma \ref{lemma:NL}  yields
$$ \Vert \G \Vert_{H^{s,k}} \leq C\left( \Vert \eta \Vert_{H^{s +1 ,k +1 }} + \Vert V \Vert_{H^{s +1 ,k +1}} + \sqrt{\mu}\Vert w \Vert_{H^{s +1 , k +1}} \right)$$
for $s \geq s_0+\frac32$ and $1 \leq k \leq s$. We thus get the existence and uniqueness of the pressure $P$, as well as the bound
$$ \Vert \gradphi[\mu] P \Vert_{H^{s_0+\frac32,k}} \leq C\left( \Vert \eta \Vert_{H^{s_0+\frac52,k +1 }} + \Vert V \Vert_{H^{s_0+\frac52,k +1 }} + \sqrt{\mu}\Vert w \Vert_{H^{s_0+\frac52,k +1 }}\right),$$
for $0 \leq k \leq s_0+\frac32$.\\
Because $\gradphi \cdot (\vec{\Ub + \epsilon U})=0$, the pressure $P$ also satisfies \eqref{eqn:elliptic:ordre1}, with the boundary condition \eqref{eqn:elliptic:bc}. We use Proposition \ref{lemma:elliptic:2} with 
$$\Gun := \frac{\sqrt{\mu}}{\epsilon} \NLdeux,\qquad
		\Gdeux := (0,b)^T.$$
Using also \eqref{apdx:NL:1} and \eqref{apdx:NL:buoyancy} from Lemma \ref{lemma:NL}, we directly get \eqref{eqn:elliptic:high_reg}, for $s \geq s_0+\frac52$, $3 \leq k \leq s$.
\subsubsection{Proof of Proposition \ref{lemma:elliptic:high_reg:perte}}
We start by defining a pressure, reminiscent of the hydrostatic pressure
$$P_h(t,x,r) := \int_0^r \varrho(r') b(t,x,r')dr',$$
so that 
$$ \frac{1}{\varrho} \partial_r^{\varphi}P_h + b = \epsilon \frac{h}{1+\epsilon h} b. $$
By the product estimate \eqref{apdx:pdt:infty} and the estimate \eqref{apdx:NL:buoyancy} on $b$ in Lemma \ref{lemma:NL}, we readily get 
$$ \Vert \gradphi P_h \Vert_{H^{s,k}} \leq C \Vert \eta \Vert_{H^{s+1,k+1}}.$$
Notice that there is a loss of derivative, but the estimate is uniform in $\mu$.\\
We now define $P_{\nh} := P - P_h$, and look for an estimate on $\Vert \gradphi P_{\nh} \Vert_{H^{s,k}}$ uniform in $\mu$. Together with the estimate on $P_h$, it then shows the desired estimate. \\
Plugging the previous relation into \eqref{eqn:elliptic} yields 
$$ \begin{aligned} (1+\epsilon h) \gradphi[\mu] \cdot \frac{1}{\varrho} \gradphi[\mu] P_{\nh}  &= - \frac{\mu}{\epsilon} (1+\epsilon h) \NLdeux \nouveau{+}  \epsilon \partial_r \left( b\frac{h}{1+\epsilon h} \right) - \mu \nouveau{(1+\epsilon h)}\gradphi[x] \cdot \frac{1}{\varrho} \gradphi[x] P_h, \end{aligned}$$
with the boundary condition
$$ \frac{1}{\varrho} \left. \partial_{r}^{\varphi} P_{\nh}\right|_{r=0; r=1} = \left. - \epsilon b \frac{h}{1+\epsilon h} \right|_{r=0; r=1}.$$
Using Proposition \ref{lemma:elliptic:2}, and \eqref{apdx:NL:2} (allowing a better dependency in $\mu$, at the price of a loss of derivatives) and \eqref{apdx:NL:buoyancy} from Lemma \ref{lemma:NL}, we get
$$ \Vert \gradphi[\mu] P_{\nh} \Vert_{H^{s,k}} \leq (\sqrt{\mu} + \epsilon) C\left( \Vert \eta \Vert_{H^{s+1,k+1}} + \Vert V \Vert_{H^{s+1,k+1}} + \sqrt{\mu}\Vert w \Vert_{H^{s+1,k+1}} \right).$$
Recalling that $\gradphi[\mu] := \begin{pmatrix}
\sqrt{\mu} \gradphi[x] \\ \partial_r^{\varphi}
\end{pmatrix}$ yields the desired estimate, after dividing by $\sqrt{\mu}$ and using \eqref{hyp:epsilonmu}.
\section{A priori energy estimates}
\label{section:energy}
The goal of this section is to derive energy estimates for smooth solutions of \eqref{eqn:euler_slag}.
In Subsection \ref{subsection:functional}, we define the energy functional that we will use. One main concern is that, roughly speaking, we only control $\sqrt{\mu} w$ in the energy. Therefore, any source term containing a factor $w$ has to either cancel out (with a symmetry argument, see the proof of Proposition \ref{lemma:energy}), or be multiplied by a factor $\sqrt{\mu}$. 

In Subsection \ref{subsection:energy_low_reg}, we derive a first energy estimate, with loss of derivative. In Subsection \ref{subsection:energy_high_reg}, we use Alinhac's good unknown to improve this result into an energy estimate without loss of derivatives. Note that this improved estimate relies on the one derived in Subsection \ref{subsection:energy_low_reg}, for the low regularity case $ s \leq s_0+\frac32$.

\subsection{The energy functional}
\label{subsection:functional}
We define the energy that we will use in the following definition. Note that there is a low regularity part and a high regularity part.
\begin{definition}
\label{def:energy}
\newcommand{\diff}{\mathbb{\Lambda}}
Let $s_0>d/2$, $s \in \N$ with $s \geq s_0+\frac32$. Let $(V,w,\eta) \in C^0([0,T), (H^{s})^{d+2})$ for a certain $T>0$. \\
Let 
\begin{equation}
\label{eqn:def:D}
\mathcal{D} := \{ \Lambda^{s-l} \partial_r^{l}, 1 \leq l \leq s \} \cup \{|D|^2 \Lambda^{s-2}\}
\end{equation}
be the set of operators that we will use in order to define the energy functional. \\
We define, for $0 \leq t <T$:  

$$  \Elow(t) := \Vert V(t,\cdot)\Vert_{H^{s_0+\frac32,2}}^2  + \mu \Vert w(t,\cdot) \Vert_{H^{s_0+\frac32,2}}^2 + \Vert \eta(t,\cdot)\Vert_{H^{s_0+\frac32,2}}^2,$$
and \nouveau{(assuming \eqref{hyp:stable} and \eqref{hyp:h:elliptic})}
\begin{equation}
\label{eqn:energy_def}
 \E(t) := \Elow + \sum_{ \diff \in \mathcal{D}} \Vert \sqrt{\varrho (1+\epsilon h) }V^{\diff}(t,\cdot)\Vert_{L^2}^2  + \mu \Vert\sqrt{\varrho (1+\epsilon h)} w^{\diff}(t,\cdot) \Vert_{L^2}^2  + \Vert  \sqrt{\varrho'} \diff \eta(t,\cdot)\Vert_{L^2}^2, 
\end{equation}
where $V^{\diff}$ and $w^{\diff}$ are Alinhac's good unknowns associated with the operator $\diff$, defined in \eqref{apdx:alinhac:definition}.
\end{definition}
In the next lemma we prove that this energy is equivalent to the $H^{s}$-norm of the unknowns $(V,w,\eta)$.
\begin{lemma}
\label{lemma:equiv_energy}
\newcommand{\diff}{\mathbb{\Lambda}}
Let $s_0>d/2$, $s \in \N$ with $s \geq s_0+\frac32$. Let $(V,w,\eta) \in C^0([0,T), (H^{s})^{d+2})$ for a certain $T>0$. Recall the definition \eqref{eqn:def:D} of $\mathcal{D}$. We make the assumptions \eqref{hyp:non_cavitation}, \eqref{hyp:stable}, \eqref{hyp:h:elliptic} as well as 
$$\Vert \eta \Vert_{H^{s}} \leq M,$$
for $M > 0$ a constant. 
Then there exists a constant $C>0$ depending only on $M$ such that
$$ \frac{1}{C} \left( \Vert V \Vert_{H^{s}}^2 + \mu \Vert w \Vert_{H^{s}}^2 + \Vert \eta \Vert_{H^{s}}^2 \right) \leq \E \leq C \left( \Vert V \Vert_{H^{s}}^2 + \mu \Vert w \Vert_{H^{s}}^2 + \Vert \eta \Vert_{H^{s}}^2 \right). $$
\end{lemma}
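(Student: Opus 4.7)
Since the weights $\sqrt{\varrho(1+\epsilon h)}$ and $\sqrt{\varrho'}$ are bounded above and below by constants depending only on the problem data ($\rho_0,\rho_1$, $c_*$ via \eqref{hyp:stable}, and $h_*, h^*$ via \eqref{hyp:h}, together with $M$ to control $1+\epsilon h$ from above), up to multiplicative constants the energy $\mathcal{E}$ is equivalent to
$$\mathcal{E}_0+\sum_{\mathbb{\Lambda}\in\mathcal{D}}\bigl(\Vert V^{\mathbb{\Lambda}}\Vert_{L^2}^2+\mu\Vert w^{\mathbb{\Lambda}}\Vert_{L^2}^2+\Vert \mathbb{\Lambda}\eta\Vert_{L^2}^2\bigr).$$
My plan is to compare, for each $\mathbb{\Lambda}\in\mathcal{D}$, the Alinhac good unknown $V^{\mathbb{\Lambda}}$ with the plain derivative $\mathbb{\Lambda} V$ (and similarly for $w$), and then to reconstruct the $H^s$ norm from the family $\mathcal{D}$ together with the low-regularity piece $\mathcal{E}_0$.

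\textbf{The key perturbation estimate.} From $V^{\mathbb{\Lambda}}=\mathbb{\Lambda} V+\tfrac{\mathbb{\Lambda}\eta}{1+\epsilon h}\partial_r V$ and the triangle inequality, everything reduces to the bound
$$\Bigl\Vert \tfrac{\mathbb{\Lambda}\eta}{1+\epsilon h}\partial_r V\Bigr\Vert_{L^2}\leq \tfrac{1}{h_*}\Vert \mathbb{\Lambda}\eta\Vert_{L^2}\,\Vert \partial_r V\Vert_{L^\infty}.$$
Each $\mathbb{\Lambda}\in\mathcal{D}$ is of order $s$, so $\Vert \mathbb{\Lambda}\eta\Vert_{L^2}\leq \Vert \eta\Vert_{H^s}$; and the anisotropic Sobolev embedding $H^{s_0+1/2,1}(S_r)\hookrightarrow L^\infty(S_r)$ (valid since $s_0>d/2$) yields $\Vert \partial_r V\Vert_{L^\infty}\leq C\Vert V\Vert_{H^{s_0+3/2,2}}$. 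With this single estimate both directions of the equivalence follow. For the upper bound, I would use $\Vert V\Vert_{H^{s_0+3/2,2}}\leq \Vert V\Vert_{H^s}$ (valid since $s\geq s_0+3/2$) to obtain $\Vert V^{\mathbb{\Lambda}}\Vert_{L^2}\leq C(M)\Vert V\Vert_{H^s}$, square, sum over $\mathcal{D}$, and combine with the trivial bound $\mathcal{E}_0\leq C(\Vert V\Vert_{H^s}^2+\mu\Vert w\Vert_{H^s}^2+\Vert \eta\Vert_{H^s}^2)$. For the lower bound, I would instead use $\Vert \eta\Vert_{H^s}\leq M$ and $\Vert V\Vert_{H^{s_0+3/2,2}}^2\leq \mathcal{E}_0\leq \mathcal{E}$ to get $\Vert \mathbb{\Lambda} V\Vert_{L^2}^2\leq 2\Vert V^{\mathbb{\Lambda}}\Vert_{L^2}^2+C(M)\mathcal{E}$. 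The treatment of $w$ is identical up to factors of $\mu$, and the $\eta$ contribution is already present in $\mathcal{E}$ without modification.

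\textbf{Reconstruction of the $H^s$ norm.} The remaining step is the purely linear fact
$$\sum_{\mathbb{\Lambda}\in\mathcal{D}}\Vert \mathbb{\Lambda} f\Vert_{L^2}^2+\Vert f\Vert_{H^{s_0+3/2,2}}^2\sim \Vert f\Vert_{H^s(S_r)}^2.$$
Writing $\Vert f\Vert_{H^s}^2=\sum_{l=0}^s\Vert \Lambda^{s-l}\partial_r^l f\Vert_{L^2}^2$, the contributions with $l\geq 1$ are exactly the operators $\Lambda^{s-l}\partial_r^l\in\mathcal{D}$. The remaining term $\Vert \Lambda^s f\Vert_{L^2}^2$ splits, by Plancherel in the horizontal variable, as $\Vert |D|\Lambda^{s-1}f\Vert_{L^2}^2+\Vert \Lambda^{s-1}f\Vert_{L^2}^2$; the first is in $\mathcal{D}$, while the second is dominated by $C(\Vert f\Vert_{L^2}^2+\Vert |D|\Lambda^{s-1}f\Vert_{L^2}^2)$ through a frequency split at $|\xi|=1$. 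Both are controlled by the right-hand side.

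\textbf{Main obstacle.} The only genuinely delicate point is the need, in the lower bound, to control $\Vert \partial_r V\Vert_{L^\infty}$ by a quantity depending only on $M$ and not on the full $H^s$ norm one is trying to estimate. This is precisely the role of $\mathcal{E}_0$ in the definition of the energy: it provides the low-regularity Sobolev embedding reservoir that keeps the argument from being circular.
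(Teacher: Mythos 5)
Your proposal is correct and follows essentially the same route as the paper: the paper's proof of this lemma consists of noting the boundedness of the weights $\varrho(1+\epsilon h)$ and $\varrho'$ and invoking the equivalence \eqref{apdx:alinhac:basic} of Lemma \ref{lemma:alinhac:equiv}, whose proof is exactly your perturbation argument (bound the Alinhac correction $\tfrac{\mathbb{\Lambda}\eta}{1+\epsilon h}\partial_r f$ in $L^2$ by $\Vert\eta\Vert_{H^s}$ times a low-regularity norm of $f$ via the product estimate \eqref{apdx:pdt:tame}) combined with the recovery of the missing $\Vert\Lambda^{s-1}f\Vert_{L^2}$ piece from $|D|\Lambda^{s-1}$ and the low-frequency reservoir $\Elow$. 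Your frequency-split reconstruction of $\Vert\Lambda^s f\Vert_{L^2}$ is just a more explicit version of the paper's appeal to interpolation at that step.
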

\begin{proof}
Recall that $H^s(S_r) = H^{s,k}(S_r)$ when $k = s \in \N$. The assumptions \eqref{hyp:non_cavitation}, \eqref{hyp:stable}, \eqref{hyp:h:elliptic} yield the existence of a constant $c > 0$ such that $ \frac{1}{c} \leq \varrho (1+\epsilon h) \leq c$ and $\frac{1}{c} \leq \varrho' \leq c$. Using this, as well as \eqref{apdx:alinhac:basic} on $V$ and $\sqrt{\mu} w$ yields the result.
\end{proof}

\subsection{Estimates with loss of regularity}
\label{subsection:energy_low_reg}
The main result of this subsection is the energy estimate with loss of derivatives in Proposition \ref{lemma:energy_low_reg}. Note that in the estimate of Proposition \ref{lemma:energy_low_reg}, the functional $\Elow$ only controls $H^{s_0+\frac32}$-norms of the unknowns, whereas it is bounded by a constant depending on the $H^{s_0+\frac52}$-norms of the unknowns.\\

{
\newcommand{\diff}{\mathbb{\Lambda}}
Let   $ \diff$ is a differential operator of the form $\Lambda^{s-l} \partial_r^l$ for $0 \leq l \leq s$. Applying $\diff$ to \eqref{eqn:euler_slag} formally yields

\begin{equation}
\label{eqn:euler_isopycnal_quasilin_low_reg}
\sys{ \partial_t \dot{V}  &= \Run, & (1)\\
\mu \partial_t \dot{w}  &=  \Rdeux, & (2) \\
\partial_t \dot{\eta}  &= \Rtrois,  & (3) \\}
\end{equation}
where $(\dot{V}, \dot{w}, \dot{\eta}) := \diff (V,w,\eta)$ and
$$\begin{aligned}
 \Run &:=   - \diff \left( \left(\Vb + \epsilon V \right)\cdot \nabla_x V + \frac{1}{\varrho} \nabla_x^{\varphi} P \right), \qquad 
 \Rtrois := - \diff \left( \left(\Vb + \epsilon V \right) \cdot \nabla_x \eta \nouveau{-} w \right),\\
 \Rdeux &:= - \mu \diff \left( \left( \Vb + \epsilon V\right) \cdot \nabla_x w \right) - \diff \left( \frac{1}{\varrho} \partial_r^{\varphi}P + b \right). \end{aligned}$$
We make the assumptions that the unknowns $(V,w,\eta)$ satisfy
\begin{hyp}
\label{hyp:Mzero:low_reg}
\Vert \eta(t,\cdot) \Vert_{H^{s_0+\frac52}} + \Vert V(t,\cdot) \Vert_{H^{s_0+\frac52}} + \sqrt{\mu} \Vert w(t,\cdot) \Vert_{H^{s_0+\frac52}} \leq M_0,
\end{hyp}with $M_0 > 0$ a constant. \\
We also make the assumption that the stratification $\varrho$ and the shear flow $\Vb$ satisfy
\begin{hyp}
\label{hyp:Mb:low_reg}
		 | \varrho|_{W_r^{s_0+\frac32,\infty}} + | \Vb |_{W_r^{s_0+\frac52, \infty}} \leq \Mb,
\end{hyp}with $\Mb > 0$ a constant.
}We now state the energy estimates with a loss of derivatives.
\begin{proposition}
\label{lemma:energy_low_reg}
\newcommand{\diff}{\mathbb{\Lambda}}
Let $s_0 > d/2$. We assume that there exist $T>0$ and $(V,w,\eta) \in C^0([0,T), (H^{s_0+\frac52})^{d+2})$ satisfying \eqref{eqn:euler_slag} as well as \eqref{hyp:non_cavitation}, \eqref{hyp:stable}, \eqref{hyp:h:elliptic}, \eqref{hyp:Mzero:low_reg}, \eqref{hyp:Mb:low_reg}.
Then there exists $C > 0$ a constant depending only on $M_0,\Mb$ such that
$$\frac{d}{dt} \Elow(t) \leq C\left( \Vert \eta(t,\cdot) \Vert_{H^{s_0+\frac52}} + \Vert V(t,\cdot) \Vert_{H^{s_0+\frac52}} + \sqrt{\mu} \Vert w(t,\cdot) \Vert_{H^{s_0+\frac52}} \right)^{2}.$$
\end{proposition}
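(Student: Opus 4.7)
Set $s := s_0 + \frac{3}{2}$, the regularity at which $\Elow$ is defined. The plan is to apply to the system \eqref{eqn:euler_slag} each operator $\mathbb{\Lambda} := \Lambda^{s-l}\partial_r^l$ for $l \in \{0, 1, 2\}$, obtaining the quasi-linear system \eqref{eqn:euler_isopycnal_quasilin_low_reg} satisfied by $(\dot V, \dot w, \dot \eta) := \mathbb{\Lambda}(V,w,\eta)$. Differentiating the corresponding $L^2$-norms (with $\mu$-weight on $\dot w$) in time and summing over $l$ yields
\[
\frac{d}{dt}\Elow = 2\sum_{l=0}^{2}\bigl( \langle \dot V, R_1\rangle + \langle \dot w, R_2\rangle + \langle \dot\eta, R_3\rangle \bigr)_{L^2(S_r)},
\]
where the $\mu$-factor is already absorbed into $R_2$ since the $w$-equation in \eqref{eqn:euler_slag} carries it. The aim is then to bound each inner product by the square of $\Vert \eta\Vert_{H^{s_0+5/2}} + \Vert V\Vert_{H^{s_0+5/2}} + \sqrt{\mu}\Vert w\Vert_{H^{s_0+5/2}}$, using the tools of Appendix \ref{appendixA} together with the pressure estimates of Section \ref{section:pressure}.

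Each inner product is treated by Cauchy--Schwarz. For the transport terms $\mathbb{\Lambda}((\Vb + \epsilon V)\cdot \nabla_x \cdot)$, I would split $\mathbb{\Lambda}(U\cdot \nabla_x f) = U\cdot \nabla_x \dot f + [\mathbb{\Lambda}, U\cdot \nabla_x]f$: integration by parts on the principal part gives $-\tfrac{1}{2}\int (\nabla_x \cdot U)|\dot f|^2$, which reduces to $-\tfrac{\epsilon}{2}\int (\nabla_x \cdot V)|\dot f|^2$ since $\nabla_x \Vb = 0$, and is controlled via Sobolev embedding ($s_0 > d/2$); the commutator is bounded by \eqref{apdx:pdt:tame} and \eqref{apdx:commutator:infty}. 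The pressure contribution $\mathbb{\Lambda}(\frac{1}{\varrho}\gradphi P)$ is bounded in $L^2$ by the elliptic estimate of Proposition \ref{cor:hydro_low_reg} applied to $\Vert \gradphi P\Vert_{H^{s_0+3/2, 2}}$---this is precisely where the loss of one derivative on $\eta$ enters the RHS. The buoyancy $\mathbb{\Lambda} b$ is controlled via the composition estimate (Lemma \ref{lemma:NL}) by $C(\Mb)\Vert \eta\Vert_{H^{s_0+3/2, 2}}$.

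The hard part will be the coupling $\langle \dot\eta, \mathbb{\Lambda} w\rangle$ coming from the $w$-source in the $\eta$-equation: since the $w$-contribution to $\Elow$ carries the factor $\mu$, but $\mathbb{\Lambda} w$ on its own does not, a crude Cauchy--Schwarz bound would contribute a non-uniform factor $\mu^{-1/2}$. To preserve uniformity in $\mu$, I plan to use the incompressibility condition $\partial_r^{\varphi} w = -\nabla_x^{\varphi} \cdot V - \epsilon^{-1}\gradphi[x] \cdot \Vb$ from \eqref{eqn:euler_slag} (whose apparent singularity cancels because $\nabla_x \Vb = 0$), combined with the Dirichlet conditions $w\rvert_{r=0,1}=0$ and Poincaré's inequality, to bound $\Vert \mathbb{\Lambda} w\Vert_{L^2}$ uniformly in $\mu$ by a sum of norms of $V$ and $\eta$---this is essentially the content of Appendix \ref{appendixC}. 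An analogous symmetric integration by parts, pairing the $V$- and $w$-pressure contributions against the incompressibility of $(\dot V, \dot w)$, handles the $\mu$-sensitive pressure/buoyancy coupling in $\langle \dot w, R_2\rangle$. Collecting all contributions and applying Young's inequality $ab \leq \frac{1}{2}(a^2+b^2)$ converts the cross-terms into the claimed squared bound, with the constant depending only on $M_0$ and $\Mb$.
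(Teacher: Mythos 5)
Your proposal is correct and follows the same standard strategy the paper invokes (the paper omits this proof and refers to \cite{Desjardins2019} and \cite{Duchene2022}): apply $\Lambda^{s_0+\frac32-l}\partial_r^l$ for $l\in\{0,1,2\}$, bound the resulting source terms with the product/commutator/composition estimates of Appendix \ref{appendixA} and the low-regularity elliptic estimates, and use Lemma \ref{apdx:lemma:w_trick} to trade the missing $\sqrt{\mu}$ in front of $w$ for the one derivative that the statement allows you to lose. One small warning: the ``symmetric integration by parts'' you invoke for the pressure part of $\langle \dot w, R_2\rangle$ is not available for the operators $\Lambda^{s-l}\partial_r^l$ with $l\geq 1$ (the traces of $\partial_r^l w$ at $r=0,1$ need not vanish, so the boundary terms do not drop), but it is also unnecessary here --- the same Lemma \ref{apdx:lemma:w_trick} bound on $\Vert \mathbb{\Lambda} w\Vert_{L^2}$, combined with Cauchy--Schwarz, Proposition \ref{cor:hydro_low_reg} for the pressure and Lemma \ref{lemma:NL} for $b$, closes that term with the admissible loss of one derivative.
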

The proof consists in estimating the terms $\Run,\Rdeux,\Rtrois$ with product, commutators and composition estimates, and Proposition \ref{lemma:elliptic_high_reg}. This is a standard strategy and we refer to \cite[Theorem 1]{Desjardins2019} and \cite[Lemmas 4.5 and 4.6]{Duchene2022} where very similar equations are thoroughly studied.
\subsection{Estimates without loss of regularity}
\label{subsection:energy_high_reg}
The main result of this subsection is the energy estimates without loss of derivatives provided in Proposition \ref{lemma:energy}. As opposed to the estimate in Proposition \ref{lemma:energy_low_reg}, the following energy estimate is without loss of derivatives. However, it requires the additional assumption $ |\Vb'|_{L^{\infty}} \leq \sqrt{\mu}$, see Remark \ref{rk:trick} for a more detailed comparison. The proof relies on the use of Alinhac's good unknowns. Before proving the proposition, we derive the quasilinearized system satisfied by Alinhac's good unknowns in Lemma \ref{lemma:quasilin}.\\

In what follows we assume 
\begin{hyp}
\label{hyp:M:high_reg}
\Vert \eta(t,\cdot) \Vert_{H^{s}} + \Vert V(t,\cdot) \Vert_{H^{s}} + \sqrt{\mu} \Vert w(t,\cdot) \Vert_{H^{s}} \leq M,
\end{hyp}
for $t \in [0,T]$ for some $T > 0$ and with $M > 0$ a constant. 
\begin{proposition}
\newcommand{\diff}{\mathbb{\Lambda}}
\label{lemma:energy}
Let $s_0 > d/2$, $s \in \N$ with $s \geq s_0 + \frac52$.  We make the assumption $\epsilon \leq \sqrt{\mu}$ as well as 
$$ |\Vb'|_{L^{\infty}} \leq \sqrt{\mu}. $$
Assume that there exist $T>0$ and $(V,w,\eta) \in C^0([0,T), (H^{s})^{d+2})$ satisfying \eqref{eqn:euler_slag} with the boundary conditions \eqref{eqn:euler_slag:bc} as well as  \eqref{hyp:non_cavitation}, \eqref{hyp:Mb}, \eqref{hyp:stable}, \eqref{hyp:h:elliptic}, \eqref{hyp:M:high_reg}. \\
Then there exists $C > 0$ a constant depending only on $\Mb,M$ such that 
$$\frac{d}{dt} \E(t) \leq C \E(t).$$
\end{proposition}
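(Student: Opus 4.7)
The plan is to derive an energy estimate without loss of derivatives by introducing, for each differential operator $\mathbb{\Lambda} \in \mathcal{D}$ appearing in the definition of $\E$, the Alinhac good unknowns $(V^{\mathbb{\Lambda}}, w^{\mathbb{\Lambda}}, \mathbb{\Lambda}\eta)$. The commutation identity \eqref{apdx:alinhac:comm1} will allow me to commute $\mathbb{\Lambda}$ past the nonlinear operators $\gradphi[x]$ and $\partial_r^{\varphi}$ without losing a derivative on $\eta$. First I would establish a quasilinearization lemma (Lemma \ref{lemma:quasilin}) stating that the good unknowns solve
\begin{equation*}
\sys{
\partial_t V^{\mathbb{\Lambda}} + (\Vb + \epsilon V) \cdot \nabla_x V^{\mathbb{\Lambda}} + \frac{1}{\varrho} \gradphi[x] (\mathbb{\Lambda} P) &= \Rcinq, \\
\mu \bigl( \partial_t w^{\mathbb{\Lambda}} + (\Vb + \epsilon V) \cdot \nabla_x w^{\mathbb{\Lambda}} \bigr) + \frac{1}{\varrho} \partial_r^{\varphi} (\mathbb{\Lambda} P) + \mathbb{\Lambda} b &= \Rsix, \\
\partial_t (\mathbb{\Lambda} \eta) + (\Vb + \epsilon V) \cdot \nabla_x (\mathbb{\Lambda} \eta) - w^{\mathbb{\Lambda}} &= \Rquatre,
}
\end{equation*}
together with the boundary conditions $w^{\mathbb{\Lambda}}|_{r=0,1} = 0$ and a commuted incompressibility condition, where the remainders $\Rquatre, \Rcinq, \Rsix$ are controlled in $L^2$ by $\sqrt{\E(t)}$ thanks to product, commutator and composition estimates of Appendix \ref{appendixA}, the Alinhac remainder bounds of Appendix \ref{appendixB}, Lemma \ref{lemma:NL} for the buoyancy, and Proposition \ref{lemma:elliptic_high_reg} for the pressure (the assumption $\epsilon \leq \sqrt{\mu}$ being needed here for $\mu$-uniformity).

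Next I would compute $\frac{d}{dt}\E(t)$. The low-regularity part $\frac{d}{dt}\Elow(t)$ is bounded directly by Proposition \ref{lemma:energy_low_reg}. For each $\mathbb{\Lambda} \in \mathcal{D}$, I would differentiate the three weighted $L^2$-norms in \eqref{eqn:energy_def}. The time derivative of the weight $\varrho(1 + \epsilon h)$ is rewritten using the equation on $\eta$ and absorbed into $\E$ after integration by parts. The transport terms are integrated by parts against the incompressibility of $\Vb + \epsilon V$ recalled in \eqref{hyp:wp}, which produces only harmless contributions. The key step is the treatment of the pressure contributions: using Lemma \ref{apdx:IPP} together with the boundary conditions $w^{\mathbb{\Lambda}}|_{r=0,1} = 0$ and the Neumann condition \eqref{eqn:elliptic:bc}, the two pressure terms combine as
\begin{equation*}
\int V^{\mathbb{\Lambda}} \cdot \gradphi[x] (\mathbb{\Lambda} P) + \mu \int w^{\mathbb{\Lambda}} \partial_r^{\varphi} (\mathbb{\Lambda} P) = -\int (\mathbb{\Lambda} P) \bigl( \gradphi[x] \cdot V^{\mathbb{\Lambda}} + \partial_r^{\varphi} w^{\mathbb{\Lambda}} \bigr),
\end{equation*}
which is bounded by $C\,\E(t)$ using the commuted incompressibility condition and Proposition \ref{lemma:elliptic_high_reg}.

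The symmetric cancellation referred to as step $(iii)$ in the statement concerns the coupling between the buoyancy contribution $\mu \int w^{\mathbb{\Lambda}}\, \mathbb{\Lambda} b$ and the $\eta$-contribution $\int \varrho'\, \mathbb{\Lambda}\eta\, \partial_t (\mathbb{\Lambda}\eta)$ obtained when differentiating $\|\sqrt{\varrho'}\,\mathbb{\Lambda}\eta\|_{L^2}^2$ in time. Exploiting $b = \frac{\varrho'}{\varrho}\eta + O(\epsilon)$ (made rigorous by Lemma \ref{lemma:NL}) together with the quasilinearized $\eta$-equation, the leading-order terms $\int \varrho'\, w^{\mathbb{\Lambda}}\, \mathbb{\Lambda}\eta$ arising on both sides cancel exactly, leaving only remainders bounded by $C\,\E(t)$. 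This is precisely where the ansatz chosen in the introduction pays off: it preserves the symmetric structure that is apparent in Eulerian coordinates.

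The main obstacle I anticipate is the control of the commutators involving the shear flow $\Vb'$. When $\mathbb{\Lambda}$ contains vertical derivatives, commuting it with the transport operator $(\Vb + \epsilon V) \cdot \nabla_x$ produces cross terms proportional to $\Vb'$ that couple $V^{\mathbb{\Lambda}}$ with quantities ultimately involving $w$ via the incompressibility. Since only $\sqrt{\mu}\, w$ is controlled in $\E$, extracting the required $\sqrt{\mu}$ prefactor from these cross terms relies exactly on the hypothesis $|\Vb'|_{L^{\infty}} \leq \sqrt{\mu}$. Combined with $\epsilon \leq \sqrt{\mu}$, used to obtain the elliptic estimate, all such contributions are controlled by $C\,\E(t)$, and the differential inequality $\frac{d}{dt}\E(t) \leq C\,\E(t)$ follows.
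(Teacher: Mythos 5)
Your overall architecture (quasilinearize with Alinhac's good unknowns, test against weighted good unknowns, exploit the $b\approx\frac{\varrho'}{\varrho}\eta$ symmetry with the $\eta$-equation, use the incompressibility to handle the pressure) matches the paper. But there is one genuine gap that breaks the argument as written: you assert the boundary condition $w^{\mathbb{\Lambda}}|_{r=0,1}=0$ for \emph{every} $\mathbb{\Lambda}\in\mathcal{D}$ and then integrate the pressure term by parts uniformly. This is false for the operators $\mathbb{\Lambda}=\Lambda^{s-k}\partial_r^k$ with $k\geq 1$: only $w$ itself vanishes at $r=0,1$, not its vertical derivatives (nor $\partial_r^k\eta$), so $w^{\mathbb{\Lambda}}$ has a nonzero trace and your integration by parts produces boundary integrals $\int_{r=0;1}\mathbb{\Lambda}P\,w^{\mathbb{\Lambda}}$ that cannot be absorbed into $\E$. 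Forcing these traces to vanish is exactly the ``good preparation'' hypothesis of Desjardins--Lannes--Saut, which Theorem \ref{thm:wp} deliberately avoids. The paper's proof therefore splits into two cases: for $\mathbb{\Lambda}=|D|\Lambda^{s-1}$ the trace does vanish and one integrates by parts (the residual $\frac{1}{\epsilon}\int\nabla_x^{\varphi}P^{\mathbb{\Lambda}}\cdot\Vb^{\mathbb{\Lambda}}$ with $\Vb^{\mathbb{\Lambda}}=\epsilon\frac{\mathbb{\Lambda}\eta}{1+\epsilon h}\Vb'$ is where $|\Vb'|_{L^\infty}\leq\sqrt{\mu}$ is actually consumed, since the elliptic estimate only controls $\sqrt{\mu}\,\gradphi[x]P^{\mathbb{\Lambda}}$); for $k\geq 1$ one does \emph{not} integrate by parts at all, but instead uses that $w^{\mathbb{\Lambda}}$ only involves $\partial_r w$, which the incompressibility condition expresses through $V$ and $\eta$, yielding $\Vert w^{\mathbb{\Lambda}}\Vert_{L^2}\leq C(\Vert V\Vert_{H^s}+\Vert\eta\Vert_{H^s})$ with no $\mu$-loss, so that Cauchy--Schwarz applied directly to $\int\nabla^{\varphi}P^{\mathbb{\Lambda}}\cdot U^{\mathbb{\Lambda}}(1+\epsilon h)$ closes the estimate. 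Without this case distinction your proof does not go through.

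Two smaller inaccuracies to fix. First, your quasilinearized system carries $\gradphi[x](\mathbb{\Lambda}P)$ rather than $\gradphi[x]P^{\mathbb{\Lambda}}$: the difference between $\mathbb{\Lambda}(\gradphi[x]P)$ and $\gradphi[x](\mathbb{\Lambda}P)$ is a commutator involving $s$ derivatives of $\nabla_x\eta/(1+\epsilon h)$, i.e.\ precisely the derivative loss Alinhac's unknown is meant to remove; the good unknown must appear on the pressure as well. Second, the buoyancy term in the vertical momentum equation is not multiplied by $\mu$, so the cancellation in $(iii)$ is between $\int\frac{\varrho'}{\varrho}\mathbb{\Lambda}\eta\,w^{\mathbb{\Lambda}}\varrho(1+\epsilon h)$ and $-\int\varrho'\,w^{\mathbb{\Lambda}}\mathbb{\Lambda}\eta$ (leaving an $O(\epsilon)$ remainder), while the residual $\epsilon\mathbb{\Lambda}\tilde b$ paired with $w^{\mathbb{\Lambda}}$ is what consumes $\epsilon\leq\sqrt{\mu}$, since the energy only controls $\sqrt{\mu}\,w^{\mathbb{\Lambda}}$.
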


{
\newcommand{\diff}{\mathbb{\Lambda}}
Before proving Proposition \ref{lemma:energy}, let us first exhibit the quasilinear structure of the system. Let $s \geq s_0+\frac52$. Let   $ \diff$ be a differential operator of the form $ \diff := \Lambda^{s-k} \partial_r^k \text{ for }k \geq 1 \text{ or } |D|^2\Lambda^{s-2}$. Applying $\diff$ to \eqref{eqn:euler_slag} and using \eqref{apdx:alinhac:comm1} formally yields

\begin{equation}
\label{eqn:euler_isopycnal_quasilin}
\sys{ \partial_t V^{\diff} + \left(\Vb + \epsilon V \right)\cdot \nabla_x V^{\diff} + \frac{1}{\varrho} \nabla_x^{\varphi}P^{\diff} &= \epsilon \Run + \Rdeux, & (1)\\
\mu\left( \partial_t w^{\diff} + \left( \Vb + \epsilon V\right) \cdot \nabla_x w^{\diff} \right) + \frac{1}{\varrho} \partial_r^{\varphi}P^{\diff} + \frac{\varrho'}{\varrho} \dot{\eta}  &= F - \epsilon \diff \tilde{b} + \epsilon \Rtrois + \mu \Rquatre,  & (2) \\
\partial_t \dot{\eta} + \left(\Vb + \epsilon V \right) \cdot \nabla_x \dot{\eta} - w^{\diff} &= \Rcinq,  & (3) \\
\nabla_x^{\varphi} \cdot V^{\diff} + \partial_r^{\varphi}w^{\diff} &= \nouveau{-}\frac{\nabla_x \eta }{1+\epsilon h} \cdot \partial_r \Vb^{\diff} +\nouveau{ \Rsix}, & (4) }
\end{equation}
where $V^{\diff},w^{\diff}, \Vb^{\diff}, P^{\diff}$ are Alinhac's good unknowns defined through \eqref{apdx:alinhac:def}, $\dot{\eta} := \diff \eta$ and
\begin{equation}
\label{eqn:notation:quasilin}
\sys{
\tilde{b} &:= \frac{1}{\epsilon} \left( b - \frac{\varrho'}{\varrho} \eta \right), \\
 \Run &:=  \nouveau{-}\diff \eta \partial_r^{\varphi} \partial_t^{\varphi} V \nouveau{-}  \Ral{1}{\partial_t V} - (\Ub + \epsilon \vec{U})\cdot \left( \diff \eta \partial_r^{\varphi} \gradphi V + \Ral{1}{ \nabla V } \right) \nouveau{- \frac{1}{\varrho} \left(  \diff \eta \partial_r^{\varphi} \nabla_x^{\varphi} P + \Ral{1}{\nabla P}\right)}, \\
 \Rdeux &:= \nouveau{-}[\diff,\Ub + \epsilon \vec{U}] \cdot \gradphi V \nouveau{-} [\diff,\frac{1}{\varrho}]\gradphi[x] P, \\
 \Rtrois &:= \nouveau{-} \frac{1}{\varrho} \left( \diff \eta \partial_r^{\varphi} \partial_r^{\varphi} P + \Ral{1}{\nabla P} \right), \\
 \Rquatre &:= \nouveau{\nouveau{-} \epsilon \diff \eta \partial_r^{\varphi} \partial_t^{\varphi} w \nouveau{-} \epsilon \Ral{1}{\partial_t w} - \epsilon (\Ub + \epsilon \vec{U})\cdot \left( \diff \eta \partial_r^{\varphi} \gradphi w + \Ral{1}{\nabla w} \right)}\nouveau{-}[\diff,\Ub + \epsilon \vec{U} ]\cdot \gradphi w, \\
 \Rcinq &:= \nouveau{-}[\diff,\Vb + \epsilon V] \cdot \nabla_x \eta \nouveau{-} \epsilon \frac{\diff\eta}{1+\epsilon h} \partial_r w, \\
 \Rsix &:=  \nouveau{-\epsilon} \diff \eta\partial_r^{\varphi}(\nabla^{\varphi} \cdot \vec{U}) \nouveau{- \epsilon }\Ral{2}{\nabla  \vec{U}}  \nouveau{-} \diff \eta\partial_r^{\varphi}(\nabla^{\varphi} \cdot \Vb) - \Ral{2}{\nabla  \Vb}, \\
F &:=  \nouveau{-}[\diff,\frac{1}{\varrho}]\partial_r^{\varphi}P \nouveau{-} [\diff,\frac{\varrho'}{\varrho}]\eta.
}
\end{equation}
We justify this quasilinear form in the following lemma.

\begin{lemma}
\label{lemma:quasilin}
Let $s_0 > d/2$, $s \in \N$ with $s > s_0 + \frac52$. We make the assumption $\epsilon \leq \sqrt{\mu}$. Assume that there exists $T>0$ and $(V,w,\eta) \in C^0([0,T), (H^{s})^{d+2})$ satisfying \eqref{eqn:euler_slag} as well as \eqref{hyp:non_cavitation}, \eqref{hyp:Mb}, \eqref{hyp:stable}, \eqref{hyp:h:elliptic}, \eqref{hyp:M:high_reg}.

Then there exists $C > 0$ a constant depending only on $\Mb,M$ such that
$$\sys{ 
\Vert \tilde{b} \Vert_{H^{s}} &\leq C\left( \Vert \eta(t,\cdot) \Vert_{H^{s}} + \Vert V(t,\cdot) \Vert_{H^{s}} + \sqrt{\mu} \Vert w(t,\cdot) \Vert_{H^{s}} \right), \\
\Vert (\Run,\Rdeux,\Rtrois,\Rquatre,\Rcinq,\Rsix) \Vert_{L^2} &\leq C\left( \Vert \eta(t,\cdot) \Vert_{H^{s}} + \Vert V(t,\cdot) \Vert_{H^{s}} + \sqrt{\mu} \Vert w(t,\cdot) \Vert_{H^{s}} \right), \\
\Vert F \Vert_{L^2} &\leq C\left( \Vert \eta(t,\cdot) \Vert_{H^{s}} + \Vert V(t,\cdot) \Vert_{H^{s}} + \sqrt{\mu} \Vert w(t,\cdot) \Vert_{H^{s}} \right).
}$$
Moreover, if $\diff = |D|^2 \Lambda^{s-2}$ then $ F = 0$.
\end{lemma}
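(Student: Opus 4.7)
The plan is to bound each of $\tilde{b}$, $\Run,\ldots,\Rsix$ and $F$ separately by combining the product, commutator and composition estimates of Appendix \ref{appendixA}, the bounds on the Alinhac remainders $\Ral{1}{\cdot}$ and $\Ral{2}{\cdot}$ from Appendix \ref{appendixB}, the elliptic estimate of Proposition \ref{lemma:elliptic_high_reg}, and the nonlinear/buoyancy estimate of Lemma \ref{lemma:NL}. All constants will be generically denoted by $C$, depending only on $\Mb$ and $M$.

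Two preliminary bounds will be used throughout. First, since $s \geq s_0 + \tfrac52$ and $\epsilon \leq \sqrt{\mu}$, Proposition \ref{lemma:elliptic_high_reg} yields
$$\Vert \gradphi[\mu] P \Vert_{H^{s}} \leq C\bigl(\Vert \eta \Vert_{H^{s}} + \Vert V \Vert_{H^{s}} + \sqrt{\mu}\Vert w \Vert_{H^{s}}\bigr).$$
Second, the time derivatives $\partial_t^\varphi V$ and $\mu\,\partial_t^\varphi w$ appearing in $\Run$ and $\Rtrois$ are substituted out through \eqref{eqn:euler_phi} in favour of horizontal advection, the pressure gradient, and the buoyancy; combined with the tame product estimate and the previous inequality, this gives an $H^{s-1}$-control on $\partial_t^\varphi V$ and on $\mu\,\partial_t^\varphi w$ by the same right-hand side. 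For $\tilde b$, a Taylor expansion of $\rho_{\eq}$ around $\eta_{\eq}$ shows that $\tilde b$ depends smoothly and quantitatively on $\eta$ with no inverse power of $\epsilon$ (this is precisely the point of factoring out the linear term $(\varrho'/\varrho)\eta$ from $b$), and the composition estimate of Appendix \ref{appendixA} then furnishes the claimed $H^s$-bound on $\tilde b$.

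With these ingredients, each term of \eqref{eqn:notation:quasilin} is treated as follows. The nonlinear products of the form $\diff\eta\,\partial_r^\varphi G$ are handled by the tame product estimate \eqref{apdx:pdt:tame}; the Alinhac remainders $\Ral{1}{\cdot}$ and $\Ral{2}{\cdot}$ by \eqref{apdx:alinhac:R1}--\eqref{apdx:alinhac:R2}; the commutators $[\diff,\Ub + \epsilon \vec{U}]\cdot \gradphi V$ in $\Rdeux$, $[\diff,\Ub + \epsilon \vec{U}]\cdot \gradphi w$ in $\Rquatre$, and $[\diff,\Vb+\epsilon V]\cdot \nabla_x\eta$ in $\Rcinq$ by the commutator estimates of Appendix \ref{appendixA}. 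In the two commutators involving $w$, the $\Vb$-contribution enters through $\Vb'$, and the hypothesis $|\Vb'|_{L^\infty}\leq \sqrt{\mu}$ is what preserves the $\sqrt{\mu}$-weighting on $w$. Finally, since $\varrho$ and $\varrho'/\varrho$ depend only on $r$, the term $F = [\diff,\tfrac{1}{\varrho}]\partial_r^{\varphi}P + [\diff,\tfrac{\varrho'}{\varrho}]\eta$ is controlled by the commutator estimate \eqref{apdx:commutator:infty} together with the preliminary pressure bound; moreover, when $\diff = |D|\Lambda^{s-1}$ is a purely horizontal Fourier multiplier it commutes with any function depending only on $r$, so both commutators in $F$ vanish identically and $F=0$.

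The main obstacle is the careful tracking of the powers of $\sqrt{\mu}$ so as to obtain bounds uniform in $\mu$: the pressure estimate naturally controls $\gradphi[\mu] P = (\sqrt{\mu}\gradphi[x]P,\partial_r^\varphi P)^T$ rather than $\gradphi P$, and only $\sqrt{\mu}\,w$ is controlled by the functional $\E$. The smallness hypotheses $\epsilon \leq \sqrt{\mu}$ and $|\Vb'|_{L^\infty}\leq \sqrt{\mu}$ are precisely what allow the $\epsilon$- and $\Vb'$-factors arising in $\Rdeux$, $\Rtrois$ and $\Rquatre$ to be absorbed against this weighting without losing uniformity in $\mu$.
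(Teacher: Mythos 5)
Your overall strategy --- term-by-term estimation via the product/commutator/composition toolbox, the Alinhac remainder bounds, the elliptic estimate, and a Taylor expansion for $\tilde b$ --- is the same as the paper's, and your treatment of $\tilde b$, of the pure product and remainder terms, and of $F$ (including $F=0$ when the operator is $|D|\Lambda^{s-1}$) is correct. But there is a genuine gap in how you handle the terms where $w$ appears without a $\mu$-weight. The hypotheses only give $\sqrt{\mu}\,\Vert w\Vert_{H^{s}}\leq M$, yet several right-hand sides require unweighted control of $w$: the estimate of $\Ral{1}{\partial_t V}$ requires $\Vert\partial_t\eta\Vert_{H^{s-1}}$, and by the third equation of \eqref{eqn:euler_slag} this requires $\Vert w\Vert_{H^{s-1}}$; likewise the commutators $[\mathbb{\Lambda},\Ub+\epsilon\vec{U}]\cdot\gradphi w$ in $\Rtrois$ and $\Rquatre$ involve derivatives of $w$. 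Invoking $|\Vb'|_{L^\infty}\leq\sqrt{\mu}$ does not repair this: first, that hypothesis is not assumed in Lemma \ref{lemma:quasilin} (it only enters in Proposition \ref{lemma:energy}); second, the problematic $w$-dependence does not come through $\Vb'$ at all, but through the $-w$ in the $\eta$-equation and through $\gradphi w$ itself.

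The missing ingredient is Lemma \ref{apdx:lemma:w_trick}: the incompressibility constraint expresses $\partial_r w$ in terms of $V$, $\eta$ and $\Vb$, and hence bounds $\Vert w\Vert_{H^{s-1}}$ by $C\left(\Vert V\Vert_{H^{s}}+\Vert\eta\Vert_{H^{s}}\right)$ uniformly in $\mu$ --- one trades a derivative for the factor $\sqrt{\mu}$ --- and this is what closes the estimates of $\Run$, $\Rtrois$ and $\Rquatre$ in the paper. A related point you gloss over: for the commutator $[\mathbb{\Lambda},\tfrac{1}{\varrho}]\gradphi[x]P$ in $\Rdeux$ one needs a $\mu$-uniform bound on the \emph{unweighted} horizontal gradient of the pressure in $H^{s-1}$, whereas Proposition \ref{lemma:elliptic_high_reg} only controls $\sqrt{\mu}\,\gradphi[x]P$ at the top order; the paper instead uses the derivative-losing but $\mu$-uniform estimate \eqref{eqn:elliptic_high_reg:perte} (obtained by subtracting the hydrostatic pressure), which is affordable there because only $s-1$ derivatives of $P$ are required.
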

}
\begin{proof}

\newcommand{\diff}{\mathbb{\Lambda}}
First note that the gradient of pressure $\nabla^{\varphi} P$ is well defined and has the same regularity as $V$,$w$, and $\eta$ according to Proposition \ref{lemma:elliptic_high_reg}.\\
To obtain estimates on the unknowns, we apply   $ \diff := \Lambda^{s-k} \partial_r^k$  or $|D|^2 \Lambda^{s-2}$ to \eqref{eqn:euler_phi}.
Recall the definition \eqref{apdx:alinhac:def} of Alinhac's good unknowns $(V^{\diff},w^{\diff},P^{\diff})$. we now apply $\diff$ to \eqref{eqn:euler_phi}; using \eqref{apdx:alinhac:comm1} yields
\begin{equation}
\sys{ \partial_t^{\varphi} V^{\diff} + \left(\Ub + \epsilon \vec{U} \right)\cdot \gradphi V^{\diff} + \frac{1}{\varrho} \nabla_x^{\varphi}P^{\diff} &= \epsilon \Run + \Rdeux & (1)\\
\mu\left( \partial_t^{\varphi} w^{\diff} + \left( \Ub + \epsilon \vec{U} \right) \gradphi w^{\diff} \right) + \frac{1}{\varrho} \partial_r^{\varphi}P^{\diff} + \frac{\varrho'}{\varrho} \dot{\eta} &= F - \epsilon \diff \tilde{b} + \epsilon \Rtrois + \mu \Rquatre,  & (2) \\
\partial_t \dot{\eta} + \left(\Vb + \epsilon V \right) \cdot \nabla_x \dot{\eta} - w^{\diff} &= \Rcinq  & (3) \\
\nabla_x^{\varphi} \cdot V^{\diff} + \partial_r^{\varphi}w^{\diff} &= -\frac{\nabla_x \eta }{1+\epsilon h} \cdot \partial_r \Vb^{\diff} + \Rsix,, & (4) }
\end{equation}
with the notations defined in \eqref{eqn:notation:quasilin}. 
Note that using the semi-Lagrangian property \eqref{eqn:slag} of the isopycnal coordinates, we get exactly \eqref{eqn:euler_isopycnal_quasilin}. \\
In what follows, $c$ denotes a constant that only depends on $s_0,s$. For simplicity we write 
$$ \tilde{M} := \Vert \eta(t,\cdot) \Vert_{H^{s}} + \Vert V(t,\cdot) \Vert_{H^{s}} + \sqrt{\mu} \Vert w(t,\cdot) \Vert_{H^{s}}$$
so that we now bound the $H^{s}$-norm of $\tilde{b}$ and the $L^2$-norm of the other terms from above by $C \tilde{M}$.\\
\underline{\bf Estimate for $\tilde{b}$:}\\
Let us write 
$$F(r,y) := \frac{1}{\epsilon^2}\left(   1 - \frac{\varrho(r + y)}{\varrho}  - \frac{\varrho'(r)}{\varrho} y \right), $$
so that $\tilde{b} = F(r,\epsilon \eta)$. Now note that the assumed regularity on $\varrho$ implies that $F$ is at least $W^{3,\infty}$. \\
As $F(0) = \partial_y F = 0$, we can write, by Taylor-Laplace formula:
$$ F(r,y) = \frac{1}{\epsilon^2}\frac12 y^2 \int_0^1 \partial_y^2F(r,\alpha y) \alpha d\alpha .$$
Thus the composition estimate \cite[Lemma A.5]{Duchene2022} yields
$$ \Vert \tilde{b} \Vert_{H^{s}} \leq C \Vert \eta \Vert_{H^{s}} \leq C\tilde{M}.$$
Note in particular that there is no singularity in $\epsilon$.\\
\underline{\bf Estimate for $\Run$:}\\
For the first and third terms, we write
$$ \diff \eta \partial_r^{\varphi} \partial_t^{\varphi} V + (\Ub + \epsilon \vec{U})\cdot  \diff \eta \partial_r^{\varphi} \gradphi V = \diff \eta  \partial_r^{\varphi}\left(\partial_t^{\varphi}V + (\Ub + \epsilon \vec{U}) \cdot \gradphi V \right) - \diff \eta \partial_r^{\varphi}(\Ub + \epsilon \vec{U}) \cdot \gradphi V.$$
Taking the $L^2-$norm and using the product estimate \eqref{apdx:pdt:tame}, we can bound the right-hand side of the previous expression from above by
$$c \Vert \eta \Vert_{H^{s,k}} \left( \Vert \partial_r^{\varphi} \left(\partial_t^{\varphi}V + (\Ub + \epsilon \vec{U}) \cdot \gradphi V \right) \Vert_{H^{s_0+\frac12,1}} + \Vert \partial_r^{\varphi}(\Ub + \epsilon \vec{U}) \cdot \gradphi V \Vert_{H^{s_0+\frac12,1}} \right).$$
For the first term, we use the equation \eqref{eqn:euler_phi} together with the product estimate \cite[Lemma A.3]{Duchene2022}:
$$\Vert \partial_r^{\varphi} \left(\partial_t^{\varphi}V + (\Ub + \epsilon \vec{U}) \cdot \gradphi V \right) \Vert_{H^{s_0+\frac12,1}} \leq C \Vert \frac{1}{\varrho} \gradphi[x] P \Vert_{H^{s_0+\frac32,2}} \leq C\tilde{M}.$$
We also used the estimate on the pressure given by \eqref{eqn:elliptic_high_reg:perte} from Proposition \ref{lemma:elliptic:high_reg:perte}. \\
The second term is controlled by product estimates \cite[Lemma A.3]{Duchene2022} and \eqref{apdx:pdt:infty}, namely
$$\Vert \partial_r^{\varphi}(\Ub + \epsilon \vec{U}) \cdot \gradphi V \Vert_{H^{s_0+\frac12,1}} \leq C \left( |\Vb|_{W^{2,\infty}} + \epsilon \Vert \vec{U} \Vert_{H^{s_0+\frac32,2}} \right) \Vert V \Vert_{H^{s_0+\frac32,2}} \leq C\tilde{M}.$$This eventually yields a control on the first and third terms of $\Run$. \\
For the second term of $\Run$, the estimate \eqref{apdx:alinhac:R1} yields:
$$\Vert \Ral{1}{\partial_t V} \Vert_{L^2} \leq C   \Vert \nabla_{t,x,r}\eta \Vert_{H^{s-1}} \Vert \gradphi[t,x,r] V \Vert_{H^{s-1}}$$
Note that we crucially used $k=s$ in \eqref{apdx:alinhac:R1} in order to gain one derivative. We now use the equations \eqref{eqn:euler_phi} in order to trade the time derivatives in the right-hand side for space derivatives. Namely, taking the $H^{s-1}$ norm of the first and third equations in \eqref{eqn:euler_phi} yields
$$ \sys{ \Vert \partial_t^{\varphi} V \Vert_{H^{s-1}} &\leq \Vert (\Ub + \epsilon \vec{U}) \cdot \gradphi V + \frac{1}{\varrho} \gradphi[x] P \Vert_{H^{s-1}}, \\
\Vert \partial_t \eta \Vert_{H^{s-1}} &\leq \Vert (\Vb + \epsilon V) \cdot \nabla_x \eta - w \Vert_{H^{s-1}}. }$$We now use triangular inequality, product estimates \cite[Lemma A.3]{Duchene2022} and \eqref{apdx:pdt:infty} as well as the control on the pressure provided in Proposition \ref{lemma:elliptic_high_reg}, to write
$$\sys{ \Vert \partial_t^{\varphi} V \Vert_{H^{s-1}} &\leq C\tilde{M}, \\
		\Vert \partial_t \eta \Vert_{H^{s-1}} &\leq C(\Vert \eta \Vert_{H^s} + \Vert w \Vert_{H^{s-1}}).  }$$Eventually, we use the incompressibility constraint to estimate $\Vert w \Vert_{H^{s-1}}$ uniformly in $\mu$, by loosing one derivative, as detailed in Lemma \ref{apdx:lemma:w_trick}, to get
$$ \Vert \partial_t \eta \Vert_{H^{s-1}} \leq C\tilde{M}.$$
We thus write
$$\Vert \Ral{1}{\partial_t V} \Vert_{L^2} \leq C \epsilon  \Vert \eta \Vert_{H^{s}} \Vert V \Vert_{H^{s}} + C \epsilon  \Vert \eta \Vert_{H^{s}} \Vert  V \Vert_{H^{s}}.$$
The fourth term in $\Run$ is directly controlled using \eqref{apdx:alinhac:R1} and the product estimate \cite[Lemma A.3]{Duchene2022}. \\
For the last two terms of $\Run$, we use the product estimate \cite[Lemma A.3]{Duchene2022} and the bound \eqref{apdx:alinhac:R1} to get  
$$\left\Vert \frac{1}{\varrho} \left(\epsilon \diff \eta \partial_r^{\varphi} \nabla_x^{\varphi} P + \epsilon \Ral{1}{\gradphi P} \right)\right\Vert_{L^2}\leq \epsilon C  \Vert \gradphi[x] P \Vert_{H^{s,k}} \leq \frac{\epsilon }{\sqrt{\mu}} C\tilde{M},$$
where we used Proposition \ref{lemma:elliptic_high_reg} to estimate the pressure, and \eqref{hyp:non_cavitation}. Assuming $\epsilon \leq \sqrt{\mu}$ allows us to conclude
$$\Vert \Run \Vert_{L^2} \leq C \tilde{M}.$$
\underline{\bf Estimate for $\Rdeux$:}\\ For the first term of $\Rdeux$, we apply the commutator estimates \eqref{apdx:commutator:gen}, \eqref{apdx:commutator:horizontal} and \eqref{apdx:commutator:infty}. We distinguish the case of $\Vb$ and $V$ as one is controlled in $W^{s,\infty}$ and the other in $H^{s}$:
$$\begin{aligned}
\Vert [{\diff},\Vb + \epsilon V] \cdot \nabla_x V \Vert_{L^2} &\leq c\Vert [{\diff},\Vb]\nabla_x V \Vert_{L^2} + \epsilon c\Vert [{\diff},  V]\nabla_x V \Vert_{L^2} \\
&\leq c\Vert \Vb \Vert_{W^{k,\infty}} \Vert \nabla_x V \Vert_{H^{s-1,k-1}} + \epsilon c \Vert V \Vert_{H^{s,k}} \Vert \nabla_x V \Vert_{H^{s-1,k-1}} \\
&\leq C \tilde{M}.
\end{aligned} $$
We control the second term of $\Rdeux$ the exact same way, by $\Vert \gradphi[x] P \Vert_{H^{s-1}}$. Thus, by \eqref{eqn:elliptic_high_reg:perte}, we get
$$\Vert \gradphi[x] P \Vert_{H^{s-1,k-1}} \leq C\tilde{M}.$$ 
Note that the bound is uniform in $\mu$, and we get
$$ \Vert \Rdeux \Vert_{L^2} \leq C\tilde{M}.$$\underline{\bf Estimate for $\Rtrois$ and $\Rquatre$:}\\
The terms in $\Rtrois$ and $\Rquatre$ have the same form as the terms in $\Run$ or $\Rdeux$ and we omit them.\\
\underline{\bf Estimate for $\Rcinq$:}\\ The first term of $\Rcinq$ is of the same form as the first term of $\Rdeux$ and is treated the same way. The second term is controlled by the product estimate \eqref{apdx:pdt:tame}. We get 
$$\Vert \Rcinq \Vert_{L^2} \leq C\tilde{M}. $$
\underline{\bf Estimate for $\Rsix$:}\\ The first and third terms of $\Rsix$ are bounded by the product estimate \eqref{apdx:pdt:tame}. The second and fourth terms are controlled thanks to the estimates \eqref{apdx:alinhac:R2} and \eqref{apdx:alinhac:R1:Vbar}. We get:
$$\Vert \Rsix \Vert_{L^2} \leq C\tilde{M}.$$
\underline{\bf Estimate for $F$:}\\
The commutator estimate \eqref{apdx:commutator:infty} and Proposition \ref{lemma:elliptic_high_reg} allow us to bound the terms of $F$:
$$ \begin{aligned} \Vert [{\diff},\frac{1}{\varrho}]\partial_r^{\varphi} P\Vert_{L^2} + \Vert[{\diff},\frac{\varrho'}{\varrho}]\eta\Vert_{L^2} &\leq C \left( \Vert \partial_r^{\varphi} P \Vert_{H^{s-1,k-1}} +  \Vert \eta \Vert_{H^{s-1,k-1}} \right)\leq C \tilde{M}.
\end{aligned} $$
Note that as $\varrho$ and $\varrho'$ only depend on $r$, if $k=0$ then the commutators in $F$ are zero.
\end{proof}

\begin{remark}
\label{rk:summary_proof}
Before turning to the proof of the energy estimates, let us give an insight on the contributions of the different terms in the quasilinearized system \eqref{eqn:euler_isopycnal_quasilin}.{\newcommand{\diff}{\mathbb{\Lambda}} The proof yields a slightly more precise estimate than the one stated in Proposition \ref{lemma:energy}, namely
$$ \frac{d}{dt} \E(t) \leq C \left(1+\frac{\epsilon}{\sqrt{\mu}} + \frac{|\Vb'|_{L^{\infty}}}{\sqrt{\mu}} \right) \E(t).$$
This is done by testing the equations \eqref{eqn:euler_isopycnal_quasilin} against $(\varrho(1+\epsilon h)V^{\diff}, \varrho(1+\epsilon h) w^{\diff}, \varrho' \diff \eta )$, with $ \diff := \Lambda^{s-k} \partial_r^k$ with $s \geq s_0+\frac52$, $ k \geq 1$ or $\diff = |D|^2 \Lambda^{s-2}$, and estimating the subsequent terms. The terms on the left hand-side of \eqref{eqn:euler_isopycnal_quasilin} cancel each other out. The terms in $\Rdeux$, $\Rcinq$ are of order $1$.
The terms $\tilde{b}$ and $\Rtrois$ account for the factor $\frac{\epsilon}{\sqrt{\mu}}$ above, as they are multiplied by $w$ in the energy estimate, and the energy functional only controls $\sqrt{\mu} w$. \\
For the term $F$ in \eqref{eqn:euler_isopycnal_quasilin}, we then distinguish two cases. 
First, if we only differentiate the system with respect to the horizontal variable $x$ (see case 1 in the proof of Proposition \ref{lemma:energy}, with $\mathbb{\Lambda} = |D|^2\Lambda^{s-2}$), then $F = 0$, as $\varrho$ only depends on $r$. The pressure term is canceled by the use of the divergence-free condition, as well as the boundary condition on $w$. \\
Second, if we also differentiate the system with respect to the vertical variable $r$ (see the case 2 in the proof, with $\mathbb{\Lambda} = \Lambda^{s-k}\partial_r^k$), then we only need to control (derivatives of) $\partial_r w$, and not $w$ itself. Moreover, the divergence-free condition allows us to express $\partial_r w$ with respect to the other unknowns. Therefore, the commutators in the second equation of \eqref{eqn:euler_isopycnal_quasilin} are not multiplied by $w$ anymore, and thus are of order $1$. The same discussion holds for the pressure term and yields the contribution of size $\frac{|\Vb'|_{L^{\infty}}}{\sqrt{\mu}}$. A comparison with the result in \cite{Desjardins2019} is provided in remark \ref{rk:trick}. }
\end{remark}
\begin{proof}[Proof of Proposition \ref{lemma:energy}]
\newcommand{\diff}{\mathbb{\Lambda}}
Let $ 0 \leq k \leq s$ and $ \diff$ be either $|D|^2 \Lambda^{s-2}$ or $\Lambda^{s-k}\partial_r^k$ (for $k \geq 1$) . Recall that $V^{\diff}$ is Alinhac's good unknown associated with the operator ${\diff}$ defined in \eqref{apdx:alinhac:definition}. \\
According to Lemma \ref{lemma:quasilin}, $(V^{\diff},w^{\diff},\dot{\eta})$ (with $\dot{\eta} := {\diff}\eta$) satisfy \eqref{eqn:euler_isopycnal_quasilin}, with $P$ defined according to Proposition \ref{lemma:elliptic_high_reg}. \\

Testing the first equation against $\varrho (1+\epsilon h) V^{\diff}$, the second one against $\varrho (1+\epsilon h) w^{\diff}$, the third one against $\varrho' \dot{\eta}$ and summing the corresponding expressions, we get :
\begin{equation}
\label{eqn:energy}
\begin{aligned}
&\int_{S_r} \varrho (1+\epsilon h) \partial_t V^{\diff} \cdot V^{\diff} + \mu \int_{S_r} \varrho (1+\epsilon h) \partial_t w^{\diff} w^{\diff} +  \int_{S_r} \varrho' \partial_t \dot{\eta}\dot{\eta} + (i) + (ii) = (iii) + (iv) + (v) + (vi) + (vii), 
\end{aligned} 
\end{equation}
where
$$ \begin{aligned} 
(i) &:= \int_{S_r} (\Vb + \epsilon V) \cdot \nabla_x V^{\diff} \cdot V^{\diff} \varrho (1+ \epsilon h) + \mu \int_{S_r} (\Vb + \epsilon V) \cdot \nabla_x w^{\diff} w^{\diff} \varrho (1+ \epsilon h)  + \int_{S_r} \varrho' (\Vb + \epsilon V) \cdot \nabla_x \dot{\eta} \dot{\eta}, \\
(ii) &:= \int_{S_r} \frac{\varrho'}{\varrho} \dot{\eta} w^{\diff} \varrho (1+\epsilon h) - \int_{S_r} w^{\diff} \dot{\eta} \varrho',\\
(iii) &:= -\int_{S_r} \nabla_x^{\varphi} P^{\diff} \cdot V^{\diff} (1+\epsilon h) \nouveau{-}\int_{S_r} \partial_r^{\varphi} P^{\diff} \cdot w^{\diff} (1+\epsilon h),\\
(iv) &:= \epsilon \int_{S_r}\left( -\diff \tilde{b} w^{\diff} \varrho (1+\epsilon h)+ \Run V^{\diff} \varrho (1+\epsilon h) + \Rtrois w^{\diff} \varrho (1+\epsilon h)\right),\\
(v) &:= \mu \int_{S_r} \Rquatre w^{\diff} \varrho (1+\epsilon h),\\
(vi) &:=  \int_{S_r} \left(\Rdeux V^{\diff} \varrho (1+\epsilon h) + \Rcinq \cdot{\eta} \varrho' \right),\\
(vii) &:= \int_{S_r} F w^{\diff} \varrho (1+\epsilon h). \\
\end{aligned} $$

We start by studying the left-hand side of \eqref{eqn:energy}. Note that as $\eta$ is in $H^{s_0+\frac52,3}$, we can differentiate equation \eqref{eqn:euler_slag}[(3)] on the evolution of $\eta$ with respect to $r$ to obtain an equation on $h := - \partial_r \eta$:
$$\partial_t h + (\Vb + \epsilon V) \cdot \nabla_x h \nouveau{-} \nabla_x \eta \cdot \partial_r( \Vb + \epsilon V) \nouveau{+} \partial_r w = 0.$$
By the divergence-free condition, the two last terms are equal to $(1+\epsilon h) \nabla_x \cdot (\Vb + \epsilon V)$  so that we can write
\begin{equation}
\label{eqn:h}
\epsilon \partial_t h + \nabla_x \cdot ((1+\epsilon h) (\Vb + \epsilon V)) = 0.
\end{equation}
Therefore, one can integrate by parts horizontally in the integrals in $(i)$ and use \eqref{eqn:h} to get

$$ (i) = \frac12 \int_{S_r} \varrho \partial_t (1+\epsilon h) |V^{\diff}|^2 + \mu  \frac12 \int_{S_r} \varrho \partial_t (1+\epsilon h) |w^{\diff}|^2  + \frac12 \int_{S_r} \varrho'|\dot{\eta}|^2. $$
By the Leibniz rule, the left-hand side of \eqref{eqn:energy} becomes
$$\frac12 \frac{d}{dt} \left( \int_{S_r} \varrho (1+\epsilon h) |V^{\diff}|^2 + \mu  \frac12 \int_{S_r} \varrho (1+\epsilon h) |w^{\diff}|^2  + \frac12 \int_{S_r} \varrho' |\dot{\eta}|^2 \right) + (ii),$$
whose first term is a summand of $\E$. \\ 
We now examine the remaining terms in \eqref{eqn:energy}.
\begin{itemize}
\item[(ii)] Note that the leading order terms in $\epsilon$ of $(ii)$ cancel each other. Therefore, we can bound the remaining term by Cauchy-Schwarz inequality by
$$\epsilon C \Vert w^{\diff} \Vert_{L^2} \Vert \dot{\eta} \Vert_{L^2}. $$

\item[(iv)] We use Cauchy-Schwarz inequality on each term of $(iv)$, to get
$$\begin{aligned} 
(iv) &\leq \epsilon \Vert \diff \tilde{b}\Vert_{L^2} \Vert   \varrho (1+\epsilon h)w^{\diff} \Vert_{L^2} + \epsilon \Vert \Run \Vert_{L^2} \Vert \varrho (1+\epsilon h)V^{\diff} \Vert_{L^2} + \epsilon \Vert \Rtrois \Vert_{L^2}  \Vert  \varrho (1+\epsilon h)w^{\diff}\Vert_{L^2}.
\end{aligned}$$
Using Lemma \ref{lemma:quasilin} and $\epsilon \leq \sqrt{\mu}$, we can bound the terms in $(iv)$ by $C \E$.
\item[(v)] As for the terms in $(iv)$, we get 
$$ \mu \int_{S_r} \Rquatre w^{\diff} \varrho (1+\epsilon h) \leq C \E. $$

\item[(vi)] As for the terms in $(iv)$, we get 
$$\int_{S_r} \left(\Rdeux V^{\diff} \varrho (1+\epsilon h) + \Rcinq \cdot{\eta} \varrho' \right) \leq C \E.$$

\end{itemize}
We now turn to the terms in $(iii)$ and $(vii)$. For these terms, we need to distinguish two cases : \\
{\bf\underline{Case 1:} ${\diff} = |D|^2 \Lambda^{s-2}$} \\
In this case, we use an integration by parts to get rid of the pressure term. Note indeed that the impermeability condition yields : $|D|^2\Lambda^{s-2} w_{|r=0;1} = 0$ and $\eta$ is constant on the boundaries so $|D|^2\Lambda^{s-2} \eta_{|r=0;1} = 0$. Thus, $w^{\diff}$ has zero trace on the top and bottom. 
\begin{itemize}
\item[(iii)]
We integrate by parts in order to use the divergence-free condition. Namely, writing $\vec{U}^{\diff} := \begin{pmatrix} V^{\diff} \\ w^{\diff}  \end{pmatrix}$:
$$\begin{aligned} (iii) = \int_{S_r} \nabla^{\varphi} P^{\diff} \cdot \vec{U}^{\diff} (1+\epsilon h) &= \int_{r = 0; 1} P^{\diff} w^{\diff} - \int_{S_r} P^{\diff} \nabla^{\varphi} \cdot \vec{U}^{\diff}\\
&=  \frac{1}{\epsilon} \int_{S_r} P^{\diff} \nabla_x^{\varphi} \cdot \Vb^{\diff} - \int_{S_r} P^{\diff} \Rsix\\
&= -\frac{1}{\epsilon} \int_{S_r} \nabla_x^{\varphi}P^{\diff} \cdot \Vb^{\diff}  - \int_{S_r} P^{\diff} \Rsix.\\
\end{aligned}$$
For the first term on the right-hand side, we use Cauchy-Schwarz inequality. Recall the estimate of Proposition \ref{lemma:elliptic_high_reg}:
$$ \Vert \sqrt{\mu} \gradphi[x] P^{\diff} \Vert_{L^2} \leq C \left( \Vert V \Vert_{H^{s}} + \Vert \sqrt{\mu} w \Vert_{H^s} + \Vert \eta \Vert_{H^s} \right).$$
Recall also the expression $\Vb^{\diff} = \epsilon \frac{{\diff} \eta}{1+\epsilon h} \Vb'.$
This yields:
$$\frac{1}{\epsilon} \int_{S_r} \nabla_x^{\varphi}P^{\diff} \cdot \Vb^{\diff} \leq \frac{| \Vb'|_{L^{\infty}}}{\sqrt{\mu}}  C \left( \Vert V \Vert_{H^{s}} + \Vert \sqrt{\mu} w \Vert_{H^s} + \Vert \eta \Vert_{H^s} \right).$$
Then, we use the assumption $|\Vb'|_{L^{\infty}} \leq \sqrt{\mu}$ to bound this factor by $C$. \\
For the second term, we use Cauchy-Schwarz inequality. Recall the bound on $\Rsix$ provided in Lemma \ref{lemma:quasilin}. For the pressure, recall the expression \eqref{eqn:Ldeux} derived in the proof of Proposition \ref{lemma:elliptic_high_reg}:
$$\Vert P^{\diff} \Vert_{L^2} \leq \Vert \nabla P \Vert_{H^{s-1,0}} + \epsilon \Vert \eta \Vert_{H^{s,0}} \Vert \gradphi[\mu] P \Vert_{H^{s_0+\frac12,1}}.$$
Hence, using the estimate \eqref{eqn:elliptic_high_reg:perte} to control the first term independently of $\mu$ and the estimate \eqref{eqn:elliptic:high_reg} to control the second term, this yields
$$\Vert P^{\diff} \Vert_{L^2} \leq C \left( \Vert V \Vert_{H^{s}} + \Vert \sqrt{\mu} w \Vert_{H^s} + \Vert \eta \Vert_{H^s} \right).$$
We eventually get
$$(iii) \leq C \left( \Vert V \Vert_{H^s}^2  + \mu \Vert w \Vert_{H^s}^2 + \Vert \eta \Vert_{H^s}^2 \right.).$$ 
\item[(vii)] Recall that $F$ is the sum of two commutators of $|D|^2 \Lambda^{s-2}$ with functions that do not depend on $x$, and thus is zero in this case.
\end{itemize}
{\bf \underline{Case 2:} ${\diff} = \Lambda^{s-k} \partial_r^k$ for $k \geq 1$} \\
In this case, we use the fact that $k \geq 1$ to substitute $w^{\diff}$ by terms that depend only on $V$ and $\eta$, thanks to the incompressibility constraint. To do so, recall the definition 
$$w^{\diff} := \Lambda^{s-k} \partial_r^{k-1} \partial_r w + \frac{ \Lambda^{s-k} \partial_r^k \eta}{1+\epsilon h} \partial_r w.$$
The divergence-free condition yields
$$ \partial_r w = (1+\epsilon h) \nabla_x^{\varphi} \cdot V + \nabla_x \eta \cdot \Vb'. $$
Thus, applying $\Lambda^{s-k} \partial_r^{k-1}$ and taking the $L^2$-norm yields
$$\Vert  \Lambda^{s-k} \partial_r^{k-1} \partial_r w \Vert_{L^2}  \leq \Vert \Lambda^{s-k} \partial_r^{k-1}\left( (1+\epsilon h) \nabla_x^{\varphi} \cdot V\right) \Vert_{L^2} + \Vert  \Lambda^{s-k} \partial_r^{k-1}\left( \nabla_x \eta \cdot \Vb'\right) \Vert_{L^2} $$
We use product estimates, as $s-1 \geq s_0+\frac12$ :\\
If $k-1 \geq 1$, we use \cite[Lemma A.3]{Duchene2022} in the expression of $\gradphi[x] \cdot V$ (recall the definition \eqref{eqn:def:gradphi} of $\gradphi[x]$) to get
$$\begin{aligned} \Vert \Lambda^{s-k} \partial_r^{k-1}\left( (1+\epsilon h) \nabla_x^{\varphi} \cdot V\right) \Vert_{L^2} &\leq c \Vert \nabla_x \cdot V \Vert_{H^{s-1,k-1}} + c \Vert h \Vert_{H^{s-1,k-1}}\Vert \nabla_x \cdot V \Vert_{H^{s-1,k-1}}  + c \Vert \nabla_x \eta \Vert_{H^{s-1,k-1}}\Vert\partial_r V \Vert_{H^{s-1,k-1}}. \end{aligned} $$
If $k-1=0$, we use \eqref{apdx:pdt:tame} to get
$$\begin{aligned} \Vert \Lambda^{s-k} \partial_r^{k-1}\left( (1+\epsilon h) \nabla_x^{\varphi} \cdot V\right) \Vert_{L^2} \leq &c \Vert \nabla_x \cdot V \Vert_{H^{s-1,0}} + c \Vert h \Vert_{H^{s-1,0}}\Vert \nabla_x \cdot V \Vert_{H^{s_0+\frac12,1}}\\
&+ c \Vert h \Vert_{H^{s_0+\frac12,1}}\Vert \nabla_x \cdot V \Vert_{H^{s-1,0}}  + c \Vert \nabla_x \eta \Vert_{H^{s-1,0}}\Vert\partial_r V \Vert_{H^{s_0+\frac12,1}} \\
&+ c \Vert \nabla_x \eta \Vert_{H^{s_0+\frac12,1}}\Vert\partial_r V \Vert_{H^{s-1,0}}. \end{aligned}$$
In any case, as $s \geq s_0+\frac52 \geq 2$:
$$\Vert \Lambda^{s-k} \partial_r^{k-1}\left( (1+\epsilon h) \nabla_x^{\varphi} \cdot V\right) \Vert_{L^2} \leq C \left( \Vert V \Vert_{H^{s}} + \Vert \eta \Vert_{H^{s}} \right). $$
The second term $\nabla_x \eta \cdot \Vb'$ is controlled the same way and we get 
\begin{equation}
\label{eqn:w:temp1}
\Vert w^{\diff} \Vert \leq C \left( \Vert V \Vert_{H^{s}} + \Vert \eta \Vert_{H^{s}} \right).
\end{equation}
We can now bound the terms $(iii)$ and $(vii)$.
\begin{itemize}
\item[(iii)]
Writing $\vec{U}^{\diff} := \begin{pmatrix} V^{\diff} \\ w^{\diff}  \end{pmatrix}$, we bound each term using Cauchy-Schwarz inequality and the previous bound \eqref{eqn:w:temp1} on $\Vert w^{\diff} \Vert_{L^2}$, namely
$$\int_{S_r} \nabla^{\varphi} P^{\diff} \cdot \vec{U}^{\diff} (1+\epsilon h) \leq \Vert \nabla^{\varphi} P^{\diff} \Vert_{L^2} \left( \Vert V^{\diff} \Vert_{L^2} + \Vert w^{\diff} \Vert_{L^2} \right).$$
We use Proposition \ref{lemma:elliptic_high_reg} to control $\Vert \partial_r^{\varphi} P^{\diff} \Vert_{L^2}$. However, in order to have a control on $\Vert \gradphi[x] P^{\diff} \Vert_{L^2}$ that is uniform in $\mu$, we rather use \eqref{apdx:alinhac:comm1} together with \eqref{apdx:alinhac:R1} to write
$$\Vert \gradphi[x] P^{\diff} \Vert_{L^2} \leq C\Vert \diff \gradphi[x] P\Vert_{L^2}  + C \Vert \gradphi[x] P \Vert_{H^{s_0+\frac32,1}} + C \Vert \gradphi[x] P \Vert_{H^{s-1}}.$$
The last two terms are controlled with Equation \eqref{eqn:elliptic_high_reg:perte}. For the first one, remark that 
$$ \begin{aligned}\diff \gradphi[x] P &= \Lambda^{s-k}\partial_r^{k-1} \partial_r \gradphi[x]P \\
&= -\Lambda^{s-k}\partial_r^{k-1} (1+\epsilon h) \partial_r^{\varphi} \gradphi[x]P = -\Lambda^{s-k}\partial_r^{k-1} \left((1+\epsilon h) \gradphi[x] \partial_r^{\varphi} P\right).
\end{aligned}$$
Thus, we control the $L^2$ norm of this term with $ C\Vert \partial_r^{\varphi} P \Vert_{H^{s,k}} $ by the product estimate \cite[Lemma A.3]{Duchene2022}. \\
This eventually yields a control of $\Vert \gradphi[x] P^{\diff} \Vert_{L^2}$ that is uniform in $\mu$, and allows us to conclude:
$$\int_{S_r} \nabla^{\varphi} P^{\diff} \cdot \vec{U}^{\diff} (1+\epsilon h) \leq C \left( \Vert V^{\diff} \Vert_{L^2} + \Vert V \Vert_{H^{s}} + \sqrt{\mu}\Vert w \Vert_{H^{s}} +\Vert \eta \Vert_{H^{s}} \right)^2 $$
\item[(vii)] For the term $(vii)$, we use Cauchy-Schwarz inequality, Lemma \ref{lemma:quasilin} as well as the previous bound \eqref{eqn:w:temp1} on $\Vert w^{\diff} \Vert_{L^2}$, to get
$$ \int_{S_r} F w^{\diff} \leq C \left( \Vert V^{\diff} \Vert_{L^2} + \Vert V \Vert_{H^{s}} + \sqrt{\mu}\Vert w \Vert_{H^{s}} +\Vert \eta \Vert_{H^{s}} \right)^2 $$
\end{itemize}

Summing these estimates together yields 
$$\frac{d}{dt} \E \leq C\left( \E + \Vert V \Vert_{H^{s,k}}^2 + \mu \Vert w \Vert_{H^{s,k}}^2 + \Vert \eta \Vert_{H^{s,k}}^2 \right). $$
Lemma \ref{lemma:equiv_energy} allows us to conclude.
\end{proof}
\begin{remark}
\label{rk:nrj:triche}
\newcommand{\diff}{\mathbb{\Lambda}}
Once we apply a differential operator $\diff$ of the form $|D|^2 \Lambda^{s-2}$ or $\Lambda^{s-k}\partial_r^k$ for $k \leq s$ to the system \eqref{eqn:euler_slag}, we get the quasilinear system \eqref{eqn:euler_isopycnal_quasilin}. Note however that the terms $\nabla_x V^{\diff},\nabla_x w^{\diff},\nabla_x |D|^2 \Lambda^{s-2}\eta$ in \eqref{eqn:euler_isopycnal_quasilin} are distributions, as $V,w,\eta$ are in $H^{s,s}(S)$, but may not be in $H^{s+1,s}(S_r)$ (recall the definition \eqref{eqn:sobolev:anisotrope:def}). This can be solved by applying a smoothing (in x) operator, for instance $\chi(\delta \Lambda) \diff$ where $\chi$ is a smooth bump function and $\delta > 0$, instead of $\mathbb{\Lambda}$ in the proof of Proposition \ref{lemma:energy}. The same estimates hold, are uniform in $\delta$, and sending $\delta \to 0$ yields the estimate of Proposition \ref{lemma:energy}. This is detailed in the context of elliptic equations in \cite[Lemma 2.38]{Lannes2013} and we omit it.
\end{remark}
\begin{remark}
\label{rk:trick}
The proof yields a slightly more precise estimate than the one stated in Proposition \ref{lemma:energy}, namely
$$ \frac{d}{dt} \E(t) \leq C \left(1+\frac{\epsilon}{\sqrt{\mu}} + \frac{|\Vb'|_{L^{\infty}}}{\sqrt{\mu}} \right) \E(t).$$
The contributions of the different source terms of \eqref{eqn:euler_isopycnal_quasilin} in this estimate have already been summarized in Remark \ref{rk:summary_proof}. Let us now detail how the additional assumptions in \cite{Desjardins2019} allow the authors to obtain a time of existence of order $O\left(\left( \frac{\epsilon}{\sqrt{\mu}} \right)^{-1}\right)$.\\
The terms in $\Rdeux$, $\Rcinq$ are of order $1$. However, assuming $\Vb = 0$ and making the strong Boussinesq assumption (that is, the coefficient $\frac{1}{\varrho}$ in front of the pressure terms is a constant), as done in \cite{Desjardins2019}, sets the terms of order $1$ to zero and therefore $\Run$ and $\Rcinq$ are of order  $\epsilon$.\\
In the proof of Proposition \ref{lemma:energy}, we then distinguished two cases. In \cite{Desjardins2019}, the authors only use the strategy of the first case in the proof of Proposition \ref{lemma:energy}, that is the pressure term is canceled by the use of the divergence-free conditions. Hence, they need the trace of the vertical derivatives of $w$ to vanish at the boundary, which is not true in general, but is the purpose of the good preparation of the initial data that they assume. Furthermore, with the assumptions of the strong Boussinesq approximation and constant Brunt-Väisälä frequency, the commutators in the term $F$ are zero. The upside is that they do not have any term of order $1$, and no shear velocity at equilibrium, and thus obtain a time of existence of order $\frac{\sqrt{\mu}}{\epsilon}$. \\
\end{remark}
\begin{remark}
\label{rk:wellprepared}
Finally, let us state that the distinction of the two cases described above holds true in Eulerian coordinates, and is by no means specific to the isopycnal coordinates. Conversely, we believe that making the same assumptions as in \cite{Desjardins2019} would yield the same time of existence in isopycnal coordinates. However, the propagation of the good preparation assumption seems to require more technical computations than in Eulerian coordinates. The well-preparation of the initial data would then allow to use apply the strategy of the Case 1 in the proof of Proposition \ref{lemma:energy} for both the case $\mathbb{\Lambda} = |D|^2 \Lambda^{s-2}$ and  $\mathbb{\Lambda} = \Lambda^{s-k}\partial_r^k$, $k \geq 1$. More precisely, with this additional assumption, the integration by parts in the Case 1, $(iii)$ does not yield boundary terms.
\end{remark}
\section{Conclusion}
\label{section:conclusion}
We first state and prove Proposition \ref{prop:existence}, that is an existence result for PDEs of the form of \eqref{eqn:euler_slag}. Then we show how to apply this proposition to prove Theorem \ref{thm:wp}. Then, we state the result of Theorem \ref{thm:wp} in Eulerian coordinates (see Corollary \ref{cor:euleriennes}), for the sake of completeness.
\subsection{Completion of the proof of Theorem \ref{thm:wp}}
\label{subsection:proof_thm}
Note that \eqref{eqn:euler_euleriennes} is a quasilinear initial boundary value problem, and as such the existence of solutions is not obvious, see \cite{BenzoniGavage2006} for a quite complete introduction on these systems. However, \eqref{eqn:euler_slag} does not contain any vertical advection, and can therefore be viewed as \nouveau{a} family of initial value problems on $\R^d$ parametrized by $r \in [0,1]$. Still, the operator $\gradphi$ having coefficients that depend on the gradient of $\eta$, the linear system (that is, taking $\epsilon = 0$ in \eqref{eqn:euler_slag}) and the non-linear system \eqref{eqn:euler_slag} do not have the same structure, and a standard Picard iteration scheme relying on the energy estimates in Proposition \ref{lemma:energy} cannot be used directly. We thus choose to sketch the proof of Theorem \ref{thm:wp} in this subsection.  We first show the existence and uniqueness of solutions for the system \eqref{eqn:isopyc:gen} in Proposition \ref{prop:existence}. Note that we strongly use the semi-Lagrangian structure of the system \eqref{eqn:isopyc:gen}, see Remark \ref{rk:slag_existence}. Then, we apply Proposition \ref{prop:existence} to the Euler equations \eqref{eqn:euler_slag} to prove the existence of solutions on a time interval independent of the parameters $\epsilon$ and $\mu$, relying on the energy estimate of Proposition \ref{lemma:energy}. \\
Let $s_0 > d/2$, $s \geq s_0+\frac52$ an integer. In Proposition \ref{prop:existence}, we aim at constructing a solution in $C^0([0,T),H^s(\nouveau{S_r}))$ of the system
\begin{equation}
\label{eqn:isopyc:gen}
\sys{ \partial_t \vec{U} + (\Vb + V) \cdot \nabla_x \vec{U} + \frac{1}{\varrho} \gradphi P + \vec{b} &= 0, \\
\partial_t \eta + (\Vb + V) \cdot \nabla_x \eta - w &= 0, \\
\gradphi \cdot ( \Ub + \vec{U}) &= 0 ,}
\qquad \text{ in } [0,T) \times S_r;
\end{equation}
here, we write $\vec{U} := (V^T,w)^T$, $\Ub = (\Vb^T,0)^T$, $\varphi$ is defined from $\eta$ through \eqref{eqn:def:phi} and $\gradphi$ is defined from $\eta$ through \eqref{eqn:def:gradphi}. We assume that $\Vb$ and $\varrho$ satisfy \eqref{hyp:Mb}. We also assume that $\vec{b}$ is a source term that may depend on the solution $(\vec{U},\eta)$, satisfying
\begin{hyp}
\label{hyp:existence:b}
\sys{ \Vert \vec{b}(\vec{U},\eta) \Vert_{H^s} \leq C (1+\Vert (\vec{U},\eta) \Vert_{H^s}),\\
\Vert \vec{b}(\vec{U_1},\eta_1) - \vec{b}(\vec{U_2},\eta_2) \Vert_{H^s} \leq C \Vert (\vec{U_1} - \vec{U_2},\eta_1-\eta_2) \Vert_{H^s},}
\end{hyp}  with $C > 0$ a constant. We also assume that the traces $\eta_{|r=-1}$ and $\eta_{|r=0}$ of $\eta$ at the top and bottom are zero, i.e. that the top and bottom boundaries are flat, and the impermeability condition:
\begin{equation}
\label{hyp:zero_trace}
\sys{
w_{|r=0} &= w_{|r=1} = 0, \\
\eta_{|r=1} &= \eta_{|r=0} = 0.}
\end{equation}The factor $\varrho$ only depends on $r$ and satisfies
\begin{hyp}
\label{hyp:existence:rho}
c_* \leq \varrho \leq c^*.
\end{hyp}The system \eqref{eqn:isopyc:gen} is completed with the initial data $(\vec{U}_{\ini},\eta_{\ini})\in H^{s}(S)^{d+2}$ satisfying
\begin{hyp}
\label{hyp:existence:Min}
\Vert (\vec{U}_{\ini},\eta_{\ini}) \Vert_{H^s} \leq M_{\ini}
\end{hyp}for a constant $M_{\ini} > 0$, as well as the boundary conditions \eqref{hyp:zero_trace}, the divergence-free equation in \eqref{eqn:isopyc:gen} and
\begin{hyp}
\label{hyp:existence:hin}
c_* \leq 1+h_{\ini} \leq c^*,
\end{hyp}
with $h_{\ini} := - \partial_r \eta_{\ini}$.
\begin{proposition}
\label{prop:existence}
Let $s_0 > d/2$, $s \geq s_0+\frac52$, $(\Vb,\varrho) \in W^{s+1,\infty}([0,1])^{d+1}$ that satisfy \eqref{hyp:Mb}. Let $(\vec{U}_{\ini},\eta_{\ini})\in H^{s}(\nouveau{S_r})^{d+2}$ satisfy  \eqref{hyp:existence:Min},\eqref{hyp:existence:hin}, the boundary condition \eqref{hyp:zero_trace} and the divergence-free condition in \eqref{eqn:isopyc:gen}. Assume that the terms $\vec{b}$ and $\varrho$ satisfy \eqref{hyp:existence:b} and \eqref{hyp:existence:rho} respectively. Then there exist $T > 0$ that only depends on $\Mb, M_{\ini}, c_*, h_*, h^*, d,$ $s_0,s$ and a unique solution $(\vec{U},\eta) \in C^0([0,T),H^s(\nouveau{S_r})^{d+2})$ to \eqref{eqn:isopyc:gen} with initial data $(\vec{U}_{\ini},\eta_{\ini})$ and boundary conditions \eqref{hyp:zero_trace}. Moreover, if $T$ is finite, then 
\begin{equation}
\label{eqn:blowup}
\limsup_{t\to T}\Vert \vec{U}(t,\cdot) \Vert_{H^{s}} = + \infty.
\end{equation}
\end{proposition}
\begin{proof}[Proof of Proposition \ref{prop:existence}]
\newcommand{\J}{\mathcal{J}_{\delta}}
\newcommand{\diff}{\mathbb{\Lambda}}
In order to prove Proposition \ref{prop:existence}, we first write a regularized system, for which classical results on ordinary differential equations (ODE) apply. We then show how to define the pressure term for this system, and that it is Lipschitz continuous with respect to the unknowns $(\vec{U},\eta)$, in the Sobolev spaces $H^s$. Then, we use an energy estimate and a compactness argument to extract a solution of the original system in $L^{\infty}([0,T),H^{s}(S_r)^{d+2})$. We then show that this solution is unique, and derive the blow-up criterion \eqref{eqn:blowup}. Finally, we show that the solution is in $C^0([0,T),H^{s}(S_r)^{d+2})$ and satisfies the divergence-free condition in \eqref{eqn:isopyc:gen}.\\
Let $M_0,h_*,h^*$ be positive constants, $T> 0$ to be fixed later and for $(\vec{U}, \eta)\in C^0([0,T],(H^{s})^{d+2})$, we write the conditions
\begin{hyp}
\label{hyp:existence:M}
\sup_{t \in [0,T]}\Vert \vec{U}, \eta\Vert_{H^s}  \leq 2M_0,
\end{hyp}
and
\begin{hyp}
\label{hyp:existence:h}
\frac12 h_* \leq 1+ h \leq 2 h^*.
\end{hyp}
We will construct a solution in the Banach space
\begin{equation}
\label{eqn:existence:def:B}
\mathcal{B} := \{ (\vec{U}, \eta)\in C^0([0,T],(H^{s})^{d+2}), \text{ \eqref{hyp:existence:M} and \eqref{hyp:existence:h} hold true.}\}.
\end{equation}
{\bf \underline{Step 0:} Some technical tools} \saut
Let $\mathcal{J}_{\delta}$ for $\delta > 0$ be the mollification operator defined with the convolution with a smooth cutoff function in the horizontal Fourier space (see  \cite{AJMajda2002}), to regularize $\nabla_x$. It satisfies the following properties, see \cite[Lemma 3.5]{AJMajda2002}
\begin{lemma}
\label{lemma:moll}
Let $s,t \in \R$, $f \in H^t(\R^d)$. For $\delta > 0$, we can write
\begin{equation}
\label{eqn:moll:borne}
\vert \mathcal{J}_{\delta} f \vert_{H^s(\R^d)} \leq C_{\delta,s,t} \vert f \vert_{H^t(\R^d)},
\end{equation}
for some constant $ C_{\delta,s,t}$. For $\delta' > 0$, we also have 
\begin{equation}
\label{eqn:moll:lip}
\vert (\mathcal{J}_{\delta} - \mathcal{J}_{\delta'})f\vert_{H^{t-1}} \leq C |\delta - \delta'| \vert f \vert_{H^{t}},
\end{equation}
where $C$ does not depend on $\delta$ nor $\delta'$.
\end{lemma}We will also need a Leray projector defined as follows
\begin{definition}
We define
\begin{equation}
\label{eqn:def:pi}
\begin{aligned} \Pi : L^2(S_r) &\to L^2(S_r) \\
						\vec{U} &\mapsto \vec{U} - \gradphi \psi \end{aligned}
\end{equation}
where $\psi \in \dot{H}^1/\R$ is the solution to
\begin{equation}
\begin{aligned}
\gradphi \cdot \gradphi \psi &= \gradphi \cdot \vec{U},\\
\partial_{r}^{\varphi} \psi &= 0 \qquad \text{ at } r=-1 \text{ and } r=0. \end{aligned}
\end{equation}
\end{definition}
We state the main properties of such an operator.
\begin{lemma}
\label{lemma:pi}
Let $(\vec{U},\eta) \in L^2(S_r)^{d+2}$ such that \eqref{hyp:existence:M}, \eqref{hyp:existence:h} hold, and let $s_0 > \frac{d}{2}$.
\begin{itemize}
\item The field $\Pi \vec{U}$ is divergence-free:
		$$ \gradphi \cdot \Pi \vec{U} = 0.$$
\item For $s \geq s_0+\frac52$, The operator $\Pi$ is bounded from $H^s(S_r)$ to $H^s(S_r)$, and its norm only depends on $M_0,h_*,h^*$.
\item If $\vec{U} \in H^1(S_r)$, there exists $C > 0$ such that
	\begin{equation}
	\label{eqn:pi:divcurl}
	\Vert \gradphi (\Pi\vec{U} - \vec{U}) \Vert_{L^2} \leq C \Vert \gradphi \cdot \vec{U} \Vert_{L^2}. 
	\end{equation}
\end{itemize} 
\end{lemma}
\begin{proof}
The existence of $\psi$ in the definition \eqref{eqn:def:pi} of $\Pi$ is a direct consequence of Proposition \ref{lemma:elliptic:1}, and the second point is a direct consequence of Proposition \ref{lemma:elliptic:2}, given \eqref{hyp:existence:M} and \eqref{hyp:existence:h}. The first point follows from the definition of $\psi$ in the definition \eqref{eqn:def:pi} of $\Pi$. For the last point, we take $\tilde{\psi}$ such that $\tilde{\psi}(t,x,\nouveau{-}r+\eta(t,x,r)) = \psi(t,x,r)$. By change of variables, \eqref{eqn:pi:divcurl} boils down to
\begin{equation}
	\label{eqn:pi:divcurl:tilde}
	\Vert \nabla (\nabla \tilde{\psi}) \Vert_{L^2} \leq C \Vert \nabla \cdot \nabla \tilde{\psi} \Vert_{L^2}. 
\end{equation}
This last inequality is a div-curl estimate (see for instance \cite[Lemma 2.1]{Amrouche1998}), as $\nabla \tilde{\psi}$ is a curl-free vector field, with zero normal trace to the boundary of the strip, and the boundary is flat so its curvature is zero. 	
\end{proof}\noindent{\bf\underline{Step 1:} Mollification} \saut
Let $\mathcal{J}_{\delta}$ for $\delta > 0$ the mollification operator defined above. We then introduce the following mollified system
\begin{equation}
\tag{\ensuremath{E_{\delta}}}
\label{eqn:moll}
\sys{ \partial_t \vec{U}_{\delta} + \mathcal{J}_{\delta} \left[ \mathcal{J}_{\delta}(\Vb + V_{\delta}) \cdot \nabla_x \mathcal{J}_{\delta} \vec{U}_{\delta} \right]+ \frac{1}{\varrho} \gradphi P_{\delta}  + \vec{b} & = 0, \\
		\partial_t \eta_{\delta} + \mathcal{J}_{\delta}\left[ \mathcal{J}_{\delta}(\Vb + V) \cdot \nabla_x \mathcal{J}_{\delta}\eta_{\delta} \right] - w_{\delta} &= 0,}
\end{equation}
together with the initial conditions
$$ \sys{(\vec{U}_{\delta})_{|t=0} &= \vec{U}_{\ini},\\
		(\eta_{\delta})_{t=0} &= \eta_{\ini}. }$$
We define the pressure $P_{\delta}$ through the elliptic equation
\begin{equation}
\label{eqn:existence:elliptic:ordre2}
\gradphi \cdot \frac{1}{\varrho} \gradphi P_{\delta} = - \gradphi \cdot \left((\Ub + \vec{U}) \cdot \gradphi \Pi (\Ub + \vec{U}) \right) - \gradphi \cdot \vec{b},
\end{equation}
completed with the boundary conditions 
\begin{equation}
\label{eqn:existence:elliptic:bc}
\nouveau{\frac{1}{\varrho}} \partial_r^{\varphi} P_{\delta} = \nouveau{-}\vec{e_{d+1}} \cdot \left( (\Ub + \vec{U}) \cdot \gradphi  \Pi (\Ub + \vec{U}) + \vec{b}\right)  \qquad \text{ at } r = 0 \text{ and } r=1 .
\end{equation}
Note that because $\gradphi \cdot \Pi (\Ub + \vec{U}) =0$, \eqref{eqn:existence:elliptic:ordre2} also writes
\begin{equation}
\label{eqn:existence:elliptic:ordre1}
\gradphi \cdot \frac{1}{\varrho} \gradphi P_{\delta} = - \partial_i^{\varphi} (\Ub +\vec{U}_j)  \partial_j^{\varphi} (\Pi (\Ub + \vec{U}))_i - \gradphi \cdot \vec{b}.
\end{equation}
Using the boundary condition $w =\Pi w=0 $ at the boundaries, the boundary condition \eqref{eqn:existence:elliptic:bc} can be recast as
\begin{equation}
\label{eqn:existence:elliptic:bc:ordre1}
\nouveau{\frac{1}{\varrho}}\partial_r^{\varphi} P_{\delta} =  \nouveau{-}\vec{e_{d+1}} \cdot \vec{b} \qquad \text{ at } r = 0 \text{ and } r=1;
\end{equation}
note the abuse of notation, as here $\Pi w$ denotes the vertical component of $\Pi \vec{U}$.\\
{\bf \underline{Step 2:} $(\vec{U},\eta) \mapsto \gradphi P $ is well-defined and locally Lipschitz continuous} \saut
Recall that $s \geq s_0+\frac52$. As for the proof of Proposition \ref{lemma:elliptic_high_reg}, the elliptic equation \eqref{eqn:existence:elliptic:ordre2} together with \eqref{eqn:existence:elliptic:bc} defines uniquely $P$ in $\dot{H}^1/\R$ according to Proposition \ref{lemma:elliptic:1}. As for the proof of Proposition \ref{lemma:elliptic_high_reg}, Proposition \ref{lemma:elliptic:2} also yields
\begin{equation}
\label{eqn:existence:elliptic:borne}
\Vert \gradphi P \Vert_{H^s} \leq C \Vert (\vec{U},\eta)\Vert_{H^s},
\end{equation}
where $C$ only depends on an upper bound on $\Vert (\vec{U},\eta)\Vert_{H^s}$, $M_{\ini}, c_*,c^*$.\\
Let now $(\vec{U},\eta),(\tilde{\vec{U}},\tilde{\eta}) \nouveau{\in \mathcal{B}}$. Let $P$ and $\tilde{P}$ be the pressures defined by the equations \eqref{eqn:existence:elliptic:ordre2} and \eqref{eqn:existence:elliptic:bc} with respect to the solution $(\vec{U},\eta)$ and $(\tilde{\vec{U}},\tilde{\eta})$ respectively. Recall that $\varphi$ and $\tilde{\varphi}$ are defined through \eqref{eqn:def:phi} from $\eta$ and $\tilde{\eta}$ respectively. Applying Proposition \ref{lemma:elliptic:2:tilde} to the equation \eqref{eqn:existence:elliptic:ordre1} with boundary conditions \eqref{eqn:existence:elliptic:bc:ordre1}, we get (recall that $\mu = 1$ here):
\begin{equation}
\label{eqn:existence:elliptic:temp1}
\begin{aligned}
\Vert \nabla^{\varphi} P - \nabla^{\tilde{\varphi}} \tilde{P} \Vert_{H^{s}} \leq C &\left( \nouveau{\Vert \nabla^{\varphi} P - \nabla^{\tilde{\varphi}} \tilde{P}\Vert_{H^{s_0+\frac32,2}}} +  \Vert \partial_i^{\varphi} (\Ub + \vec{U})_j \partial_j^{\varphi} (\Ub + \vec{U})_i - \partial_i^{\tilde{\varphi}} (\Ub + \tilde{\vec{U}})_j \partial_j^{\tilde{\varphi}} (\Ub + \tilde{\vec{U}})_i \Vert_{H^{s-1}} \right. \\
&+ \left. \Vert \vec{b} - \tilde{\vec{b}}\Vert_{H^s} + \Vert \eta - \tilde{\eta} \Vert_{H^s}\right),
\end{aligned}
\end{equation}
where $\tilde{\vec{b}}$ is defined from $\tilde{\eta}$ through \eqref{eqn:def:b}, and where $C>0$ depends on $M_0$. Now note that in the second term of the right-hand side of \eqref{eqn:existence:elliptic:temp1}, the terms of the form $\partial_i^{\varphi} \Ub_j \partial_j^{\varphi} \Ub_i$ are zero, either because the vertical component of $\Ub$ is zero, or because $\Ub$ does not depend on $x$. Then, the third and fourth terms on the right-hand side of \eqref{eqn:existence:elliptic:temp1} are the difference of multilinear terms, and as such are readily bounded from above by terms involving the differences $\vec{U}-\tilde{\vec{U}}$ or $\eta-\tilde{\eta}$; more precisely we can write
\begin{equation}
\label{eqn:existence:elliptic:temp2}
\Vert \nabla^{\varphi} P - \nabla^{\tilde{\varphi}} \tilde{P} \Vert_{H^{s}} \leq C \left(\Vert\vec{U} - \tilde{\vec{U}} \Vert_{H^s} + \Vert \eta - \tilde{\eta} \Vert_{H^s}  \nouveau{+\Vert \nabla^{\varphi} P - \nabla^{\tilde{\varphi}} \tilde{P}\Vert_{H^{s_0+\frac32,2}}}\right).
\end{equation}
It now remains to bound from above the last term on the right-hand side of \eqref{eqn:existence:elliptic:temp2}. This is not a direct consequence of Proposition \ref{lemma:elliptic:1}, but rather an adaptation of its proof, so that we write a brief argument here. Let us start with an estimate on $\Vert \nabla^{\varphi} P - \nabla^{\tilde{\varphi}} \tilde{P}\Vert_{L^2}$. To this end, multiply \eqref{eqn:existence:elliptic:ordre2} satisfied by $P,\vec{U}$ and $\eta$ by $1+h$ (where $h := - \partial_r \eta$),  multiply \eqref{eqn:existence:elliptic:ordre2} satisfied by $\tilde{P}, \tilde{\vec{U}}$ and $\tilde{\eta}$ by $1+\tilde{h}$ (where $\tilde{h} := - \partial_r \tilde{\eta}$), and take the difference of both equations to get
\begin{equation}
\label{eqn:existence:elliptic:temp:3}
(1+h)\gradphi \cdot \frac{1}{\varrho} \gradphi (P - \tilde{P}) = -(1+h)\gradphi \cdot \frac{1}{\varrho} \gradphi \tilde{P} +(1+\tilde{h})\nabla^{\tilde{\varphi}} \cdot \frac{1}{\varrho} \nabla^{\tilde{\varphi}} \tilde{P} + (1+h)\gradphi \cdot \vec{G} - (1+\tilde{h}) \nabla^{\tilde{\varphi}} \cdot \tilde{\vec{G}},
\end{equation}
where 
$$\vec{G} := -(\Ub + \vec{U}) \cdot \gradphi \Pi (\Ub + \vec{U})  -  \vec{b},$$
and $\tilde{\vec{G}}$ is defined similarly from $\tilde{\vec{U}}, \tilde{\vec{b}}$ and $\tilde{\varphi}$. The elliptic equation \eqref{eqn:existence:elliptic:temp:3} is completed with the boundary conditions \eqref{eqn:existence:elliptic:bc} satisfied by both $P$ and $\tilde{P}$. Now we multiply \eqref{eqn:existence:elliptic:temp:3} by $P-\tilde{P}$, and integrate by parts using Lemma \ref{apdx:IPP}. Using the boundary conditions \eqref{eqn:existence:elliptic:bc} satisfied by both $P$ and $\tilde{P}$, the boundary terms cancel each other out, and using \eqref{hyp:existence:rho} and \eqref{hyp:existence:h}, we eventually get
\begin{equation}
\label{eqn:existence:elliptic:temp:4}
\begin{aligned}
\left\Vert \sqrt{\frac{1+h}{\varrho}} \gradphi (P - \tilde{P}) \right\Vert_{L^2}^2 &= -\int_{S_r} (1+h)\frac{1}{\varrho} \gradphi \tilde{P}  \cdot \gradphi (P-\tilde{P}) + \int_{S_r} (1+\tilde{h}) \frac{1}{\varrho} \nabla^{\tilde{\varphi}} \tilde{P} \cdot \nabla^{\tilde{\varphi}} (P-\tilde{P})\\
& + \int_{S_r} (1+h)\vec{G} \cdot \gradphi(P-\tilde{P}) - \int_{S_r} (1+\tilde{h}) \tilde{\vec{G}}\cdot \nabla^{\tilde{\varphi}} (P-\tilde{P}).
\end{aligned}
\end{equation}
Now making differences of the form $P-\tilde{P}$, $\eta - \tilde{\eta}$ and $\vec{G}-\tilde{\vec{G}}$ appear in \eqref{eqn:existence:elliptic:temp:4} and bounding the right-hand side from above by the triangular inequality as well as using Hölder's inequality, \eqref{hyp:existence:rho} and \eqref{hyp:existence:h}, we get
\begin{equation}
\label{eqn:existence:elliptic:temp:5}
\begin{aligned}
\Vert  \gradphi (P - \tilde{P}) \Vert_{L^2}^2 &\leq  C\Vert ((1+h)\nabla^{\varphi} -(1+\tilde{h})\nabla^{\tilde{\varphi}}) \tilde{P} \Vert_{L^2} \Vert \gradphi (P-\tilde{P})\Vert_{L^2} + C \Vert h - \tilde{h} \Vert_{L^{\infty}} \Vert \nabla^{\tilde{\varphi}} \tilde{P} \Vert_{L^2} \Vert \gradphi (P-\tilde{P})\Vert_{L^2}\\
&+ C\Vert \nabla^{\tilde{\varphi}} \tilde{P} \Vert_{L^2} \Vert ((1+h)\gradphi - (1+\tilde{h})\nabla^{\tilde{\varphi}}) (P-\tilde{P})\Vert_{L^2} + C\Vert \vec{G}- \tilde{\vec{G}} \Vert_{L^2} \Vert (1+h)\gradphi(P-\tilde{P})\Vert_{L^2} \\
&+ C\Vert \tilde{\vec{G}} \Vert_{L^2} \Vert ((1+h)\gradphi- (1+\tilde{h})\nabla^{\tilde{\varphi}})(P-\tilde{P})\Vert_{L^2}.
\end{aligned}
\end{equation}
Now using
$$ (1+h)\nabla^{\varphi} - (1+\tilde{h})\nabla^{\tilde{\varphi}} = (h-\tilde{h}) \begin{pmatrix} \nabla_x \\ 0 \end{pmatrix} + \begin{pmatrix}
\nabla_x (\eta - \tilde{\eta}) \\ 0 \end{pmatrix} \partial_r,$$
as well as the estimates \eqref{eqn:existence:elliptic:borne}, \eqref{hyp:existence:M}, \eqref{hyp:existence:h} we get that \eqref{eqn:existence:elliptic:temp:5} reduces to
\begin{equation}
\label{eqn:existence:elliptic:temp:6}
\Vert  \gradphi (P - \tilde{P}) \Vert_{L^2} \leq C \Vert (\vec{U} - \tilde{\vec{U}}, \eta-\tilde{\eta}) \Vert_{H^{s_0+\frac32,2}};
\end{equation}
where we also used \cite[Lemma A.1]{Duchene2022}, the definition of $\vec{G},\vec{\tilde{G}}$, Lemma \ref{lemma:pi}, and
$$ \Vert \gradphi(P-\tilde{P}) \Vert_{L^2} \leq c_1 \Vert \nabla(P-\tilde{P}) \Vert_{L^2} \leq c_2 \Vert \gradphi(P-\tilde{P}) \Vert_{L^2},$$
for some $c_1,c_2 > 0$, which stems from the product estimate \eqref{apdx:pdt:tame}. Note that $c_1$ and $c_2$ depend on $M_0$ as defined in \eqref{hyp:existence:M}. Using \eqref{eqn:existence:elliptic:temp:6}, we can conclude for the $L^2$ estimate
\begin{equation}
\label{eqn:existence:elliptic:temp:7}
\Vert  \gradphi P - \nabla^{\tilde{\varphi}}\tilde{P} \Vert_{L^2} \leq \Vert \gradphi(P-\tilde{P}) \Vert_{L^2} + \Vert (\gradphi - \nabla^{\tilde{\varphi}}) \tilde{P} \Vert_{L^2} \leq C \Vert (\vec{U} - \tilde{\vec{U}}, \eta-\tilde{\eta}) \Vert_{H^{s_0+\frac32,2}},
\end{equation}
after using \eqref{eqn:existence:elliptic:borne} and
$$\gradphi - \nabla^{\tilde{\varphi}} = \begin{pmatrix} \frac{\nabla_x (\eta - \tilde{\eta})}{1+h} + \nabla_x \tilde{\eta} \frac{\tilde{h} - h}{(1+h)(1+\tilde{h})} \\ \frac{-\partial_r (\eta - \tilde{\eta})}{1+h} - \partial_r \tilde{\eta} \frac{\tilde{h} - h}{(1+h)(1+\tilde{h})} \end{pmatrix} \partial_r,$$
which stems from the definition \eqref{eqn:def:gradphi} of $\gradphi$ and $\nabla^{\tilde{\varphi}}$. Now to obtain an estimate on $ \Vert  \gradphi P - \nabla^{\tilde{\varphi}}\tilde{P} \Vert_{H^{s_0+\frac32,2}}$, we use the interpolation from \cite[Theorem 5.2 (3)]{AdamsFournier2003}. We denote by $N := \lceil s_0+\frac32 \rceil$ the smallest integer that is greater than or equal to $s_0+\frac32$. We have $N \geq 2$ from $d \geq 1$ and the definition of $s_0$ from Proposition \ref{prop:existence}. As $s$ defined in the statement of Proposition \ref{prop:existence} is an integer that satisfies $s \geq s_0+\frac52$, we thus have $N+1\leq s$. Thus, from \cite[Theorem 5.2 (3)]{AdamsFournier2003}, there exists $\alpha \in (0,1)$ such that
\begin{equation}
\label{eqn:existence:elliptic:temp:8}
\Vert \gradphi P - \nabla^{\tilde{\varphi}} \tilde{P} \Vert_{H^{s_0+\frac32,2}} \leq \Vert \gradphi P - \nabla^{\tilde{\varphi}} \tilde{P} \Vert_{H^{N}} \leq C \Vert \gradphi P - \nabla^{\tilde{\varphi}} \tilde{P} \Vert_{L^2}^{\alpha} \Vert \gradphi P - \nabla^{\tilde{\varphi}} \tilde{P} \Vert_{H^{s}}^{1-\alpha}.
\end{equation}
Plugging \eqref{eqn:existence:elliptic:temp:8} into \eqref{eqn:existence:elliptic:temp2}, using \eqref{eqn:existence:elliptic:temp:7} and using Young's inequality, we get
\begin{equation}
\label{eqn:existence:elliptic:temp9}
\Vert \nabla^{\varphi} P - \nabla^{\tilde{\varphi}} \tilde{P} \Vert_{H^{s}} \leq C \left(\Vert\vec{U} - \tilde{\vec{U}} \Vert_{H^s} + \Vert \eta - \tilde{\eta} \Vert_{H^s} \right) + \frac{1}{2} \Vert \nabla^{\varphi} P - \nabla^{\tilde{\varphi}} \tilde{P}\Vert_{H^{s}},
\end{equation}
i.e., after putting the last term on the right-hand side of \eqref{eqn:existence:elliptic:temp9} on its left-hand side, the gradient of the pressure is locally Lipschitz continuous in $H^{s}$. \\ 
{\bf \underline{Step \nouveau{3}:} Existence and uniqueness} \saut
We now follow the steps of \cite{AJMajda2002} for the existence. As the system \eqref{eqn:moll} is a system of ODEs with a locally Lipschitz continuous source term according to the regularizing property in Lemma \ref{lemma:moll} and Step 2, Cauchy-Lipschitz theorem yields a solution $(\vec{U},\eta)_{\delta}$ on a time $T_{\delta} > 0$ for any $\delta > 0$, in the Banach space $\mathcal{B}$ defined in \eqref{eqn:existence:def:B}.
We use a continuity argument to show that the times of existence $T_{\delta}$ are bounded from below by some $T > 0$ independent of $\delta$.\\
We start by writing the following classical energy estimate (see for example \cite[Proposition 3.7]{AJMajda2002}), where $\gradphi P + \vec{b}$ is treated as a source term:
\begin{equation}
\label{eqn:existence:nrj}
\frac{d}{dt} \Vert (\vec{U}_{\delta}(t,\cdot),\eta_{\delta}(t,\cdot)) \Vert_{H^s}^2 \leq C \Vert (\vec{U}_{\delta}(t,\cdot),\eta_{\delta}(t,\cdot)) \Vert_{H^s}^2,
\end{equation}
where $C > 0$ is a constant depending only on $M_0,h_*,h^*$. Suppose that $T_{\delta}$ is finite, for a certain $\delta > 0$.
Then, Grönwall lemma allows us to bound $\Vert (\vec{U}_{\delta}(t,\cdot),\eta_{\delta}(t,\cdot)) \Vert_{H^s}^2$ by $e^{Ct}M_{0}$. Taking $T < \ln(2 M_0) \frac{1}{C}$, we have that:
$$\Vert (\vec{U}_{\delta}(t,\cdot),\eta_{\delta}(t,\cdot)) \Vert_{H^s}^2 < 2 M_0.$$
Recall the equation on $h_{\delta}$, obtained by differentiating the equation on $\eta_{\delta}$ in \eqref{eqn:moll}:
\begin{equation}
\label{eqn:existence:h:equation}
\partial_t h_{\delta}  = \partial_r \left( \J(\J (\Vb + V_{\delta}) \cdot \nabla_x \J \eta_{\delta}) - w \right).
\end{equation}
Integrating this equation in time allows us to bound $1+h_{\delta}$ from above and from below by a continuous function of $\Vert(\vec{U}_{\delta},\eta_{\delta}) \Vert_{H^{s_0+\frac52}}$, namely
\begin{equation}
\label{eqn:h:bound}
1-t\Vert (\vec{U}_{\delta},\eta_{\delta}) \Vert_{L^{\infty}([0,t],H^{s_0+\frac52})}\leq 1+h_{\delta}(t,x,r) \leq 1+t\Vert (\vec{U}_{\delta},\eta_{\delta}) \Vert_{L^{\infty}([0,t],H^{s_0+\frac52})}.
\end{equation}
Therefore, for $T$ small enough, we still have $\frac12 h_* \leq 1+h(t,x,r) \leq 2 h^*$ for $t < T$ and all $(x,r)\in S_r$. Hence we can use Cauchy-Lipschitz theorem once again to extend the previous solution up to a time $T$, independent of $\delta$. \\
The $L^2$ energy estimate \eqref{eqn:existence:nrj} yields that $((\vec{U}_{\delta},\eta_{\delta}))_{\delta}$ is a Cauchy sequence in $C^0([0,T],L^2(S_r))$ with convergence rate $\delta$. Thus, by interpolation (see \cite[Theorem 5.2 (3)]{AdamsFournier2003}), it is also a Cauchy sequence in $H^{s-1}$. The convergence is thus strong in $H^{s_0+\frac32,2}$, so that we may pass to the limit in every non-linear term in \eqref{eqn:isopyc:gen} and the limit $(\vec{U},\eta)$ is indeed a solution of \eqref{eqn:isopyc:gen}. Finally, as the sequence  $((V_{\delta},w_{\delta},\eta_{\delta}))$ is bounded in $H^{s}$, it has a weak limit by Banach-Alaoglu's theorem, and this weak-limit coincides with the strong limit in $H^{s_0+\frac32,2}$. We finally get the existence of a solution of \eqref{eqn:isopyc:gen} in $C^0([0,T],H^{s-1}) \cap L^{\infty}([0,T],H^s)$. \\
Uniqueness follows from the $L^2$ energy estimate \eqref{eqn:existence:nrj} performed on \eqref{eqn:isopyc:gen} directly (or, equivalently, on \eqref{eqn:moll} with $\delta = 0$).\\
{\bf \underline{Step \nouveau{4}:} Continuity in time of the solution} \saut
Following very closely \cite[Theorem 3.5]{AJMajda2002}, we can prove the following 
$$\Vert (\vec{U}(t,\cdot) - \vec{U}(t_0,\cdot), \eta(t,\cdot) - \eta(t_0,\cdot)) \Vert_{H^s} \underset{t \to t_0}{\rightarrow} 0,$$
for any $t_0 \in [0,T)$. The proof does not need any adaptation and we omit it.\\
{\bf \underline{Step \nouveau{5}:} Blow-up criterion} \saut
Let us assume that $(\vec{U},\eta)$ is a bounded solution of \eqref{eqn:isopyc:gen} in $C^0([0,T],H^s)$ for some $T > 0$, satisfying the assumptions \eqref{hyp:existence:M} and \eqref{hyp:existence:h}. Then by using the solution of \eqref{eqn:isopyc:gen}	constructed in Step 3 starting from $(\vec{U}(T),\eta(T))$ at initial time $T$, as well as its uniqueness, we may extend $(\vec{U},\eta)$ on $[0,T']$ for some $T' > T$. Therefore, if $T^*$ is the maximal time of existence of this solution, then either \eqref{eqn:blowup} holds or
\begin{equation}
\label{eqn:blowup:eta} 
\limsup_{t\to T} \Vert \eta(t,\cdot)\Vert_{H^s} = + \infty,
\end{equation}
or
\begin{equation}
\label{eqn:blowup:h} 
\liminf_{t\to T} \inf_{(x,r)\in S_r} 1+h = 0.
\end{equation}
We now show that the case \eqref{eqn:blowup} happens in any case. Standard energy estimates on the equation on $\eta$ in \eqref{eqn:isopyc:gen} yield
\begin{equation}
\label{eqn:eta:nrj}
\frac{d}{dt} \Vert \eta \Vert_{H^s} \leq C (\Vert \eta \Vert_{H^s} + \Vert \vec{U} \Vert_{H^s} ),
\end{equation}
where $C$ only depends on an upper bound on $\Vert \vec{U} \Vert_{H^s}$. Thus, by Grönwall Lemma, if \eqref{eqn:blowup:eta} holds true, then so does \eqref{eqn:blowup}.\\
Assume now that \eqref{eqn:blowup:h} holds for some finite $T > 0$. Applying $- \partial_r$ and using the divergence-free condition in \eqref{eqn:isopyc:gen} yields
\begin{equation}
\label{eqn:existence:h}
\partial_t (1+h) + \nabla_x \cdot ((1+h)(\Vb + V)) = 0.
\end{equation}
We can now solve this equation explicitly, let $t \in [0,T),(x,r)\in S_r$. Let $x \in C^1([0,t])$ be the solution of the ODE 
$$ x'(s) =( \Vb(r) + V(s,x(s),r)),$$
with terminal condition $x(t)= x$, and call $x_0 := x(0)$. Then, one can check that 
$$1+h(t,x,r) = (1+h_{\ini}(x_0,r)) e^{-\int_0^t \nabla_x \cdot (\Vb(r) + V(s,x(s),r))ds}.$$
Hence if \eqref{eqn:blowup:h} holds but the initial data $h_{\ini}$ satisfies \eqref{hyp:existence:h}, then \eqref{eqn:blowup} must hold.\\
{\bf \underline{Step \nouveau{6}:} Propagation of the divergence-free condition} \saut
Using the equation on $\eta$ in \eqref{eqn:isopyc:gen}, we can write the equation on $\vec{U}$ in \eqref{eqn:isopyc:gen} as
$$\partial_t^{\varphi} (\Ub + \vec{U}) + (\Ub + \vec{U}) \cdot \gradphi (\Ub + \vec{U}) + \gradphi P + \vec{b} =0,$$
as $\Ub$ does not depend on time. Applying $\gradphi \cdot$ to this equation and using the equation \eqref{eqn:existence:elliptic:ordre1} satisfied by the pressure, we get
\begin{equation}
\label{eqn:existence:div}
\partial_t^{\varphi} \gradphi \cdot(\Ub + \vec{U}) + \partial_i^{\varphi} (\Ub + \vec{U})_j \partial_j^{\varphi}(\Pi (\Ub + \vec{U}) -  (\Ub + \vec{U}))_i + (\Ub + \vec{U}) \cdot \gradphi \left( \gradphi \cdot (\Ub + \vec{U}) )\right) = 0.
\end{equation}
Testing this equation against $(1+h)\gradphi \cdot (\Ub + \vec{U})$, we get
\begin{equation}
\label{eqn:existence:div:nrj}
\frac{d}{dt} \Vert \gradphi \cdot (\Ub + \vec{U}) \Vert_{L^2} \leq C \left( \Vert \gradphi \cdot (\Ub + \vec{U}) \Vert_{L^2} + \Vert \gradphi\left(\Pi (\Ub + \vec{U}) - (\Ub + \vec{U}) \right) \Vert_{L^2} \right),
\end{equation}
where $C$ depends on $\Mb,M_{\ini}, h_*,h^*$. Using the estimate \eqref{eqn:pi:divcurl} and Grönwall Lemma, we get
$$\Vert \gradphi \cdot (\Ub + \vec{U})(t,\cdot)\Vert_{L^2} \leq \Vert \gradphi \cdot (\Ub + \vec{U})(0,\cdot)\Vert_{L^2} e^{Ct}.$$
Hence the divergence-free condition in \eqref{eqn:isopyc:gen} is propagated by the equation if it holds initially.
\end{proof}
\begin{remark}
\label{rk:slag_existence}
In Step $3$, we crucially used that the only unbounded operator in \eqref{eqn:euler_slag} is $\nabla_x$, which we can easily mollify using Fourier transform as $x \in \R^d$. In Eulerian coordinates, note that there is a term $w \partial_z V$, see for instance \eqref{eqn:euler_euleriennes}. Thus, as we do not have any boundary condition on $V$, such a regularization is not possible. Therefore, the proof in Eulerian coordinates would require other ingredients. 
\end{remark}
\begin{remark}
\label{rk:flat_bd}
In Step 6 of the proof of Proposition \ref{prop:existence}, we used the div-curl estimate \eqref{eqn:pi:divcurl}. With a non-flat bottom or rigid lid, this estimate would contain boundary terms on the right-hand side involving the trace of $\vec{U}$, and therefore the bootstrap argument in Step 6 would not hold true. Hence, the proof in the case of non-flat boundaries would require adaptations.
\end{remark}
\begin{remark}
\label{rk:free-surf}
This strategy can be adapted to the free-surface case, however, we leave this adaptation for a future work.
\end{remark}
To conclude the proof of Theorem \ref{thm:wp}, we apply Proposition \ref{prop:existence} with 
$$\begin{aligned}
\vec{U} = \begin{pmatrix}
\epsilon V \\ \sqrt{\mu}\epsilon w 
\end{pmatrix}, \qquad 
\eta = \epsilon \eta, \qquad
\vec{b} = (0,b)^T,
\end{aligned}$$
where $b$ is defined through \eqref{eqn:def:b}, and with $\Ub = (\Vb^T,0)^T$. Therefore, there exists a unique solution of \eqref{eqn:euler_slag} with initial conditions $V_{\ini},w_{\ini},\eta_{\ini}$ satisfying the assumptions in Theorem \ref{thm:wp}. The blow-up criterion \eqref{eqn:blowup} together with the estimate of Proposition \ref{lemma:energy} and the assumptions $\epsilon\leq \sqrt{\mu}$,  $|\Vb'|_{L^{\infty}} \leq \sqrt{\mu}$ yield a time of existence independent of $\epsilon$ and $\mu$.
\subsection{Well-posedness in Eulerian coordinates}
\label{subsection:eulerian}
We now state the result of Theorem \ref{thm:wp} in Eulerian coordinates, for the sake of completeness. Let us first detail the assumptions necessary for Corollary \ref{cor:euleriennes} to hold. \\
 Let $ s \in \N$. First, let $\Mb > 0$. We assume that $\Vb, \rhob \in W^{s+1,\infty}$ and
\begin{hyp}
\label{hyp:Mb:euleriennes}
| \eul{\Vb} |_{W^{s+1,\infty}} + |\rhob|_{W^{s+1,\infty}} \leq \Mb.
\end{hyp}Second, let $M_{\ini} > 0$. If $(\eul{V}_{\ini},\eul{w}_{\ini},\rho_{\ini}) \in H^{s}(S_z)^{d+2}$ correspond to initial data, then 
\begin{hyp}
\label{hyp:Min:euleriennes}
 \Vert \eul{V}_{\ini} \Vert_{H^{s}}^2 + \mu \Vert \eul{w}_{\ini} \Vert_{H^{s}}^2 + \Vert \rho_{\ini} \Vert_{H^{s}}^2 \leq M_{\ini}.
\end{hyp}Moreover, the initial data satisfy the boundary conditions and divergence-free condition :
\begin{hyp}
\label{hyp:wp:euleriennes}
\begin{aligned}  \left. \eul{w}_{\ini} \right._{|z=0} = \left. \eul{w}_{\ini} \right._{|z=-1} &= 0, \\
\rho_{\ini|z=0} = \rho_{\ini|z=-1} &= 0,\\
					 \nabla_x \cdot \eul{V}_{\ini}  +  \partial_z \eul{w}_{\ini} &= 0. \end{aligned} 
\end{hyp}The stratification is stable, both at equilibrium and with the initial perturbation, i.e. there exists $c_* > 0$ such that:
\begin{hyp}
\label{hyp:stable:euleriennes}
\begin{aligned}
-\rhob' &\geq c_*,\\
\nouveau{-\partial_r( \rhob + \epsilon \rho_{\ini})} &\nouveau{\geq c_*.}
\end{aligned}
\end{hyp}We also assume the non-cavitation assumption on the initial data,
\begin{hyp}
\label{hyp:non_cavitation_eul}
\rhob + \epsilon \rho_{\ini} \geq \rho_* > 0.
\end{hyp}We now state our main result in Eulerian coordinates.
\begin{corollary}
\label{cor:euleriennes}
Let $d \in \N^*$, $s_0>\frac{d}{2}$, $s \in \N$ with $s \geq s_0+\frac52$. Let $\eul{\Vb}, \rhob \in W^{s+1,\infty}$ satisfying \eqref{hyp:Mb:euleriennes}, and $(\eul{V}_{\ini},\eul{w}_{\ini},\rho_{\ini}) \in H^{s}(\R^d\times[-1,0])^{d+2}$ satisfying \eqref{hyp:Min:euleriennes}, \eqref{hyp:wp:euleriennes}, \eqref{hyp:stable:euleriennes}, \eqref{hyp:non_cavitation_eul}. We also make the assumptions
	$$\sys{ \epsilon &\leq \sqrt{\mu}, \\ |\Vb'|_{L^\infty} &\leq \sqrt{\mu}. }$$
Then there exists $T > 0$ depending only on $\Mb, M_{\ini}$ such that the following holds. There exists a unique solution $(\eul{V},\eul{w},\nouveau{\rho}) \in C^0([0,T),H^{s}(S_z)^{d+2})$ to \eqref{eqn:euler_euleriennes:pert} with initial data $(\eul{V}_{\ini},\eul{w}_{\ini},\rho_{\ini})$.
\end{corollary}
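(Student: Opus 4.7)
The plan is to reduce the Eulerian problem to its isopycnal counterpart and invoke Theorem \ref{thm:wp} directly. First, I would construct isopycnal initial data from the Eulerian ones. Thanks to \eqref{hyp:stable:euleriennes}, the map $z \mapsto (\rhob + \epsilon \rho_{\ini})(x,z)$ is strictly decreasing for each fixed $x$. Setting $\varrho(r) := \rhob(-r)$, I would define $\eta_{\ini}(x,r)$ implicitly via the relation
$$ (\rhob + \epsilon \rho_{\ini})(x, -r + \epsilon \eta_{\ini}(x,r)) = \varrho(r), $$
so that $\varphi_{\ini}(x,r) := (x, -r + \epsilon \eta_{\ini}(x,r))$ is a $C^1$ diffeomorphism, and then $V_{\ini} := \eul{V}_{\ini}\circ \varphi_{\ini}$, $w_{\ini} := \eul{w}_{\ini}\circ \varphi_{\ini}$. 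The implicit function theorem together with the composition estimates of Appendix \ref{appendixA} give the required $H^s$-regularity, with $\eta_{\ini}, V_{\ini}, w_{\ini}$ controlled by constants depending only on $\Mb$ and $M_{\ini}$.

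Next, I would check that the assumptions of Theorem \ref{thm:wp} are satisfied. Assumption \eqref{hyp:Mb:euleriennes} transfers to \eqref{hyp:Mb} via the identifications $\Vb(r) = \eul{\Vb}(-r)$ and $\varrho(r) = \rhob(-r)$, and \eqref{hyp:stable:euleriennes} yields \eqref{hyp:stable} since $\varrho' = -\rhob'$. The smallness conditions $\epsilon \leq \sqrt{\mu}$ and $|\Vb'|_{L^\infty} \leq \sqrt{\mu}$ are preserved since they only involve equilibrium quantities. The assumption \eqref{hyp:Min:euleriennes} yields \eqref{hyp:Min} by composition estimates applied to $\eul{V}_{\ini}\circ \varphi_{\ini}$ and $\eul{w}_{\ini}\circ \varphi_{\ini}$. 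The incompressibility constraint in \eqref{hyp:wp:euleriennes}, pulled back through $\varphi_{\ini}$ by the chain rule, becomes exactly the second line of \eqref{hyp:wp}; the boundary conditions on $w_{\ini}$ follow from those on $\eul{w}_{\ini}$ since $\varphi_{\ini}$ maps $\{r=0,1\}$ to $\{z=0,-H\}$. Finally, \eqref{hyp:h} follows from the strict monotonicity provided by \eqref{hyp:stable:euleriennes}, after choosing $h_*, h^*$ appropriately.

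Applying Theorem \ref{thm:wp}, I obtain a unique solution $(V,w,\eta) \in L^\infty([0,T), H^s(S_r)^{d+2})$ of \eqref{eqn:euler_slag}, with $T > 0$ depending only on $\Mb$ and $M_{\ini}$. The lower bound on $1+\epsilon h$ is propagated in time (as shown in Step 3 of the proof of Theorem \ref{thm:wp}), so $\varphi(t,x,r) := (t,x,-r+\epsilon\eta(t,x,r))$ remains a $C^1$ diffeomorphism on $[0,T) \times \R^d \times [0,1]$. I then define the Eulerian unknowns by
$$ \eul{V} := V \circ \varphi^{-1}, \quad \eul{w} := w \circ \varphi^{-1}, \quad \rho \text{ via } \rho(t,x,z) := \varrho\bigl((\eta(t,x,\cdot))^{-1}(z)\bigr) / \epsilon \text{ (in perturbation form)}. $$
Since \eqref{eqn:euler_slag} was obtained from \eqref{eqn:euler_euleriennes:pert} exactly by pulling back through $\varphi$ (Section 1 of the paper), applying $\varphi^{-1}$ and the chain rule recovers a solution of \eqref{eqn:euler_euleriennes:pert}, and the composition estimates of Appendix \ref{appendixA} yield the required $H^s(S_z)$-regularity.

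For uniqueness, given any Eulerian solution with the stated regularity, the stable stratification assumption combined with the transport equation on $\rho$ ensures that one can construct a function $\eta$ parametrizing the isopycnals; the pulled-back triple $(V,w,\eta)$ then solves \eqref{eqn:euler_slag} with the same initial data, so uniqueness in Theorem \ref{thm:wp} forces uniqueness here. The main technical obstacle is the systematic use of composition estimates across the nonlinear change of coordinates $\varphi$, both for the initial data transfer and for converting $H^s$ estimates in isopycnal coordinates to $H^s$ estimates in Eulerian coordinates; however, these are exactly the tools already developed in the paper and no new ideas are required.
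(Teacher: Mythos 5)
Your overall strategy --- build isopycnal data from the Eulerian ones, invoke Theorem \ref{thm:wp}, and push the solution back through $\varphi$ --- is exactly the paper's. However, there is one concrete gap in the transfer of the velocity: the decomposition into background plus perturbation does not commute with the change of coordinates, so the definition $V_{\ini} := \eul{V}_{\ini}\circ\varphi_{\ini}$ is not the right one. The isopycnal system \eqref{eqn:euler_slag} is written for the perturbation of the $x$-independent equilibrium $\Vb(r) = \eul{\Vb}(\etab(r))$, whereas the pullback of the total Eulerian velocity is
$$(\eul{\Vb}+\epsilon\eul{V}_{\ini})\bigl(\etab(r)+\epsilon\eta_{\ini}\bigr) = \Vb(r) + \bigl(\eul{\Vb}(\etab(r)+\epsilon\eta_{\ini}) - \eul{\Vb}(\etab(r))\bigr) + \epsilon\,\eul{V}_{\ini}\circ\varphi_{\ini}.$$
The middle term is $O(\epsilon)$ but nonzero whenever $\eul{\Vb}$ is non-constant, so it must be absorbed into the perturbation: one has to use the effective velocity $V_{\ini}^{\eff} := \eul{V}_{\ini}\circ\varphi_{\ini} + \frac{1}{\epsilon}\bigl(\eul{\Vb}(\etab+\epsilon\eta_{\ini}) - \Vb\bigr)$, which is of order one by a Taylor expansion or the composition estimates. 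With your $V_{\ini}$, the claim that the pulled-back incompressibility condition ``becomes exactly the second line of \eqref{hyp:wp}'' fails: that condition involves $\gradphi[x]\cdot(\Vb+\epsilon V_{\ini})$, and $\Vb + \epsilon V_{\ini} \neq (\eul{\Vb}+\epsilon\eul{V}_{\ini})\circ\varphi_{\ini}$ under your definition. The same correction is needed in the reverse direction when reconstructing $\eul{V}$ from the isopycnal solution; otherwise the recovered Eulerian field is the perturbation of an $x$-dependent shear and does not solve \eqref{eqn:euler_euleriennes:pert} with background $\eul{\Vb}(z)$. Once this effective velocity is inserted at both ends, the rest of your argument (regularity of $\eta_{\ini}$ via the stable stratification and composition estimates, verification of the hypotheses of Theorem \ref{thm:wp}, and uniqueness by pulling back any Eulerian solution) goes through as in the paper.
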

\begin{proof}
Let us define the quantities involved in Theorem \ref{thm:wp}. Let $(x,r) \in S_r$ and
$$\begin{aligned}
\etab(r) &:= -r, \qquad & 
\varrho(r) &:= \rhob(\etab(r)), \\
\Vb &:= \eul{\Vb}(\etab(r)), \qquad &
\epsilon \eta_{\ini}(x,r) &:= \left( (\rhob + \epsilon \rho_{\ini})^{-1}(x,\nouveau{\varrho(r)}) - \etab(r) \right), \\
V_{\ini}(x,r) &:= \eul{V}_{\ini}(\etab(r) + \epsilon \eta_{\ini}(x,r)),  \qquad & 
V _{\ini}^{\eff}(x,r) &:=  V _{\ini}(x,\nouveau{\etab(r) + \epsilon \eta(t,x,r)}) + \frac{1}{\epsilon} \left( \eul{\Vb}(\etab(r) + \epsilon \eta_{\ini}(x,r)) - \Vb(r) \right), \\
w_{\ini}(x,r) &:= \eul{w}_{\ini}(\etab(r)+ \epsilon \eta_{\ini}(x,r)). \\
\end{aligned}$$ 
Note the second term in $V _{\ini}^{\eff}$ that, in spite of the factor $\frac{1}{\epsilon}$, is of order $1$ (by Taylor expansion, or composition estimates). Because of this term, we can write
$$ (\Vb + \epsilon V _{\ini}^{\eff})(x,r) = (\eul{\Vb} + \epsilon \eul{V}_{\ini})(x,\etab(r)+\epsilon \eta_{\ini}(x,r)), $$
and $\Vb$ is independent of $x$. \\
Theorem \ref{thm:wp} then claims that there exists a unique solution $(V ^{\eff}, w, \eta)$ to \eqref{eqn:euler_slag}, related to the quantities at equilibrium and initial conditions above. We can then define 
$$\sys{ \epsilon \rho(t,x,z) &:= \nouveau{\varrho}(\nouveau{(\etab + \epsilon \eta)^{-1}}(t,x,z)) - \rhob(z), \\
\eul{V}(t,x,z) &:= V ^{\eff}(t,x,\nouveau{(\etab + \epsilon \eta)^{-1}}(t,x,z)) -  \frac{1}{\epsilon} \left( \eul{\Vb}(z) - \Vb(\nouveau{(\etab + \epsilon \eta)^{-1}}(t,x,z)) \right), \\
\eul{w}(t,x,z) &:= w(t,x,\nouveau{(\etab + \epsilon \eta)^{-1}}(t,x,z)). 
}$$
Note that the definition of $V^{\eff}$ is such that $\eul{\Vb} + \epsilon \eul{V} = (\Vb + \epsilon V ^{\eff})(t,x,\nouveau{(\etab + \epsilon \eta)^{-1}(t,x,r)})$. \\
Changing variables in \eqref{eqn:euler_phi} shows that $(\eul{V},\eul{w},\rho)$ is indeed a solution to \eqref{eqn:euler_euleriennes} with the desired boundary conditions and initial data. \\
For the uniqueness, note that two solutions of \eqref{eqn:euler_euleriennes} with the same initial data yield, via change of variables, two solutions of \eqref{eqn:euler_slag}. Thus, according to Theorem \ref{thm:wp}, the solutions are the same. 
\end{proof}
\appendix
\section{Product, commutator and composition estimates}
\label{appendixA}
In this appendix we gather technical results in Sobolev spaces, such as product, commutator and composition estimates. Most of these results are similar to results from \cite{Lannes2013} and \cite{Duchene2022}.
\begin{lemma}[Integration by parts in isopycnal coordinates]
\label{apdx:IPP}
Let $s_0 > d/2$, \nouveau{$\eta \in H^{s_0+\frac52,3}(S_r)$ satisfying \eqref{hyp:h} and the last two boundary conditions in \eqref{eqn:euler_slag:bc},} and $h := - \partial_r \eta$. Let $\varphi$ be defined from $\eta$ through \eqref{eqn:def:phi}. Let $\vec{f} \in H^{1}(S_r)^{d+1}$ and $g$ are in $H^{1}(S_r)$, then\\
$$\int_{S_r} (1+\epsilon h) g \nabla^{\varphi} \cdot \vec{f} \ dx dr= \int_{r=0} \vec{f}\cdot\vec{e_{d+1}} g dx \nouveau{-} \int_{r=1} \vec{f}\cdot\vec{e_{d+1}} gdx - \int_{S_r} (1+\epsilon h) \vec{f} \cdot  \nabla^{\varphi} g dx dr,$$
where $\vec{e_{d+1}}$ is the vertical upward vector, i.e. the normal vector to the bottom and rigid lid, pointing upward.
\end{lemma}
\begin{proof}
The regularity of $\eta$ together with the classical Sobolev embedding and \cite[Lemma A.1]{Duchene2022} yields that $\varphi$ is a $\mathcal{C}^1$ diffeomorphism. The proof then consists in going back in Eulerian coordinates to use the standard integration by parts formula, and then applying the isopycnal change of coordinates again. Note that $(1+\epsilon h)$ is the Jacobian of the isopycnal change of coordinates.
\end{proof}



\begin{lemma}[Product estimates in anisotropic spaces, \cite{Duchene2022}]
\label{apdx:pdt}
Let $d\in \N^*$, $s_0 > d/2$, $s \in \R$ and $k \in \N$ with $0 \leq k \leq s$.
\begin{itemize}[label=\textbullet]
\item There exists $C > 0$ such that for any $f \in L^2(S_r)$ and $g \in H^{s_0+\frac12,1}(S_r)$,  we can write
\begin{equation}
\label{apdx:pdt:tame}
\Vert fg\Vert_{L^2} \leq C \Vert f \Vert_{L^2} \Vert g \Vert_{H^{s_0+1/2,1}}.
\end{equation}
\item If \nouveau{$f \in W^{k,\infty}([0,1])$ and $g \in H^{s,k}(S_r)$}, then 
\begin{equation}
\label{apdx:pdt:infty}
 \Vert fg \Vert_{H^{s,k}} \leq C | f |_{W^{k,\infty}} \Vert g \Vert_{H^{s,k}}.
 \end{equation}
\end{itemize}
\end{lemma}
\begin{proof}
Both estimates are standard; their proof is a minor adaptation from \cite[Lemma A.3]{Duchene2022}.
\end{proof}
We now state commutator estimates in $H^{s,k}$ spaces. Note the use of $H^{s-1,k\wedge(s-1)}$-norms, so that in the case $k=s$, we gain one derivative. 
\begin{lemma}[Commutator estimates] 
\label{apdx:commutator}
Let $d\in \N^*$, $s_0 > d/2$ and $s >0$.
Let $k \in \N$ with $0 \leq k \leq s$.
 \begin{itemize}[label=\textbullet]
\item Assume $k \geq 2$, and let $l\in \N$ with $l \leq k$, then there exists $C > 0$ such that for any $f \in H^{s\vee(s_0+\frac32),k}$ and $g \in H^{(s-1)\vee(s_0+\frac12),k\wedge(s-1)}$, we can write 
\begin{equation}
\label{apdx:commutator:gen}
\Vert [\Lambda^{s-l} \partial_r^l,f]g \Vert_{L^2} \leq C \Vert f \Vert_{H^{s\vee(s_0+\frac32),k}} \Vert g \Vert_{H^{(s-1)\vee(s_0+\frac12),k\wedge(s-1)}}.
\end{equation}

\item If $s \geq  1$, then there exists $C > 0$ such that for any $f \in H^{s,0} \cap H^{s_0+\frac32,1}$, $g \in H^{s-1,0}\cap H^{s_0+\frac12,1}$, we can write 
\begin{equation}
\label{apdx:commutator:horizontal}
\Vert [|D|^2 \Lambda^{s-2},f]g \Vert_{L^2} \leq C \Vert f \Vert_{H^{s_0+\frac32,1}} \Vert g \Vert_{H^{s-1,0}}  + C \Vert f \Vert_{H^{s,0}} \Vert g \Vert_{H^{s_0+\frac12,1}}.
\end{equation}

\item Let $k \geq 1$. There exists $C > 0$ such that if $f \in \nouveau{W^{k,\infty}([0,1])}$ and $g \in H^{s-1,k-1}$, then
\begin{equation}
\label{apdx:commutator:infty}
 \Vert [\Lambda^{s-k} \partial_r^k,f]g \Vert_{L^2} \leq C | f |_{W^{k,\infty}} \Vert g \Vert_{H^{s-1,k-1}}. 
 \end{equation}

\end{itemize}
\end{lemma}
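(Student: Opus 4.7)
The plan is to reduce each of the three commutators to a combination of (a) purely horizontal Kato--Ponce inequalities on $\R^d$, applied pointwise in $r$ and then integrated vertically using Lemma \ref{apdx:embedding}, and (b) products of derivatives controlled by the anisotropic product estimates of Lemma \ref{apdx:pdt}. The common mechanism is the Leibniz rule for $\partial_r^l$, which isolates the term where all vertical derivatives fall on $g$ (this is precisely what the commutator subtracts off) from the lower-order cross terms.

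For \eqref{apdx:commutator:gen}, I would first write
\[
[\Lambda^{s-l}\partial_r^l, f]g
= [\Lambda^{s-l}, f]\,\partial_r^l g
+ \sum_{j=1}^l \binom{l}{j}\,\Lambda^{s-l}\bigl((\partial_r^j f)(\partial_r^{l-j}g)\bigr).
\]
The first summand is a purely horizontal Kato--Ponce commutator, which I would bound at each fixed level $r$ by the classical estimate on $\R^d$, take $L^2$ in $r$, and use Lemma \ref{apdx:embedding} to trade the pointwise-in-$r$ $H^{\cdot}_x$ norms against the anisotropic $H^{\cdot+\frac12,1}$ norms. Each term in the sum is estimated in $H^{s-l}$ using a product estimate: depending on whether $s-l \geq s_0+\frac12$ or not, I apply \eqref{apdx:pdt:algb} or \eqref{apdx:pdt:tame} after distributing derivatives between $f$ and $g$. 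Since $j\geq 1$, the vertical order on $g$ is at most $l-1 \leq k-1$, which is exactly what produces the $\wedge(s-1)$ gain in the statement when $k=s$.

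For \eqref{apdx:commutator:horizontal}, the operator $|D|\Lambda^{s-1}$ is a Fourier multiplier in $x$ only, so the commutator involves no vertical derivative at all. Applying the classical Kato--Ponce inequality on $\R^d$ pointwise in $r$ gives
\[
\bigl|[|D|\Lambda^{s-1}, f(\cdot, r)]g(\cdot, r)\bigr|_{L^2_x}
\lesssim |\nabla_x f(\cdot, r)|_{H^{s_0+\frac12}}\,|g(\cdot, r)|_{H^{s-1}}
+ \bigl\langle|\nabla_x f(\cdot, r)|_{H^{s-1}}\,|g(\cdot, r)|_{H^{s_0+\frac12}}\bigr\rangle_{s>s_0+1},
\]
and integrating in $r$ while using Lemma \ref{apdx:embedding} on whichever factor carries the high regularity in $x$ yields the claim.

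For \eqref{apdx:commutator:infty}, the key observation is that since $f$ depends only on $r$, it commutes with the horizontal Fourier multiplier $\Lambda^{s-k}$, so
\[
[\Lambda^{s-k}\partial_r^k, f]g
= \Lambda^{s-k}[\partial_r^k, f]g
= \sum_{j=1}^k \binom{k}{j}\,(\partial_r^j f)\,\Lambda^{s-k}\partial_r^{k-j}g.
\]
Each summand is bounded in $L^2$ by $|\partial_r^j f|_{L^\infty}\,\Vert g\Vert_{H^{s-j,k-j}} \leq |f|_{W^{k,\infty}}\,\Vert g\Vert_{H^{s-1,k-1}}$ since $j\geq 1$. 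The main obstacle lies in part 1: carefully tracking the dichotomy between the subcritical case $s-l \leq s_0+\frac12$ and the supercritical one, and matching the vertical orders on $f$ and $g$ to the anisotropic norms that appear in the statement, so that in particular the top-derivative regime ($k=s$) indeed produces only a loss of $k-1$ vertical derivatives on $g$ rather than $k$.
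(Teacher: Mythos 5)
Your proposal is correct and follows essentially the same route as the paper: the paper simply cites \cite[Lemma A.8]{Duchene2022} for \eqref{apdx:commutator:gen}, and for \eqref{apdx:commutator:horizontal} and \eqref{apdx:commutator:infty} it uses exactly your mechanism (pointwise-in-$r$ Kato--Ponce plus the embedding of Lemma \ref{apdx:embedding}, and the observation that an $r$-dependent $f$ commutes with $\Lambda^{s-k}$ followed by the Leibniz rule). The only cosmetic slip is that the pointwise-in-$r$ Kato--Ponce bound should carry $|\nabla_x f(\cdot,r)|_{H^{s_0}}$, with the extra $\tfrac12$ appearing only after applying Lemma \ref{apdx:embedding} to the low-regularity factor.
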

\begin{proof}
For estimate \eqref{apdx:commutator:gen}, we first write
\begin{equation}
\label{eqn:comm:temp}
[\Lambda^{s-k}\partial_r^l,f]g = \Lambda^{s-l}([\partial_r^l,f]g) + [\Lambda^{s-k},f]\partial_r^l g.
\end{equation}
For the first term in \eqref{eqn:comm:temp}, we use the product estimate \cite[Lemma A.3]{Duchene2022} and standard commutator estimates with partial derivatives (see for instance \nouveau{\cite[Lemma A.8]{Duchene2022}}). For the second term in \eqref{eqn:comm:temp}, we use the commutator estimate \cite[Theorem 5 (i)]{Lannes2006} and then the product estimate \eqref{apdx:pdt:tame}.\\
For \eqref{apdx:commutator:horizontal}, we use \cite[Theorem 5 (i)]{Lannes2006} and the embedding from \cite[Lemma A.1]{Duchene2022}.
For inequality \eqref{apdx:commutator:infty}, we start by noticing that as $f$ only depends on $r$, it commutes with $\Lambda^{s-k}$. Then the standard the commutator estimate between derivatives and functions in $L^2$ yields the result.
\end{proof}
\begin{lemma}[Symmetric commutator estimates]
\label{apdx:commutator_sym}
Let $d\in \N^*$, $s_0 > d/2$, $s \in \R$  with $s \geq s_0+\frac52$ and $k \in \N$ with $2 \leq k \leq s$.

\begin{itemize}[label=\textbullet]
\item 
If $l\in \N$ with $0 \leq l \leq k$. Then there exists $C > 0$ such that for any $f \in H^{s,k}$, $g \in H^{s-1,k\wedge(s-1)}$, we can write 
\begin{equation}
\label{apdx:commutator_sym:gen}
\Vert [\Lambda^{s-l} \partial_r^l;f,g] \Vert_{L^2} \leq C \Vert f \Vert_{H^{s-1,k\wedge(s-1)}} \Vert g \Vert_{H^{s-1,k\wedge(s-1)}}.
\end{equation}
\item 
There exists $C > 0$ such that for $f,g \in H^{s-1,0}\cap H^{s_0+\frac32,1}$ we can write 
\begin{equation}
\label{apdx:commutator_sym:horizontal}
\Vert [|D|^2\Lambda^{s-2};f,g] \Vert_{L^2} \leq C \Vert f \Vert_{H^{s_0+\frac32,1}} \Vert g \Vert_{H^{s-1,0}} + C \Vert f \Vert_{H^{s-1,0}} \Vert g \Vert_{H^{s_0+\frac32,1}}.
\end{equation}
\end{itemize}
\end{lemma}

\begin{proof}
\newcommand{\diff}{\mathbb{\Lambda}}
Let $\mathbb{\Lambda}:=|D|^2 \Lambda^{s-2}$ or $\Lambda^s$. Then, as for the proof of Lemma \ref{apdx:commutator}, \eqref{apdx:commutator_sym:gen} and \eqref{apdx:commutator_sym:horizontal} both follow easily from
\begin{equation}
\label{apdx:comm:symm:temp}
\vert [\diff;f,g] \vert_{L^2(\R^d)} \leq C \vert f \vert_{W^{1,\infty}}\vert g\vert_{H^{s-1}} + C \vert f \vert_{H^{s-1}}\vert g\vert_{W^{1,\infty}},
\end{equation}
which is a small adaptation of the proof of \cite[Proposition B.10 (2)]{Lannes2013} (with $\delta = 1$); we highlight the necessary adaptations for the sake of completeness. Note \cite[Remark B.11]{Lannes2013} which extends \cite[Proposition B.10]{Lannes2013} to Fourier multipliers of smooth symbols of degree $s$, as is the case for $\diff$. \\
The proof relies on paradifferential calculus. Although the present study is not the place 
to develop the whole theory, let us write the fundamental decomposition
\begin{equation}
\label{apdx:comm:symm:temp2}
fg = T_f g + T_g f + R(f,g),
\end{equation}
with $T_f,T_g$ defined in \cite[Proof of Proposition B.10, Step 1]{Lannes2013}, as well as $R(f,g)$ which is a remainder term.  Using \eqref{apdx:comm:symm:temp2}, we can write a paralinearization formula for the symmetric commutator, analogous to \cite[(B.14)]{Lannes2013}:
\begin{equation}
\label{apdx:comm:symm:temp3}
[\diff;f,g] = [\diff,T_f]g + [\diff,T_g]f + (\diff T_f) g - T_{\diff f} g + (\diff T_g)f - T_{\diff g} f + \diff R(f,g) - R(\diff f,g) - R(f,\diff g).
\end{equation}
The crucial difference is that the second term in the right-hand side of \cite[(B.14)]{Lannes2013} is absent from \eqref{apdx:comm:symm:temp3}, precisely because the commutator here is a symmetric one. Then, the first two terms in \eqref{apdx:comm:symm:temp3} are bounded from above thanks to \cite[(B.15) and Step 8]{Lannes2013}; the other ones by \cite[(B.18), (B.19), (B.20)]{Lannes2013} respectively. This yields \eqref{apdx:comm:symm:temp}.\\
\end{proof}

\section{Definitions and results on Alinhac's good unknown}
\label{appendixB}
The system \eqref{eqn:euler_slag} of the Euler equations in isopycnal coordinates mainly differs from the one in Eulerian coordinates by the operators $\nabla^{\varphi}$, which arise from the chain rule when differentiating a composition of the unknowns with the change of variables. In other words, the operator $\nabla^{\varphi}$ is the gradient in Eulerian coordinates. Recall that the change of variables $\varphi$ is 
\begin{equation}
\label{eqn:varphi:B}
\varphi(t,x,r) = (t,x,-r + \epsilon \eta(t,x,r)).
\end{equation}
Recalling the notation $h := - \partial_r \eta$, we write its Jacobian matrix $\partial \varphi$, as defined in \eqref{eqn:def:jacphi}. Recall the definition \eqref{eqn:def:gradphi} of $\gradphi,\partial_t^{\varphi},\partial_r^{\varphi}$. The non-linearity of these operators raises an issue when one wants to differentiate the system, say apply an operator $\mathbb{\Lambda}$ of the form $\Lambda^{s-k}\partial_r^k$ with $s > 0$, $ 0 \leq k \leq s$, as $\mathbb{\Lambda}$ and $\nabla^{\varphi}$ do not commute, and the commutator involves higher derivatives in $\eta$ than what the energy method allows us to control. This is detailed in the following lemma.
\begin{lemma}
\newcommand{\diff}{\mathbb{\Lambda}}

Let \nouveau{$s_0 > d/2$}, $s \in \R$ with $s > s_0 + \frac32$, $0 \leq k \leq s$, with $k \in \N$. Assume that $\eta \in H^{s +1 ,2\vee k +1}$ and there exist $h_*,h^*$ such that $0<h_* \leq 1+\epsilon h \leq h^*$ with $h := - \partial_r \eta$. Then there exists $C>0$ such that the following holds. We write   $ \diff$ an operator of the form $\Lambda^{s-k} \partial_r^k$ or $|D|^2 \Lambda^{s-2}$ (with $k=0$ by convention). Let $f \in H^{s-1,k\wedge(s-1)\vee2}$. Then
\begin{equation}
 \label{apdx:alinhac:comm_brutal}
 \Vert [\diff, \gradphi] f \Vert_{L^2} \leq\epsilon C \Vert \eta \Vert_{H^{s\vee(s_0+\frac32) +1 ,2\vee k +1}} \Vert \nabla f \Vert_{H^{s-1,k\wedge(s-1)\vee2}}.
 \end{equation}
\end{lemma}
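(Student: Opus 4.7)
The plan is to exploit the fact that the operator $\mathbb{\Lambda}$ (either $\Lambda^{s-k}\partial_r^k$ or $|D|\Lambda^{s-1}$) commutes with both $\nabla_x$ and $\partial_r$, so that $[\mathbb{\Lambda}, \nabla^{\varphi}]$ reduces to commutators with the coefficients appearing in \eqref{eqn:def:gradphi}. Writing $\partial_r^{\varphi} = -\partial_r + \epsilon \tfrac{h}{1+\epsilon h}\partial_r$, the two components of the commutator become
$$[\mathbb{\Lambda}, \gradphi[x]] f = \epsilon \Bigl[\mathbb{\Lambda}, \frac{\nabla_x \eta}{1+\epsilon h}\Bigr] \partial_r f, \qquad [\mathbb{\Lambda}, \partial_r^{\varphi}] f = \epsilon \Bigl[\mathbb{\Lambda}, \frac{h}{1+\epsilon h}\Bigr] \partial_r f,$$
so the whole commutator $[\mathbb{\Lambda}, \nabla^{\varphi}] f$ carries an explicit factor of $\epsilon$ and depends linearly on $\partial_r f$ through a commutator with one of two explicit coefficient functions of $\eta$ and $h$.

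For each of the two resulting terms, I would apply the appropriate commutator estimate from Lemma \ref{apdx:commutator}: estimate \eqref{apdx:commutator:gen} when $\mathbb{\Lambda} = \Lambda^{s-k}\partial_r^k$, and estimate \eqref{apdx:commutator:horizontal} (noting that $|D|\Lambda^{s-1}$ acts only in the horizontal variable) when $\mathbb{\Lambda} = |D|\Lambda^{s-1}$. In both cases one obtains a bound of the form
$$C \Vert a \Vert_{H^{s\vee(s_0+\frac32),\, k \vee 2}} \, \Vert \partial_r f \Vert_{H^{(s-1)\vee(s_0+\frac12),\, 2\vee(k\wedge(s-1))}},$$
with $a \in \{\tfrac{\nabla_x \eta}{1+\epsilon h}, \tfrac{h}{1+\epsilon h}\}$. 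The assumption $s > s_0+\tfrac32$ ensures $(s-1)\vee(s_0+\tfrac12) = s-1$, so the $f$-factor is exactly $\Vert \nabla f \Vert_{H^{s-1,\, k\wedge(s-1)\vee 2}}$, matching the statement.

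It remains to bound the coefficients. Writing $\tfrac{1}{1+\epsilon h} = 1 - \tfrac{\epsilon h}{1+\epsilon h}$, the composition estimate \eqref{apdx:composition:h}, which is applicable thanks to \eqref{hyp:h}, gives $\Vert \tfrac{\epsilon h}{1+\epsilon h} \Vert_{H^{\sigma,\kappa}} \leq C \Vert h \Vert_{H^{\sigma, \kappa}}$; combined with the tame product estimate \eqref{apdx:pdt:algb} this yields
$$\Vert a \Vert_{H^{\sigma,\kappa}} \leq C \bigl(\Vert \nabla_x \eta \Vert_{H^{\sigma, \kappa}} + \Vert h \Vert_{H^{\sigma, \kappa}}\bigr) \leq C \Vert \eta \Vert_{H^{\sigma+1,\, \kappa + 1}},$$
where the last inequality accounts for the extra horizontal derivative in $\nabla_x \eta$ and the extra vertical derivative in $h = -\partial_r \eta$. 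Taking $(\sigma, \kappa) = (s\vee(s_0+\tfrac32),\, k\vee 2)$ produces exactly the $\eta$-norm appearing in \eqref{apdx:alinhac:comm_brutal}. The main burden of the argument is purely the bookkeeping of indices across the product, commutator and composition estimates, and verifying that the $\max$ with $s_0$ indeed drops out under $s > s_0 + \tfrac{3}{2}$; this ``brutal'' bound is stated precisely to exhibit the derivative loss in $\eta$ which Alinhac's good unknown machinery is subsequently designed to eliminate.
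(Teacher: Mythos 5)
Your proposal is correct and follows essentially the same route as the paper: reduce $[\mathbb{\Lambda},\nabla^{\varphi}]f$ to $\epsilon$ times a commutator of $\mathbb{\Lambda}$ with the coefficient $\tfrac{\nabla\eta}{1+\epsilon h}$ acting on $\partial_r f$, apply the commutator estimates of Lemma \ref{apdx:commutator}, and then bound the coefficient via the product and composition estimates. The only (immaterial) difference is that you invoke \eqref{apdx:commutator:horizontal} for the $|D|\Lambda^{s-1}$ case where the paper simply cites \eqref{apdx:commutator:gen} with the convention $k=0$.
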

\begin{proof}
\newcommand{\diff}{\mathbb{\Lambda}}
Let   $ \diff$ an operator of the form $\Lambda^{s-k} \partial_r^k$ or $|D|^2 \Lambda^{s-2}$. \\
We only treat $\gradphi[x]$, and $\partial_r^{\varphi}$ is treated the same way. Recall the expression \eqref{eqn:def:gradphi} of $\gradphi[x]$. As $\diff$ and $\nabla_x$ commute, we can write
$$ \Vert [\diff,\gradphi[x]]f \Vert_{L^2} = \epsilon \Vert [\diff, \frac{1}{1+\epsilon h} \nabla_x \eta ] f \Vert_{L^2}.$$
Recall indeed that $h = - \partial_r \eta$. Thus the commutator estimates \eqref{apdx:commutator:gen} and \nouveau{\eqref{apdx:commutator:horizontal}} yield
$$ \Vert [\diff,\gradphi[x]]f \Vert_{L^2} \leq C \epsilon \left\Vert \frac{1}{1+\epsilon h} \nabla_x \eta \right\Vert_{H^{s\vee(s_0+\frac32) ,2\vee k}} \left\Vert f \right\Vert_{H^{s-1,k\wedge(s-1)\vee2}}.$$
Then, the product estimate \cite[Lemma A.3]{Duchene2022} and the composition estimate \cite[Lemma A.5]{Duchene2022} yield the desired estimate.
\end{proof}
This issue is solved by using Alinhac's good unknowns, introduced in \cite{Alinhac1989}. See also \cite[Section 2.3]{MasmoudiRousset2012} for its use for the free-surface Navier-Stokes equation, and \cite{Lannes2013} for its use for the water waves equations, as well as \cite[Proposition 3.17]{Castro2014a} where the importance of the relation \eqref{apdx:alinhac:comm1} is emphasized. Recall the notation \eqref{eqn:varphi:B}. \nouveau{In what follows we make the assumptions, for $\eta \in H^{s_0+\frac32,2}(S_r)$,
\begin{hyp}
\label{hyp:h:bc:apdx:B}
\eta_{|r=0} = \eta_{|r=1} = 0,\qquad
h_* \leq 1+\epsilon h \leq h^*,
\end{hyp}
for some constants $0<h_*\leq h^*$, with $h := - \partial_r \eta$, as well as
\begin{hyp}
\label{hyp:M:eta:apdxB}
\Vert \nabla_{t,x,r} \eta \Vert_{H^{s-1,k\wedge(s-1)}} \leq M,
\end{hyp}
for some constant $M > 0$ and $s,k$ that will be defined upon the use of this assumption.}
\begin{lemma}[Alinhac's good unknown, \cite{Alinhac1989}]
\label{apdx:alinhac:def}
\newcommand{\diff}{\mathbb{\Lambda}}

Let $T > 0$, $s_0> d/2$, $s \in \R$ with $s \geq s_0 + \frac52$, $k,l \in \N $ with $0 \leq l \leq k \leq s$ and $k \geq 2$. Assume that $\eta \in C^0([0,T],H^{s_0+\nouveau{\frac52,3}})$ satisfies \eqref{hyp:h:bc:apdx:B} and \eqref{hyp:M:eta:apdxB} for some $M > 0$. Then there exists $C>0$ depending only on $M$ such that the following holds. We write $\diff$ an operator of the form $\Lambda^{s-l} \partial_r^l$ (with $l \geq 1$) or $|D|^2 \Lambda^{s-2}$ (with $l=0$ by convention).
\begin{itemize}[label=\textbullet]
\item Let $f \in C^0([0,T],\nouveau{\dot{H}^1(S_r)/\R})$ with \nouveau{$\nabla_{t,x,r} f \in H^{s-1,k\wedge(s-1)}$}. We have 
\begin{equation}
\label{apdx:alinhac:comm1}
\diff  \gradphi[t,x,r] f = \gradphi[t,x,r] f^{\diff}  + \epsilon ({\diff} \eta \partial_r^{\varphi})\gradphi[t,x,r]f + \epsilon \Ral{1}{\nabla_{t,x,r}f},
\end{equation}
where   $\epsilon  \Ral{1}{\nabla_{t,x,r}f} := \nouveau{-}(\partial \varphi)^{-T} [{\diff} ; (\partial \varphi)^T, \gradphi[t,x,r] f]$, and we can write
\begin{equation}
\label{apdx:alinhac:R1}
\Vert \Ral{1}{\nabla_{t,x,r}f} \Vert_{L^2} \leq C \Vert \nabla_{t,x,r}\eta \Vert_{H^{s-1,k\wedge(s-1)}} \Vert \gradphi[t,x,r] f\Vert_{H^{s-1,k\wedge(s-1)}} .
\end{equation}
Here, Alinhac's good unknown is defined as
\begin{equation}
\label{apdx:alinhac:definition}
f^{\diff} := \diff f + \epsilon \frac{\diff \eta}{1+\epsilon h} \partial_r f.
\end{equation}
\item For  \nouveau{$\vec{f} \in (\dot{H}^1(S_r)/\R)^{d+1}$ with $\nabla \vec{f} \in \left(H^{s-1,k\wedge (s-1)}\right)^{(d+1)^2}$} a vector-valued function of  $d+1$ components, we can write
\begin{equation}
\label{apdx:alinhac:comm2}
\diff \gradphi \cdot \vec{f} = \gradphi \cdot \vec{f}^{\diff} + \epsilon \diff \eta \partial_r^{\varphi}(\gradphi \cdot \vec{f}) + \epsilon \Ral{2}{\nabla  \vec{f}},
\end{equation}
where
$$\Ral{2}{\nabla  \vec{f}} := \sum\limits_{i \in \{1, \dots,d+1\}} \Ral{1}{\nabla_{x,r} \vec{f}_i} \cdot \vec{e_i}.$$
Moreover,
\begin{equation}
\label{apdx:alinhac:R2}
\Vert \Ral{2}{\nabla  \vec{f}} \Vert_{L^2} \leq C \epsilon \Vert \nabla_{x,r} \eta \Vert_{H^{s-1,k\wedge(s-1) }} \Vert \gradphi \vec{f} \Vert_{H^{s-1,k\wedge(s-1)}}.
\end{equation}
Here, Alinhac's good unknown is defined as in \eqref{apdx:alinhac:definition} component by component.
\end{itemize}
\end{lemma}
\begin{proof}
\newcommand{\diff}{\mathbb{\Lambda}}
We prove the first point of Lemma \ref{apdx:alinhac:def}, and the proof of the second one can be deduced from the first one and the definition 
$$\gradphi \cdot f  = \sum\limits_{i \in \{ 1,\dots,d+1\}} \partial_i^{\varphi} f _i.$$
We write $ \diff$ an operator of the form $\Lambda^{s-k} \partial_r^k$ or $|D|^2 \Lambda^{s-2}$ (with $k=0$ by convention). In this proof we write $\nabla := \nabla_{t,x,r}$ (resp. $\gradphi$), for the sake of conciseness. We can write 
$$\gradphi f  = (\partial \varphi)^{-T} \nabla f .$$
Multiplying both sides by $(\partial \varphi)^{T}$ and applying $\diff$ yields 
$$ (\partial \varphi)^T \diff  \gradphi f + (\diff(\partial \varphi)^T) \gradphi f + [\diff;(\partial \varphi)^T,\gradphi f] = \nabla \diff f.$$
Applying $(\partial \varphi)^{-T}$ to both sides yields
$$\diff  \gradphi f  =  - (\partial \varphi)^{-T} \diff  (\partial \varphi)^{T} \gradphi  f   + \gradphi \diff  f  + \epsilon \Ral{1}{\nabla  f },$$
which we write in such a way that reveals Alinhac's good unknown 
\begin{equation}
\label{eqn:alinhac:good_gen}
\diff  \gradphi f  =  \gradphi f ^{\diff} + \gradphi (\diff  \varphi \cdot \gradphi  f ) - (\partial \varphi)^{-T} \diff  (\partial \varphi)^{T} \gradphi  f + \epsilon  \Ral{1}{\nabla  f }.
\end{equation}
Here, we wrote 
$$ f ^{\diff} := \diff  f  - (\diff  \varphi \cdot \gradphi )f .$$
Now, we need to compute the sum of the second and third terms. To this end, recall the notation $\partial^{\varphi} \varphi := (\partial_i^{\varphi} \varphi_j ) _{i,j \in \{ 1,\dots,d+1 \} }$. We then write 
$$\begin{aligned}
(\partial \varphi)^T (\partial^{\varphi} \diff   \varphi)^T &= (\partial \varphi)^T (\partial^{\varphi}_i \diff  \varphi_j)_{(i,j)\in \{1,\dots,d+1\}^2} = (\partial \varphi)^T \nabla^{\varphi} \begin{pmatrix} \diff  \varphi_1 & \dots & \diff  \varphi_{d+1} \end{pmatrix} \\
&= (\partial \varphi)^T (\partial \varphi)^{-T}\nabla \begin{pmatrix} \diff  \varphi_1 & \dots & \diff  \varphi_{d+1} \end{pmatrix} = (\partial \diff  \varphi)^T = (\diff  \partial \varphi)^T.
\end{aligned}$$
Moreover,
$$ \begin{aligned} 
(\partial^{\varphi} \diff  \varphi)^T \gradphi f  &= (\partial^{\varphi}_i \diff  \varphi_j)_{(i,j)\in \{1,\dots,d+1\}^2} \gradphi  f  = \gradphi ((\diff  \varphi \cdot \gradphi )f ) - \diff  \varphi \cdot \gradphi (\gradphi f ).
\end{aligned} $$ 
Thus we get
$$\gradphi  (\diff  \varphi \cdot \gradphi  f ) - (\partial \varphi)^{-T} \diff  (\partial \varphi)^{T} \gradphi f  = \diff  \varphi \cdot \gradphi (\gradphi f ),$$
which yields 
\begin{equation}
\label{eqn:alinhac:comm2:gen}
\diff \gradphi f  = \gradphi \left( \diff f - \diff \varphi \cdot \gradphi f  \right) + \left(\diff \varphi \cdot \gradphi \right) \gradphi f  + \epsilon \Ral{1}{\nabla  f }.
\end{equation}
Note that \eqref{eqn:alinhac:comm2:gen} is a generalization of \eqref{apdx:alinhac:comm1} as we did not use the particular form \eqref{eqn:varphi:B} of $\varphi$.\\
 
Thus, let us finally discuss the particular form of Alinhac's good unknown stated in \eqref{apdx:alinhac:definition}. In what follows, we write 
$$ \varphi_0(t,x,r) := (t,x,-r)$$
so that 
\begin{equation}
\label{eqn:varphi}
 \varphi = \varphi_0 + \epsilon (0,0,\eta).
 \end{equation}
Note that plugging \eqref{eqn:varphi} into \eqref{eqn:alinhac:good_gen} does not yield \eqref{apdx:alinhac:definition}, due to the fact that $\diff \varphi_0 \neq 0$. However, simple computations show that $\nabla_x \diff \varphi_0 = 0$ and $\partial_r \diff \varphi_0 = 0$, so that we can write
\begin{equation}
\label{eqn:alinhac:magie}
\gradphi \left(\diff \varphi_0 \cdot \gradphi f  \right) = \left(\diff \varphi_0 \cdot \gradphi \right) \gradphi f .
\end{equation}
Subtracting \eqref{eqn:alinhac:magie} from \eqref{eqn:alinhac:comm2:gen} yields
$$\diff \gradphi f  = \gradphi\left( \diff f  + \diff (\varphi - \varphi_0) \cdot \gradphi f \right) + \diff \left(\varphi - \varphi_0 \right) \cdot \gradphi(\gradphi  f ) + \Ral{1}{\nabla  f },$$
which is exactly \eqref{apdx:alinhac:comm1}, as $\varphi - \varphi_0 = \epsilon (0,0,\eta)$.\\

Finally, for the estimate on $\Ral{1}{\nabla f}$, the key point is to notice that $(\partial \varphi)^T = I_{d+1,1} + \epsilon \jac^T$, where
\begin{equation}
\label{eqn:def:J}
I_{d+1,1} := \begin{pmatrix} I_{d+1} & 0 \\ 0 & -1 \end{pmatrix}, \qquad \jac := \begin{pmatrix}
0 & 0 & \partial_t \eta \\ 0 & 0&\nabla_x \eta \\ 0 & 0 & - h
\end{pmatrix}.
\end{equation}
As the symmetric commutator $[\diff ; \cdot , \cdot \cdot]$ is bi-linear and $[\diff ;I_{d+1,1},\gradphi f] = 0$, we get 
$$ [\diff ;(\partial  \varphi)^T,\gradphi f] = \epsilon [\diff ;\jac , \gradphi f].$$
The result comes from the estimates \eqref{apdx:commutator_sym:gen} and \eqref{apdx:commutator_sym:horizontal} on symmetric commutators. Note that we used the definition of $\gradphi$, the product estimate \cite[Lemma A.3]{Duchene2022} as well as \cite[Lemma A.6]{Duchene2022} to write
 $$\Vert \nabla f  \Vert_{H^{s-1,k \wedge(s-1)}}\leq C (1+\epsilon \Vert \nabla \eta \Vert_{H^{s-1,k\wedge(s-1)}}) \Vert \gradphi f  \Vert_{H^{s-1,k\wedge(s-1)}}.$$
\end{proof}

Note that using the commutator estimate \eqref{apdx:commutator_sym:horizontal} instead of \eqref{apdx:commutator_sym:gen} in the proof for \eqref{apdx:alinhac:R1}, we have a more precise estimate of $\Ral{1}{\nabla f}$ in the case $\mathbb{\Lambda} = |D|^2\Lambda^{s-2}$. We state the result in the following lemma.

\begin{lemma}
\newcommand{\diff}{\mathbb{\Lambda}}
Let $s \in \R$ with $s \geq 1$. Assume that $\eta \in H^{s_0+\frac52,3}$ satisfies \eqref{hyp:h:bc:apdx:B} and \eqref{hyp:M:eta:apdxB} (with $k=0$). Then there exists $C > 0$ such that the following holds, for \nouveau{$f \in C^0([0,T],\dot{H}^1(S_r)/\R)$ with $\nabla_{t,x,r} f \in H^{s-1,0}\cap H^{s_0+\frac32,1}$} and $ \diff = |D|^2 \Lambda^{s-2}$
\begin{equation}
\label{apdx:alinhac:R1:un}
\Vert \Ral{1}{\nabla_{t,x,r}f} \Vert_{L^2} \leq C \Vert \nabla_{t,x,r}\eta \Vert_{H^{s-1,0}} \Vert \gradphi[t,x,r] f\Vert_{H^{s_0+\frac32,1}} + C \Vert \eta\Vert_{H^{s_0+\frac52,2}}\Vert \gradphi[t,x,r] f\Vert_{H^{s-1,0}}.
\end{equation}
\end{lemma}
Note that Lemma \ref{apdx:alinhac:def} and \eqref{apdx:alinhac:R1:un} are still valid when replacing $\gradphi$ by
$$  \gradphi[\mu] := \begin{pmatrix} \sqrt{\mu} \gradphi[x] \\ \partial_r^{\varphi} \end{pmatrix}, $$ 
by applying $J_{\mu} := \begin{pmatrix} \sqrt{\mu} I_d \\ 1 \end{pmatrix}$ to  \eqref{apdx:alinhac:comm1}, \eqref{apdx:alinhac:R1}, \eqref{apdx:alinhac:comm2}, \eqref{apdx:alinhac:R2}, \eqref{apdx:alinhac:R1:un}.\\

\nouveau{In order to treat the second term in the incompressibility equation in \eqref{eqn:euler_slag}, we write a slight variation of Lemma \ref{apdx:alinhac:def}.
\begin{lemma}
\label{apdx:lemma:alinhac:def:Vbar}
\newcommand{\diff}{\mathbb{\Lambda}}
Let $s_0> d/2$, $s \in \R$ with $s \geq s_0 + \frac52$, $k,l \in \N $ with $0 \leq l \leq k \leq s$ and $k \geq 2$. Assume that $\eta \in H^{s_0+\frac52,3}$ satisfies \eqref{hyp:h:bc:apdx:B} and \eqref{hyp:M:eta:apdxB} for some $M > 0$. Then there exists $C>0$ depending only on $M$ such that the following holds. We write $\diff$ an operator of the form $\Lambda^{s-l} \partial_r^l$ (with $l \geq 1$) or $|D|^2 \Lambda^{s-2}$ (with $l=0$ by convention).
Let $\Vb \in W^{s,\infty}([0,1])$. Then \eqref{apdx:alinhac:comm2} holds with $f=\Vb$, and 
\begin{equation}
\label{apdx:alinhac:R1:Vbar}
\Vert \Ral{2}{\nabla \Vb} \Vert_{L^2} \leq C \Vert \nabla  \eta \Vert_{H^{s-1,k\wedge(s-1)}} \vert \Vb \vert_{W^{k,\infty}}.
\end{equation}
\end{lemma}
\begin{proof}
\newcommand{\diff}{\mathbb{\Lambda}}
The proof of \eqref{apdx:alinhac:comm2} still holds. For the proof of \eqref{apdx:alinhac:R1:Vbar}, note that if $l=0$, $\Ral{2}{\nabla \Vb} = 0$, as $\Vb$ only depends on $r$. Otherwise, we use the definition of $\Ral{2}{\nabla_{t,x,r}\Vb}$ and the fact that $\Vb$ only depends on $r$ to write
\begin{equation*}
\label{apdx:alinhac:comm1:Vbar:temp1}
\epsilon \Ral{2}{\nabla \Vb } := \sum\limits_{i \in \{1, \dots,d\}} \epsilon\Ral{1}{\partial_r \Vb_i} \cdot \vec{e_i},
\end{equation*}
with $\Vb_i$ being the $i^{th}$ component of $\Vb$. We then use the definition of $\Ral{1}{\partial_r \Vb_i}$ and of the symmetric commutator to write
\begin{equation}
\label{apdx:alinhac:comm1:Vbar:temp2}
\epsilon \Ral{2}{\nabla \Vb } := - \sum\limits_{i \in \{1, \dots,d\}} (\partial \varphi)^{-T}\left( [\partial_r^l; \Lambda^{s-l}\epsilon J, - \vec{e_{d+1}} \Vb_i'] + [\diff; (\partial \varphi)^T, \frac{\epsilon}{1+\epsilon h}\begin{pmatrix} \nabla_x \eta \\ h \end{pmatrix}\Vb'_i\right) \cdot \vec{e_i},
\end{equation}
where for the first commutator we also used \eqref{eqn:def:J} and $[\partial_r^l;\Lambda^{s-l} I_{d,1},- \vec{e_{d+1}} \Vb_i'] = 0$. Thus, both terms on the right-hand side of \eqref{apdx:alinhac:comm1:Vbar:temp2} are indeed of size $\epsilon$. For the first symmetric commutator, we can use standard symmetric commutator estimates, stemming from Leibniz' rule (see for instance \cite[Lemma A.9]{Duchene2022}). For the second symmetric commutator, as $\nabla \eta \in H^{s-1,k\wedge(s-1)}$, we can use directly \eqref{apdx:commutator_sym:gen}. This yields \eqref{apdx:alinhac:R1:Vbar}.
\end{proof}
}
We show, in the following lemma, how the control on $\gradphi f^{\mathbb{\Lambda}}$ yields a control on $\nabla f$ in Sobolev norms. For this lemma we need to work in $H^{s}(S_r) := H^{s,s}(S_r)$ as we use commutator estimates in order to gain one derivative (see the estimates of Lemma \ref{apdx:commutator} and the remark just above).
\begin{lemma}
\label{lemma:alinhac:equiv}
\newcommand{\diff}{\mathbb{\Lambda}}
Let $s_0>d/2$, $s \in \N$ with $s \geq s_0 + \frac32$. Let $\eta \in H^{s_0+\frac52,3}$ such that \eqref{hyp:h:bc:apdx:B} and \eqref{hyp:M:eta:apdxB} (with $k=s$), for a constant $M > 0$, hold. Then there exists $C>0$ such that the following holds. We denote by $\mathcal{D} := \{ \Lambda^{s-l} \partial_r^l , l \in \{ 1,\dots,s \} \} \cup \{ |D|^2\Lambda^{s-2} \}$ the set of the differential operators that will be used in this lemma.\\
		Then there exists a constant $C$ depending only on $M$  such that the following holds:
		\begin{itemize}[label=\textbullet]
		\item If \nouveau{$f \in \dot{H}^{1}/\R$ }then there exists $c$ depending only on $s_0,h_*$ such that
		\begin{equation}
		\label{apdx:alinhac:equiv_1}
		\Vert \gradphi f \Vert_{L^2} \leq c ( 1 + \epsilon \Vert \eta \Vert_{H^{s_0+\frac32,2}} ) \Vert \nabla f \Vert_{L^2} .
		\end{equation}
		
		Conversely,
		\begin{equation}
		\label{apdx:alinhac:equiv_2}
		\Vert \nabla f \Vert_{L^2} \leq c ( 1 + \epsilon \Vert \eta \Vert_{H^{s_0+\frac32,2}} ) \Vert \gradphi f \Vert_{L^2} .
		\end{equation}
		\item If $f \in H^{s}$,  
		\begin{equation}
			\label{apdx:alinhac:basic}
			\sys{ \Vert f \Vert_{H^{s}} &\leq C \left( \sum_{ \diff \in \mathcal{D}} \Vert f^{\diff} \Vert_{L^2} + \Vert f \Vert_{H^{s_0+\frac32,2}} \right), \\
			 \sum_{\diff \in \mathcal{D}} \Vert f^{\diff} \Vert_{L^2} + \Vert f \Vert_{H^{s_0+\frac32,2}} &\leq C  \Vert f \Vert_{H^{s}}. \\	}
		\end{equation}
		\item If \nouveau{$f \in \dot{H}^1/\R$ with $\nabla f \in H^{s}$,}
		\begin{equation}
		\label{apdx:alinhac:grad}
		 \sys{ \Vert \nabla f \Vert_{H^{s_0+\frac32,2}} + \sum_{\diff \in \mathcal{D} } \Vert \nabla f^{\diff} \Vert_{L^2}  &\leq C \left(\Vert \gradphi f \Vert_{H^{s}} + \Vert \nabla f \Vert_{H^{s_0+\frac32,2}} \right),\\
				\Vert \gradphi f \Vert_{H^{s}} + \Vert \nabla f \Vert_{H^{s_0+\frac32,2}} & \leq C \left( \Vert \nabla f \Vert_{H^{s_0+\frac32,2}} + \sum_{\diff \in \mathcal{D} } \Vert \nabla f^{\diff} \Vert_{L^2} \right). }
		\end{equation}
		\end{itemize}
		
\end{lemma}
\begin{proof}
\newcommand{\diff}{\mathbb{\Lambda}}
For inequalities \eqref{apdx:alinhac:equiv_1} and \eqref{apdx:alinhac:equiv_2}, we use the definition of $\gradphi$ and the product estimate \eqref{apdx:pdt:tame}. \\
We now write   $ \diff$ an operator of the form $\Lambda^{s-l} \partial_r^l$ for $1 \leq l \leq s$ or $|D|^2 \Lambda^{s-2}$ (with $l=0$ by convention). \\
For the inequalities in \eqref{apdx:alinhac:basic}, we use the definition of Alinhac's good unknown \eqref{apdx:alinhac:definition}
$$f^{\diff} = \diff f + \epsilon \frac{\diff \eta}{1+\epsilon h} \partial_r f.$$
We take the $L^2-$norm of this equation, use triangular inequality together with the product estimate \eqref{apdx:pdt:tame} and $1+\epsilon h \geq h_*$
$$\sum_{\diff \in \mathcal{D}} \Vert f^{\diff} \Vert_{L^2} \leq c \Vert f \Vert_{H^{s}} + c \epsilon \Vert \eta \Vert_{H^{s}} \Vert f \Vert_{H^{s_0+\frac32,2}},$$
where $c$ only depends on $s,h_*$. This yields the result, up to a last detail : in the case $l=0$, we use $|D|^2 \Lambda^{s-2}$, so we lack the control of the $L^2$-norm. However the term $\Vert f \Vert_{H^{s_0+\frac32,2}}$ provides this control, and by interpolation we get the result. The converse inequality is proven the same way. \\ 
For the last two equations, we use equation \eqref{apdx:alinhac:comm1} of Lemma \ref{apdx:alinhac:def} to write
$$\diff  \gradphi  f = \gradphi  f^{\diff} +  \epsilon (\diff \eta \partial_r^{\varphi} )\gradphi f + \epsilon \Ral{1}{\nabla f}. $$
We take the $L^2$-norm of this expression. We use the product estimate \eqref{apdx:pdt:tame} for the second term and the estimate \eqref{apdx:alinhac:R1} for the last term:
$$\Vert \diff \gradphi  f \Vert_{L^2} \leq C \Vert \gradphi  f^{\diff} \Vert_{L^2} + \epsilon \Vert \eta \Vert_{H^{s}} \Vert \gradphi f \Vert_{H^{s_0+\frac32,2}} + C \epsilon \Vert \eta \Vert_{H^{s,2\vee (s-1) +1}} \Vert \gradphi f \Vert_{H^{s-1,2\vee (s-1) }}.$$
Note that here we use $k=s$ so that $k\wedge(s-1) = k-1$ (with the notation of Lemma \ref{apdx:commutator}). Adding $\Vert \nabla f\Vert_{H^{s_0+\frac32,2}}$ to the previous inequality  and summing the results for $\diff$ in $\mathcal{D}$ yields the inequality 
\begin{equation}
\label{eqn:apdxB:temp}
\Vert \gradphi  f \Vert_{H^{s}} + \Vert \nabla f\Vert_{H^{s_0+\frac32,2}} \leq C \left( \sum_{\diff \in \mathcal{D}} \Vert \gradphi  f^{\diff} \Vert_{L^2} + \Vert \nabla f\Vert_{H^{s_0+\frac32,2}} \right) + C \Vert \gradphi f \Vert_{H^{s-1,2\vee (s-1) }}.
\end{equation}
\nouveau{The last term can be interpolated between the high regularity norm $\Vert \gradphi  f \Vert_{H^{s}}$ and the low regularity norm $\Vert \gradphi  f \Vert_{L^2}$. Then, using Young's inequality, the high regularity norm can be absorbed by the first term on the right-hand side of \eqref{eqn:apdxB:temp} and the low regularity norm is bounded from above by $\Vert \nabla f\Vert_{H^{s_0+\frac32,2}}$.} \\
The converse inequality is proven in the same way. 
\end{proof}

We can (and need to) treat the case of no vertical derivatives (i.e. $H^{s,0}$ instead of $H^s := H^{s,s}$), which we do in the following lemma.
\begin{lemma}
\newcommand{\diff}{\mathbb{\Lambda}}
Let $s_0>d/2$, $s \in \R$ with $s > s_0 + \frac32$. Let $\eta \in H^{s_0+\frac52,3}$ such that \eqref{hyp:h:bc:apdx:B} and \eqref{hyp:M:eta:apdxB} (with $k=s$), for a constant $M > 0$, hold. Then there exists a constant $C$ depending only on $M$  such that for \nouveau{$f \in \dot{H}^1/\R$ with $\nabla f \in H^{s,0} \cap H^{s_0+\frac32,2}$},
		\begin{equation}
		\label{apdx:alinhac:grad:horizontal}
		 \sys{ \Vert \nabla f \Vert_{H^{s_0+\frac32,2}} + \Vert \nabla f^{\diff} \Vert_{L^2}  &\leq C \left(\Vert \gradphi f \Vert_{H^{s,0}} + \Vert \nabla f \Vert_{H^{s_0+\frac32,2}} \right),\\
				\Vert \gradphi f \Vert_{H^{s,0}} + \Vert \nabla f \Vert_{H^{s_0+\frac32,2}} & \leq C \left( \Vert \nabla f \Vert_{H^{s_0+\frac32,2}} + \Vert \nabla f^{\diff} \Vert_{L^2} \right). }
		\end{equation}
\end{lemma}
The proof of \eqref{apdx:alinhac:grad:horizontal} is a slight adaptation of \eqref{apdx:alinhac:grad} using \eqref{apdx:commutator_sym:horizontal} and we omit it.
\section{Some useful estimates} 
\label{appendixC}
We first show how to use the incompressibility constraint in order to estimate $\Vert w \Vert_{H^s}$ (for $s$ large enough) by terms depending only on $V$ and $\eta$, but with the loss of one derivative.
\begin{lemma}
\label{apdx:lemma:w_trick}
Let $s \geq s_0+\frac32$ and $(V,w,\eta) \in \left( H^s \right)^{d+2}$ satisfying \eqref{hyp:h}, the incompressibility constraint in \eqref{eqn:euler_slag}, as well as the boundary conditions \eqref{eqn:euler:bc}. Then, there exists \nouveau{$C > 0$ that only depends on an upper bound on $\Vert \eta \Vert_{H^s}$} such that
\begin{equation}
\label{apdx:eqn:w_trick}
\Vert w \Vert_{H^{s-1}} \leq C\Vert V \Vert_{H^s}(1 + \Vert \eta \Vert_{H^s}) + C \Vert \eta \Vert_{H^s} | \Vb |_{W^{s,\infty}}.
\end{equation}
\end{lemma}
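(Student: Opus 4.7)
The plan is to use the incompressibility constraint (the fourth equation in \eqref{eqn:euler_slag}) to express $\partial_r w$ in terms of $V$, $\eta$, and $\Vb'$, and then recover control on $w$ itself by integrating from $r=0$ using the boundary condition $w|_{r=0}=0$. Since $\Vb$ depends only on $r$, we have $\gradphi[x] \cdot \Vb = \epsilon \frac{\nabla_x \eta}{1+\epsilon h} \cdot \Vb'$. Substituting $\partial_r^{\varphi} = -\frac{1}{1+\epsilon h}\partial_r$ and multiplying through by $-(1+\epsilon h)$ gives the pointwise identity
$$\partial_r w = (1+\epsilon h)\nabla_x \cdot V + \epsilon\, \nabla_x \eta \cdot \partial_r V + \nabla_x \eta \cdot \Vb'.$$
Every summand on the right involves at most one $x$-derivative of $V$ or $\eta$, so morally $w$ gains one derivative of regularity over the right-hand side, consistent with the $H^{s-1}$ vs.\ $H^s$ asymmetry in \eqref{apdx:eqn:w_trick}.

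Next, I would split the norm $\Vert w\Vert_{H^{s-1}} = \sum_{l=0}^{s-1}\Vert \Lambda^{s-1-l}\partial_r^{l} w\Vert_{L^2}$ into the purely horizontal part ($l=0$) and the part containing at least one vertical derivative ($l\ge 1$). For the latter, I would write $\partial_r^{l} w = \partial_r^{l-1}\partial_r w$ and apply $\Lambda^{s-1-l}\partial_r^{l-1}$ to the formula for $\partial_r w$ above, bounding each of the three resulting products by means of the tame product estimate \eqref{apdx:pdt:algb}, the $L^\infty$ product estimate \eqref{apdx:pdt:infty} (for the $\Vb'$ factor), and the composition estimate \eqref{apdx:composition:h} for the factor $1+\epsilon h$. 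Summing over $l\in\{1,\dots,s-1\}$, this directly contributes the bound $C\Vert V\Vert_{H^s}(1+\Vert \eta\Vert_{H^s}) + C\Vert \eta\Vert_{H^s}|\Vb|_{W^{s,\infty}}$ appearing on the right-hand side of \eqref{apdx:eqn:w_trick}.

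For the $l=0$ term, I would use the boundary condition $w(x,0)=0$ to write
$$\Lambda^{s-1}w(x,r) = \int_0^r \Lambda^{s-1}\partial_r w(x,r')\,dr',$$
then apply Minkowski's integral inequality in $x$ followed by Cauchy--Schwarz in $r'$ to obtain
$$\Vert \Lambda^{s-1}w\Vert_{L^2(S_r)} \leq C\,\Vert \Lambda^{s-1}\partial_r w\Vert_{L^2(S_r)}.$$
Applying $\Lambda^{s-1}$ to the formula for $\partial_r w$ and bounding the result via the same product and composition estimates as in the previous step yields the same bound. Combining the two contributions gives the full estimate.

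The main technical point is keeping track of derivative counts: in each term of $\partial_r w$, either $V$ carries one horizontal derivative or $\eta$ carries one horizontal (or vertical, through $h$) derivative, and the product estimates in the anisotropic spaces $H^{s-2,s-2}$ (resp.\ $H^{s-1,0}$ in the $l=0$ case) are available since $s\geq s_0+\tfrac32$ places us in the algebra/tame regime. No estimate on the pressure or on the time evolution is needed, so the argument is purely algebraic and functional-analytic once the identity for $\partial_r w$ is in hand.
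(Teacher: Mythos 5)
Your proposal is correct and follows essentially the same route as the paper: both use the incompressibility constraint to express $\partial_r w$ in terms of $V$, $\eta$ and $\Vb'$, integrate from $r=0$ using the boundary condition $w|_{r=0}=0$, and close with the product and composition estimates of Appendix \ref{appendixA}. Your explicit split between the $l=0$ contribution (handled via the integral representation and Cauchy--Schwarz) and the $l\geq 1$ contributions (handled directly from the identity for $\partial_r w$) merely makes precise a step the paper leaves implicit.
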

\begin{proof}
Recall the incompressibility constraint
$$ \gradphi[x] \cdot V + \partial_r^{\varphi} w + \epsilon \frac{\nabla_x \eta}{1+\epsilon h} \partial_r \Vb=0,$$
with the definition \eqref{eqn:def:gradphi}. We thus write
$$ w(t,x,r) = \int_0^r \partial_r w(t,x,r')dr' = \int_0^r (1+\epsilon h) \left( \gradphi[x] \cdot V + \frac{\nabla_x \eta}{1+\epsilon h} \partial_r \Vb \right).$$
We thus have \eqref{apdx:eqn:w_trick}, by product estimates \cite[Lemma A.3]{Duchene2022} and \eqref{apdx:pdt:infty}.

\end{proof}
We now provide estimates on the non-linear term and buoyancy term $b$ in \eqref{eqn:euler_slag} and the elliptic equation on the pressure \eqref{eqn:elliptic}. Note that we also have, as in the Eulerian case,
	$$ \gradphi \cdot \NLun = \NLdeux,$$
where, on the right-hand side, a sum is taken over the indices $i,j$. \\
Note that the conditions on $s$ and $k$ in the following lemma are such that $H^{s,k}$ is an algebra.

	\begin{lemma}[Control of the non-linear term and the buoyancy term]
	\label{lemma:NL}
	Let $\mu,\epsilon \leq 1$, $M,\Mb > 0$, $s_0 > d/2$, $s \in \R$ with $s \geq s_0+\frac12$ and $1 \leq k \leq s$ with $k \in \N$. We make the assumption \eqref{hyp:h}, as well as $\epsilon \leq \sqrt{\mu}$ and
	$$\sys{
 | \varrho|_{W_r^{s,\infty}} + | \Vb |_{W_r^{k+1, \infty}} &\leq \Mb, \\
 \Vert \eta \Vert_{H^{s +1 ,k +1 }} + \Vert V \Vert_{H^{s +1 ,k +1}} + \sqrt{\mu}\Vert w \Vert_{H^{s +1 , k +1}}  &\leq M.}$$
	Then there exists a constant $C > 0$ depending only on $\Mb,M$ such that
	\begin{equation}
	\label{apdx:NL:1}
\sys{ \Vert  \left( \Ub + \epsilon \vec{U}\right) \cdot \nabla^{\varphi} \left( \Ub + \epsilon \vec{U}\right) \Vert_{H^{s,k}} &\leq \frac{\epsilon}{\sqrt{\mu}} C\left( \Vert \eta \Vert_{H^{s +1 ,k +1 }} + \Vert V \Vert_{H^{s +1 ,k +1}} + \sqrt{\mu}\Vert w \Vert_{H^{s +1 , k +1}} \right) ,\\
	 \Vert \NLdeux \Vert_{H^{s,k}} &\leq \frac{\epsilon}{\sqrt{\mu}} C\left( \Vert \eta \Vert_{H^{s +1 ,k +1 }} + \Vert V \Vert_{H^{s +1 ,k +1}} + \sqrt{\mu}\Vert w \Vert_{H^{s +1 , k +1}} \right). }
	\end{equation}
	We also have a similar estimate, that is uniform in $\mu$ but with a loss of derivatives:
	\begin{equation}
	\label{apdx:NL:2}
	\Vert \NLdeux \Vert_{H^{s,k}} \leq \epsilon C\left( \Vert \eta \Vert_{H^{s +2 ,k +2 }} + \Vert V \Vert_{H^{s +2 ,k +2}}\right).
	\end{equation}
	 We can also write
	\begin{equation}
	\label{apdx:NL:low_reg}	
	\Vert  \left( \Ub + \epsilon \vec{U}\right) \cdot \nabla^{\varphi} \left( \Ub + \epsilon \vec{U}\right) \Vert_{L^2} \leq \frac{\epsilon}{\sqrt{\mu}} C\left( \Vert \eta \Vert_{H^{s_0+\frac32 ,2 }} + \Vert V \Vert_{H^{s_0+\frac12 ,1}} + \sqrt{\mu}\Vert w \Vert_{H^{s_0+\frac12 ,1}} \right) .
	\end{equation}
We also write a bound on the buoyancy term
	\begin{equation}
	\label{apdx:NL:buoyancy}
	\Vert b  \Vert_{H^{s,k}} \leq C \Vert \eta \Vert_{H^{s\vee(s_0+\frac32),k\vee2}}.
	\end{equation}
\end{lemma}
	\begin{proof}
	We start with the horizontal component of the first equation in \eqref{apdx:NL:1}. We write 
		$$\begin{aligned}
		 &\left( \Ub + \epsilon \vec{U} \right) \cdot \nabla^{\varphi} \left( \Vb + \epsilon V \right) = \Vb \cdot \gradphi[x] \Vb + \epsilon \Vb \cdot \gradphi[x]V + \epsilon  \vec{U} \cdot \gradphi\left( \Vb + \epsilon V \right) \\
		&= \epsilon\frac{1}{1+\epsilon h} \left(\Vb \cdot \nabla_x \eta \right)\Vb' + \epsilon  \Vb \cdot \gradphi[x] V + \epsilon  V \cdot \gradphi[x] \left( \Vb + \epsilon V \right) + \epsilon  w  \partial_r^{\varphi} \left( \Vb + \epsilon V \right). \\
	\end{aligned} $$
Note the particular expression of the term $\gradphi[x] \Vb$ since $\Vb$ only depends on $r$. Using $\frac{1}{1+\epsilon h} = 1 - \epsilon  \frac{h}{1+\epsilon h}$, this yields
\begin{equation}
\label{eqn:NL_1}
\begin{aligned} 
\left( \Ub + \epsilon \vec{U} \right) \cdot \nabla^{\varphi} \left( \Vb + \epsilon V \right) &= \epsilon \left(\Vb \cdot \nabla_x \eta \right)\Vb' - \epsilon^2 \frac{h}{1+\epsilon h} \left(\Vb \cdot \nabla_x \eta \right)\Vb'+ \epsilon  \Vb \cdot \gradphi[x] V \\
&+ \epsilon  V \cdot \gradphi[x] \left( \Vb + \epsilon V \right) + \epsilon  w  \partial_r^{\varphi} \left( \Vb + \epsilon V \right). \end{aligned}
\end{equation}
We take the $H^{s,k}$-norm of the previous expression \eqref{eqn:NL_1}, with $s \geq s_0+\frac12$ and $k \geq 1$. We only treat the second and last terms, as the other ones can be treated the same way. Using the product estimates \cite[Lemma A.3]{Duchene2022} and \eqref{apdx:pdt:infty}, we bound the second term by
$$ \left\Vert \epsilon^2\frac{h}{1+\epsilon h} \left(\Vb \cdot \nabla_x \eta \right)\Vb' \right\Vert_{H^{s,k}} \leq \epsilon^2 c \left\Vert \frac{h}{1+\epsilon h} \right\Vert_{H^{s,k}} \Vert \nabla_x \eta \Vert_{H^{s,k}}, $$
where $c$ depends on $\Mb$. Using the composition estimate \cite[Lemma A.5]{Duchene2022} and the definition $h := - \partial_r \eta$, we can write
$$ \left\Vert \frac{h}{1+\epsilon h} \right\Vert_{H^{s,k}} \leq c \Vert \eta \Vert_{H^{s+1,k+1}}.$$
For the last term, we write, by definition of $\partial_r^{\varphi}$:
$$ \epsilon^2 w \partial_r^{\varphi} V = \nouveau{-}\epsilon^2 w \partial_r V \nouveau{+} \epsilon^3 w \frac{h}{1+\epsilon h} \partial_r V .$$
So that, by the product estimates \cite[Lemma A.3]{Duchene2022} and \eqref{apdx:pdt:infty}
$$ \begin{aligned}
\epsilon^2 \Vert w \partial_r^{\varphi} V\Vert_{H^{s,k}} &\leq c \epsilon^2 \Vert w \Vert_{H^{s,k}} \Vert \partial_r V \Vert_{H^{s,k}} \nouveau{+} \epsilon^3 \Vert w \Vert_{H^{s,k}} \left\Vert \frac{h}{1+\epsilon h} \right\Vert_{H^{s,k}} \Vert \partial_r V \Vert_{H^{s,k}} \leq \frac{\epsilon^2}{\sqrt{\mu}} C \left( \Vert V \Vert_{H^{s+1,k+1}} + \sqrt{\mu} \Vert w \Vert_{H^{s+1,k+1}} \right). \end{aligned}$$
Here, $C$ depends on $M$. We used the composition estimate \cite[Lemma A.5]{Duchene2022}. \\ Note that the factor $\frac{1}{\sqrt{\mu}}$ comes from the fact that the term $\Vert w \Vert_{H^{s+1,k+1}}$ in $M$ comes with a factor $\sqrt{\mu}$. In order to bound the last term in \eqref{eqn:NL_1}, it remains to estimate the following term:
$$ \epsilon w \partial_r^{\varphi} \Vb = \epsilon w \partial_r \Vb - \epsilon^2 w \frac{h}{1+\epsilon h} \partial_r \Vb. $$
We use product estimates \eqref{apdx:pdt:infty} and \cite[Lemma A.3]{Duchene2022} as well as the composition estimate \cite[Lemma A.5]{Duchene2022}, to get
$$ \Vert \epsilon w \partial_r^{\varphi} \Vb\Vert_{H^{s,k}} \leq c \epsilon \Vert w \Vert_{H^{s,k}}  + c \epsilon^2 \Vert w \Vert_{H^{s,k}} \Vert h \Vert_{H^{s,k}} .$$
The vertical component of the first estimate in \eqref{apdx:NL:1} reads
$$ \epsilon \left( \Ub + \epsilon \vec{U} \right) \cdot \nabla^{\varphi} w = \epsilon \Vb \cdot \gradphi[x] w + \epsilon^2 V\cdot \gradphi[x] w + \epsilon^2 w \partial_r^{\varphi} w $$
The first two terms are treated as before. For the last term, the product estimate \cite[Lemma A.3]{Duchene2022} and the composition estimate \cite[Lemma A.5]{Duchene2022} yield
$$ \Vert \epsilon^2 w \partial_r^{\varphi} w \Vert_{H^{s,k}} \leq \frac{\epsilon^2}{\mu}  C ( \sqrt{\mu} \Vert w \Vert_{H^{s+1,k+1}})^2, $$
which is bounded by $\frac{\epsilon}{\sqrt{\mu}} CM$ thanks to the assumption $\epsilon \leq \sqrt{\mu}$. \\
For the second estimate in \eqref{apdx:NL:1} and \eqref{apdx:NL:2}, we rather prove
\begin{equation}
\label{eqn:NL:temp:1}
 \Vert \NLdeux \Vert_{H^{s,k}} \leq \epsilon C\left( \Vert \eta \Vert_{H^{s +1 ,k +1 }} + \Vert V \Vert_{H^{s +1 ,k +1}} + \Vert w \Vert_{H^{s +1 , k +1}} \right),
\end{equation}
where $C$ only depends on $M,\Mb$. Then, making $\sqrt{\mu} \Vert w \Vert_{H^{s +1 , k +1}}$ appear, we get the second estimate in \eqref{apdx:NL:1}. On the other hand, from \eqref{eqn:NL:temp:1} and using \eqref{apdx:eqn:w_trick}, we get \eqref{apdx:NL:2}.\\

We write
	\begin{equation}
	\label{eqn:NL_2} \begin{aligned} \NLdeux = &\frac{\epsilon^2}{(1+\epsilon h)^2} \sum\limits_{i,j \leq d} \partial_i \eta \Vb_j' \partial_j \eta \Vb_i' + 2 \epsilon^2\frac{1}{1+\epsilon h} \sum\limits_{i,j \leq d} \partial_i \eta \Vb_j' \partial_j^{\varphi} \vec{U}_i \\
	&- 2 \epsilon  \frac{1}{1+\epsilon h} \sum\limits_{j \leq d+1}  \Vb_j' \partial_j^{\varphi} w + \epsilon^2 \sum\limits_{i,j \leq  d+1} \partial_i^{\varphi} \vec{U}_j \partial_j^{\varphi} \vec{U}_i.
	\end{aligned}
	\end{equation}
	Each term in \eqref{eqn:NL_2} is treated as before with product estimates \cite[Lemma A.3]{Duchene2022} and \eqref{apdx:pdt:infty} as well as the composition estimate \cite[Lemma A.5]{Duchene2022}. For the last term in \eqref{eqn:NL_2}, we need the assumption $\epsilon \leq \sqrt{\mu}$, as for the last term in \eqref{eqn:NL_1}.\\
	
	For the last estimate \eqref{apdx:NL:low_reg}, the proof is very similar and we only treat one term for the sake of conciseness. We write, thanks to Hölder inequality and embedding of \cite[Lemma A.1]{Duchene2022} 
	$$ \begin{aligned} \left\Vert V \cdot \left( \frac{\nabla_x \eta}{1+\epsilon h} \partial_r V \right)\right\Vert_{L^2} & \leq \Vert V \Vert_{L^{\infty}} \Vert \nabla_x \eta \Vert_{L^{\infty}} \left\Vert \frac{1}{1+\epsilon h} \right\Vert_{L^{\infty}} \Vert \partial_r V \Vert_{L^2} \\
	&\leq C \Vert V \Vert_{H^{s_0+\frac12,1}} \Vert \eta \Vert_{H^{s_0+\frac32,2}} \Vert V \Vert_{H^{1,1}} \leq C \Vert V \Vert_{H^{s_0+\frac12,1}}.
	\end{aligned} $$
For the last inequality, we used Assumption \eqref{hyp:h}, as well as $s_0+\frac12 \geq \frac{d}{2}+\frac12 \geq 1$.\\
\nouveau{For \eqref{apdx:NL:buoyancy}, we use the definition \eqref{eqn:def:b} and the composition estimate \cite[Lemma A.5]{Duchene2022}.}
	\end{proof}
\section*{Acknowledgments}
The author thanks his PhD supervisors Vincent Duchêne and David Lannes for their valuable advice, \nouveau{as well as the anonymous referees for their detailed comments, which significantly improved the quality of the present manuscript.} \\
This work was supported by the BOURGEONS project, grant ANR-23-CE40-0014-01 of the French National Research Agency (ANR).\\
This work is licensed under \href{https://creativecommons.org/licenses/by/4.0/}{ CC BY 4.0}.
\bibliographystyle{amsalpha}
\bibliography{../../../Seafile/refs/refs}

\newcommand{\etalchar}[1]{$^{#1}$}
\providecommand{\bysame}{\leavevmode\hbox to3em{\hrulefill}\thinspace}
\providecommand{\MR}{\relax\ifhmode\unskip\space\fi MR }
\providecommand{\MRhref}[2]{%
  \href{http://www.ams.org/mathscinet-getitem?mr=#1}{#2}
}
\providecommand{\href}[2]{#2}
\begin{thebibliography}{ABDG98}

\bibitem[ABD24]{Adim2024}
Mahieddine Adim, Roberta Bianchini, and Vincent Duch\^{e}ne, \emph{Relaxing the
  sharp density stratification and columnar motion assumptions in layered
  shallow water systems}, C. R. Math. Acad. Sci. Paris \textbf{362} (2024),
  1597--1626. \MR{4834569}

\bibitem[ABDG98]{Amrouche1998}
Chérif Amrouche, Christine Bernardi, Monique Dauge, and Vivette Girault,
  \emph{Vector potentials in three-dimensional non-smooth domains},
  Mathematical Methods in the Applied Sciences \textbf{21} (1998), no.~9,
  823--864.

\bibitem[AF03]{AdamsFournier2003}
Robert~A. Adams and John J.~F. Fournier, \emph{Sobolev spaces}, second ed.,
  Pure and Applied Mathematics (Amsterdam), vol. 140, Elsevier/Academic Press,
  Amsterdam, 2003. \MR{2424078}

\bibitem[Ali89]{Alinhac1989}
Serge Alinhac, \emph{Existence d'ondes de rarefaction pour des systems
  quasi‐lineaires hyperboliques multidimensionnels}, Communications in
  Partial Differential Equations \textbf{14} (1989), no.~2, 173--230.

\bibitem[BCZE25]{Bianchini2024}
Roberta Bianchini, Michele Coti~Zelati, and Lucas Ertzbischoff,
  \emph{Ill-{P}osedness of the {H}ydrostatic {E}uler--{B}oussinesq {E}quations
  and {F}ailure of {H}ydrostatic {L}imit}, Comm. Math. Phys. \textbf{406}
  (2025), no.~10, Paper No. 254. \MR{4952107}

\bibitem[BD24]{Duchene2022}
Roberta Bianchini and Vincent Duch\^{e}ne, \emph{On the hydrostatic limit of
  stably stratified fluids with isopycnal diffusivity}, Comm. Partial
  Differential Equations \textbf{49} (2024), no.~5-6, 543--608. \MR{4772773}

\bibitem[BGS06]{BenzoniGavage2006}
Sylvie Benzoni-Gavage and Denis Serre, \emph{Multi-dimensional hyperbolic
  partial differential equations}, Oxford University Press, USA, 2006.

\bibitem[BM10]{BreschMetivier10}
Didier Bresch and Guy M\'{e}tivier, \emph{Anelastic limits for {E}uler-type
  systems}, Appl. Math. Res. Express. AMRX (2010), no.~2, 119--141.
  \MR{2719374}

\bibitem[CL15]{Castro2014a}
Angel Castro and David Lannes, \emph{Well-posedness and shallow-water stability
  for a new {H}amiltonian formulation of the water waves equations with
  vorticity}, Indiana Univ. Math. J. \textbf{64} (2015), no.~4, 1169--1270.
  \MR{3385789}

\bibitem[Dan10]{Danchin2010}
Raphaël Danchin, \emph{On the well-posedness of the incompressible
  density-dependent euler equations in the {$L^p$} framework}, Journal of
  Differential Equations \textbf{248} (2010), no.~8, 2130--2170.

\bibitem[DLS20]{Desjardins2019}
Benoît Desjardins, David Lannes, and Jean-Claude Saut, \emph{Normal mode
  decomposition and dispersive and nonlinear mixing in stratified fluids},
  Water Waves \textbf{3} (2020), no.~1, 153--192.

\bibitem[Duc22]{Duchene2022a}
Vincent Duchêne, \emph{Many models for water waves}, Open Math Notes
  OMN:202109.111309 (2022).

\bibitem[GBB{\etalchar{+}}00]{Griffies2000}
Stephen~M. Griffies, Claus Böning, Frank~O. Bryan, Eric~P. Chassignet,
  Rüdiger Gerdes, Hiroyasu Hasumi, Anthony Hirst, Anne-Marie Treguier, and
  David Webb, \emph{Developments in ocean climate modelling}, Ocean Modelling
  \textbf{2} (2000), no.~3–4, 123--192.

\bibitem[GM90]{Gent1990}
Peter~R. Gent and James~C. Mcwilliams, \emph{Isopycnal mixing in ocean
  circulation models}, Journal of Physical Oceanography \textbf{20} (1990),
  no.~1, 150--155.

\bibitem[How61]{Howard1961}
Louis~N. Howard, \emph{A note on the existence of certain viscous flows},
  Journal of Mathematics and Physics \textbf{40} (1961), no.~1–4, 172--176.

\bibitem[IT99]{Shigeharu1999}
Shigeharu Itoh and Atusi Tani, \emph{Solvability of nonstationary problems for
  nonhomogeneous incompressible fluids and the convergence with vanishing
  viscosity}, Tokyo Journal of Mathematics \textbf{22} (1999), no.~1.

\bibitem[KL23]{Klein2023}
Rupert Klein and Xin Liu, \emph{On the boundary layer arising from fast
  internal waves dynamics}, arxiv:2312.17395 (2023).

\bibitem[Lan06]{Lannes2006}
David Lannes, \emph{Sharp estimates for pseudo-differential operators with
  symbols of limited smoothness and commutators}, Journal of Functional
  Analysis \textbf{232} (2006), no.~2, 495--539.

\bibitem[Lan13]{Lannes2013}
\bysame, \emph{The water waves problem}, American Mathematical Society, 2013.

\bibitem[MB02]{AJMajda2002}
Andrew~J. Majda and Andrea~L. Bertozzi, \emph{Vorticity and incompressible
  flow}, Cambridge Texts in Applied Mathematics, vol.~27, Cambridge University
  Press, Cambridge, 2002. \MR{1867882}

\bibitem[MG17]{Mesognon17}
Beno\^{i}t M\'{e}sognon-Gireau, \emph{The {C}auchy problem on large time for
  the water waves equations with large topography variations}, Ann. Inst. H.
  Poincar\'{e} C Anal. Non Lin\'{e}aire \textbf{34} (2017), no.~1, 89--118.
  \MR{3592680}

\bibitem[Mil61]{Miles1961}
John~W. Miles, \emph{On the stability of heterogeneous shear flows}, Journal of
  Fluid Mechanics \textbf{10} (1961), no.~04, 496.

\bibitem[MR17]{MasmoudiRousset2012}
Nader Masmoudi and Frederic Rousset, \emph{Uniform regularity and vanishing
  viscosity limit for the free surface {N}avier-{S}tokes equations}, Arch.
  Ration. Mech. Anal. \textbf{223} (2017), no.~1, 301--417. \MR{3590375}

\bibitem[MW12]{Masmoudi2012}
Nader Masmoudi and Tak~Kwong Wong, \emph{On the {$H^s$} theory of hydrostatic
  euler equations}, Archive for Rational Mechanics and Analysis \textbf{204}
  (2012), no.~1, 231--271.

\bibitem[Ren09]{Renardy2009}
Michael Renardy, \emph{Ill-posedness of the hydrostatic euler and
  navier–stokes equations}, Archive for Rational Mechanics and Analysis
  \textbf{194} (2009), no.~3, 877--886.

\bibitem[Tay11]{Taylor2011}
Michael~Eugene Taylor, \emph{Partial differential equations}, Springer, 2011.

\bibitem[Val17]{Vallis2017}
Geoffrey~K. Vallis, \emph{Atmospheric and oceanic fluid dynamics}, Cambridge
  University Press, jun 2017.

\end{thebibliography}
\end{document}